\theoremstyle{remark}
\newtheorem{para}{\bf}[section]
\theoremstyle{definition}
\theoremstyle{plain}
\newtheorem{thm}[para]{Theorem}
\newtheorem{lemma}[para]{Lemma}
\newtheorem{cor}[para]{Corollary}
\newtheorem{prop}[para]{Proposition}
\newenvironment{numequation}
{\addtocounter{para}{1}\begin{equation}}{\end{equation}}
\newcommand{\al}{{\alpha}}
\newcommand{\bbC}{{\mathbb C}}
\newcommand{\bbF}{{\mathbb F}}
\newcommand{\bbN}{{\mathbb N}}
\newcommand{\bbQ}{{\mathbb Q}}
\newcommand{\bbR}{{\mathbb R}}
\newcommand{\bbZ}{{\mathbb Z}}
\newcommand{\bB}{{\bf B}}
\newcommand{\bG}{{\bf G}}
\newcommand{\bN}{{\bf N}}
\newcommand{\bT}{{\bf T}}
\newcommand{\bU}{{\bf U}}
\newcommand{\bb}{{\bf b}}
\newcommand{\bc}{{\bf c}}
\newcommand{\fra}{{\mathfrak a}}
\newcommand{\frg}{{\mathfrak g}}
\newcommand{\frh}{{\mathfrak h}}
\newcommand{\frn}{{\mathfrak n}}
\newcommand{\frt}{{\mathfrak t}}
\newcommand{\frx}{{\mathfrak x}}
\newcommand{\fry}{{\mathfrak y}}
\newcommand{\frY}{{\mathfrak Y}}
\newcommand{\cA}{{\mathcal A}}
\newcommand{\cB}{{\mathcal B}}
\newcommand{\cD}{{\mathcal D}}
\newcommand{\cF}{{\mathcal F}}
\newcommand{\cL}{{\mathcal L}}
\newcommand{\cM}{{\mathcal M}}
\newcommand{\cN}{{\mathcal N}}
\newcommand{\cO}{{\mathcal O}}
\newcommand{\cR}{{\mathcal R}}
\newcommand{\cS}{{\mathcal S}}
\newcommand{\cT}{{\mathcal T}}
\newcommand{\cU}{{\mathcal U}}
\newcommand{\sC}{{\mathscr C}}
\newcommand{\sR}{{\mathscr R}}
\newcommand{\tG}{\tilde{G}}
\newcommand{\Aut}{{\rm Aut}}
\newcommand{\Hom}{{\rm Hom}}
\newcommand{\lra}{\longrightarrow}
\newcommand{\ra}{\rightarrow}
\newcommand{\Gpm}{G^{p^m}}
\newcommand{\Gzpm}{G_0^{p^m}}
\newcommand{\rn}{r_0}
\newcommand{\gr}{gr^\bullet_r\,}
\newcommand{\R}{o_L}
\newcommand{\car}{\stackrel{\cong}{\longrightarrow}}
\DeclareMathOperator*{\limi}{\lim}
\newcommand{\hUgm}{\widehat{U(\frg)_n}}
\newcommand{\hUtm}{\widehat{U(\frt)_n}}
\newcommand{\Ugm}{U(\frg)_n}
\newcommand{\Utm}{U(\frt)_n}
\newcommand{\Uinvm}{(U(\frg)^\bG)_n}
\newcommand{\Ulm}{\cU^{\lambda}_n}\newcommand{\Um}{\cU_n}
\newcommand{\hUlm}{\widehat{\Ulm}}
\newcommand{\hUlmK}{\widehat{\Ulm}_{,K}}\newcommand{\hUmK}{\widehat{\Um}_{,K}}
\newcommand{\tiX}{\tilde{X}}
\newcommand{\tiD}{\tilde{\cD}}
\newcommand{\tiDm}{\tilde{\cD}_n}
\newcommand{\hDml}{\widehat{\cD_n^\lambda}}\newcommand{\Dml}{\cD_n^\lambda}
\newcommand{\hDmlK}{\widehat{\cD_n^\lambda}_{,K}}
\newcommand{\Bslm}{\cA^\lambda_{m,K}}\newcommand{\Blm}{A^\lambda_{m,K}}\newcommand{\Bslmp}{\cA^\lambda_{m+1,K}}\newcommand{\Blmp}{A^\lambda_{m+1,K}}
\newcommand{\Bsl}{\cA^\lambda_{K}}
\newcommand{\teta}{\theta}
\begin{document}

\title{On locally analytic
Beilinson-Bernstein localization and the canonical dimension}
\author{Tobias Schmidt}
\address{Mathematisches Institut, Westf\"alische Wilhelms-Universit\"at M\"unster, Einsteinstr.
62, D-48149 M\"unster, Germany}
\email{toschmid@math.uni-muenster.de}
\date{}
\maketitle

\scriptsize
\normalsize

\normalsize

\begin{abstract}
Let $\bG$ be a connected split reductive group over a $p$-adic
field. In the first part of the paper we prove, under certain
assumptions on $\bG$ and the prime $p$, a localization theorem of
Beilinson-Bernstein type for admissible locally analytic
representations of principal congruence subgroups 
in the rational points of $\bG$.
In doing so we take up and extend some recent methods and results
of Ardakov-Wadsley on completed universal enveloping
algebras (\cite{AW}) to a locally analytic setting. 
As an application we prove, in the second part of the paper, a
locally analytic version of Smith's theorem on the canonical
dimension. 

~\\ MSC2000: 11F85, 22E50, 16S30.

\end{abstract}

\tableofcontents

\vskip30pt

\section{Introduction}



Let $L/\bbQ_p$ be a finite field extension with ramification index
$e$. Let $\bG$ be a connected split reductive algebraic group over
$L$ with group of rational points $\bG(L)$. In a series of papers
(\cite{ST01a},\cite{ST4},\cite{ST5},\cite{ST6})
Schneider-Teitelbaum have developed a theory of admissible locally
analytic representations of the $p$-adic Lie group $\bG(L)$ in
$p$-adic locally convex vector spaces. Originally constructed for
the reductive group side in the emerging $p$-adic local Langlands
programme (\cite{SchneiderICM}) this theory has found diverse
other arithmetic applications, for example in the field of
$p$-adic automorphic forms and $p$-adic interpolation
(\cite{EmertonO},\cite{Loeffler}). Motivated by the theory of
Harish-Chandra modules for real reductive Lie groups one may ask
whether or not a good localization theory of Beilinson-Bernstein
type exists for such representations. We recall that the classical
Beilinson-Bernstein localization theorem (\cite{BB81}) asserts an
equivalence of categories between representations of a complex
reductive Lie algebra with fixed central character and a category
of twisted $D$-modules on the flag variety. Among other
applications - notably the proof of the Kazhdan-Lusztig
multiplicity conjecture - this theorem can be used to obtain a
classification of the irreducible admissible smooth
representations of a given real reductive Lie group.

\vskip8pt


In the first part of this paper we make a modest step in this
direction and prove a localization theorem for representations of
a prominent series of compact open subgroups of $\bG(L)$, namely
the principal congruence subgroups. In doing so we take up and
extend some recent methods and results of Ardakov-Wadsley on
completed universal enveloping algebras (\cite{AW}) to a locally
analytic setting. To give more details, we fix an extension of
$\bG$ to a split reductive group scheme over $o_L$ and denote it
by the same letter. Our results are valid under three hypotheses
on the geometric closed fibre $\bG_{\bar{s}}$ of $\bG$ which are
familiar from the theory of modular Lie algebras (cf.
\cite{JantzenCharp}): the derived group of $\bG_{\bar{s}}$ is
(semisimple) simply connected, the prime $p$ is good for the
modular Lie algebra $Lie(\bG_{\bar{s}})$ and there exists a
$\bG_{\bar{s}}$-invariant non-degenerate bilinear form on
$Lie(\bG_{\bar{s}})$. A prominent example satisfying these
conditions for all primes $p$ is the general linear group.
Moreover, any almost simple and simply connected $\bG_{\bar{s}}$
satisfies these conditions whenever $p\geq 7$ (and assuming
additionally that $p$ does not divide $n+1$ in case
$\bG_{\bar{s}}$ is of type $A_n$). Assuming these hypotheses in
the following, let $\frg$ be the $o_L$-Lie algebra of the group
scheme $\bG$. For any $k\geq 1$ we have the $k$-th principal
congruence subgroup
\[ G:=\ker\; ( \bG(o_L)\longrightarrow
\bG(o_L/\pi_L^{k}o_L) ).\] We fix a $p$-adic coefficient field
$K$, a finite extension of $\bbQ_p$. Let $U(\frg_K)$ be the
universal enveloping algebra of $\frg_K=\frg\otimes_{\bbZ_p} K$
and let $\theta$ be a central character of $U(\frg_K)$. Let
$Rep(G)_\theta$ be the abelian category of admissible locally
analytic $G$-representations over $K$ having infinitesimal
character $\theta$. On the other hand, let $\bT$ be a maximal
split torus of $\bG$ and $\bB\subset\bG$ a Borel subgroup scheme
containing it. In accordance with the classical situation our
localizations will live on the flag scheme
$$X:=\bG/\bB$$ of $\bG$. Let
$\frt\subset\frg$ be the Lie algebra of $\bT$. Up to a finite
extension of $K$ we may pick a weight $\lambda\in\frt^*_K$ that
maps to $\theta$ under the classical (untwisted) Harish-Chandra
mapping. Let $\rho$ be half the sum over the positive roots.
Suppose $\lambda+\rho$ is dominant and regular. In this situations
our main result is an equivalence of categories
$$\sC_{\cA_K^\lambda}\car Rep(G)_\theta$$
where $\cA^\lambda_K$ is a certain sheaf of noncommutative
$K$-algebras on $X$ and $\sC_{\cA_K^\lambda}$ is an explicitly
given full abelian subcategory of all (left)
$\cA_K^\lambda$-modules. The latter contains, for example, all
coherent modules. The functor is given essentially by the global
section functor. A quasi-inverse can be made explicit using the
central reduction
$$D(G)_\theta:=D(G)\otimes_{Z(\frg_K),\theta} K$$ of
the locally analytic distribution algebra $D(G)$ of $G$. We remark
here that the category $Rep(G)_\theta$ is in natural duality with
a full subcategory of $D(G)_\theta$-modules, the so-called {\it
coadmissible} modules (\cite{ST5}). The sheaf $\cA^\lambda_K$ is a
natural ring extension of a suitable Fr\'echet completion, so to
speak, of the classical $\lambda$-twisted differential operators
$\cD^\lambda_K$ on the generic fibre $X_L$. In fact, there will be
a canonical morphism $j_*\cD^\lambda_K\rightarrow\cA^\lambda_K$
inducing a commutative diagram

\[\xymatrix{
U(\frg_K)_\theta\ar[d]^{\subset}\ar[r]^<<<<{\simeq} &  \Gamma(X_L,\cD^\lambda_K)\ar[d]\\
D(G)_\theta \ar[r]^<<<<{\simeq}  & \Gamma(X,\cA^\lambda_K) }
\]
where $j: X_L\hookrightarrow X$ equals the inclusion of the
generic fibre into $X$ and the upper isomorphism comes from
\cite{BB81}.

\vskip8pt

As in the case of real Lie groups, the above localization theorem
can be used to obtain significant information on the irreducible
objects in the category $Rep(G)_\theta$. We hope to come back to
this in the future.



\vskip8pt

To sketch the construction of the sheaf $\cA^\lambda_K$ we let
$\bN$ be the unipotent radical of $\bB$ and put
$\tilde{X}:=\bG/\bN$. Since $\bT$ normalizes $\bN$ the canonical
projection
$$\xi:\tilde{X}\lra X$$ is a right $\bT$-torsor for the Zariski topology on $X$. There are sheaves $\cD_{\tilde{X}}$
and $\cD_X$ of crystalline (i.e. no divided powers) differential
operators on $\tilde{X}$ and $X$ respectively, familiar from the
theory of arithmetic $D$-modules
(\cite{BerthelotDI},\cite{BerthelotOgus}). We denote by
$$\tilde{\cD}:=\xi_*(\cD_{\tilde{X}})^\bT$$ the relative universal
enveloping algebra of the torsor $\xi$ in the sense of
Borho-Brylinski (\cite{BBII}). Recall that if $U\subset X$ is an
open affine subset trivializing $\xi$ then a choice of such a
trivialization induces an isomorphism
$$\tilde{\cD}(U)\car \cD_X(U) \otimes_{\bbZ_p} U(\frt)$$
for the local sections of $\tilde{\cD}$ above $U$. The
homomorphisms $U(\frt)\ra\tilde{\cD}(U)$ for varying $U$ glue to a
central embedding $U(\frt)\lra\tilde{\cD}$ so that toral weights
give rise to central reductions of $\tilde{\cD}$.

 \vskip8pt

Let $\pi_K$ be a uniformizer of $K$ and $o_K\subset K$ the ring of
integers. In the following we restrict to numbers $m>>0$ such that
$\lambda(\pi_L^m\frt)\subseteq o_K$. For any such number $m$ we
put $n:=(m-1)e+k$ and let $\widehat{\cD_n^\lambda}_{,K}$ be the
central reduction along $\lambda$ of the $p$-adic completion (with
$\pi_K$-inverted) of the $n$-th deformation of the sheaf
$\tilde{\cD}$ as introduced by Ardakov-Wadsley (\cite{AW}). Over
an open affine subset $U\subset X$ trivializing $\xi$ its algebra
of sections is noncanonically isomorphic to a certain $K$-Banach
algebra completion of $\tilde{\cD}_X(U)\otimes_{o_L} K.$ The
Banach norm in question depends on the 'deformation parameter'
$n$. We remark in passing that, in loc.cit., the sheaves
$\widehat{\cD_n^\lambda}_{,K}$ are used to establish a
Beilinson-Bernstein theorem for $p$-adic completions of the
universal enveloping algebra $U(\frg)$. We make heavy use of this
result. In this light our methods and results are simply a direct
and straightforward extension of the corresponding ones in
loc.cit.

\vskip8pt

To go further, the natural $\bG$-equivariant structure of $\cD_X$
extends to $\widehat{\cD_n^\lambda}_{,K}$. Since the latter sheaf
is supported only on the special fibre of $X$ and the latter is
set-theoretically fixed by $G$ we have a group homomorphism
$\sigma: G\ra {\rm Aut}(\widehat{\cD_n^\lambda}_{,K}).$

\vskip8pt

To simplify the exposition in this introduction we assume from now
on $k\geq e$. We then denote by $H_m$ the finite group equal to
the quotient of $G$
modulo its normal subgroup generated by $p^m$-th powers. 
A careful choice of a section $H_m\ra G$ combined with results of
M. Lazard on $p$-valued groups (\cite{Lazard65}) produces from
$\sigma$ a {\it homomorphism}
$$ \sigma_m: H_m\lra {\rm Out}(\widehat{\cD_n^\lambda}_{,K})$$
into the group of outer automorphisms of the sheaf
$\widehat{\cD_n^\lambda}_{,K}$ as well as a $2$-cocycle $$\tau_m:
H_m\times H_m\lra \Gamma(X, \widehat{\cD_n^\lambda}_{,K})^\times$$
with respect to $\sigma_m$. Sheafifying the usual construction
from noncommutative ring theory (\cite{Passman}) we obtain a
crossed product sheaf
$$ \cA^\lambda_{m,K}:=\widehat{\cD_n^\lambda}_{,K}*_{\sigma_m,\tau_m}
H_m$$ on $X$ for any $m>>0$. The latter is a sheaf of
noncommutative associative $K$-algebras naturally containing
$\widehat{\cD_n^\lambda}_{,K}$ as a subsheaf. The sheaves
$\cA^\lambda_{m,K}$ form a projective system with well-behaved
transition maps and the limit
$$\cA_K^\lambda:=\varprojlim_m \cA^\lambda_{m,K}$$ is our promised
sheaf. All sheaves $\cA^\lambda_{m,K}$ are coherent and the
abelian category $\sC_{\cA_K^\lambda}$ arises as a suitable
projective limit construction involving the categories of coherent
$\cA^\lambda_{m,K}$-modules for all $m$. In this situation the
nature of the inclusion ${\rm coh}(\cA^\lambda_K)\subset
\sC_{\cA_K^\lambda}$ measures, so to speak, the failure of the
limit sheaf $\cA^\lambda_K$ to be coherent. For more details we
refer to the main body of this article.

\vskip30pt

In the second part of this paper we apply the methods of the first
part to prove an analogue of a classical theorem of S.P. Smith on
complex universal enveloping algebras (\cite{Smith}) in our
locally analytic context. Let $L/\bbQ_p$ be an arbitrary finite
extension and $G$ an arbitrary Lie group over $L$. The theory of
admissible locally analytic $G$-representations provides a unified
framework for studying finite-dimensional algebraic
representations (if $G$ comes from an algebraic group) as well as
the admissible-smooth representations of Langlands theory. Apart
from the latter two classes there are many representations which
are~{\it genuinely locally analytic}. As a first coarse way to
organize this situation and, more specifically, to meisure the
'size' of the vector space underlying a representation (which
usually is of infinite vector space dimension) one may introduce
an Auslander-Gorenstein style dimension function on the dual
category $\sC$ of coadmissible $D(G)$-modules (cf.
\cite{ST5}\footnote{In [loc.cit.] the authors consider the
associated {\it codimension} function.}). It associates a
well-defined number $0\leq {\rm dim} (M)\leq d$ to each nonzero
coadmissible module $M$ where $d$ equals the dimension of the Lie
group $G$. In this way one obtains a filtration by Serre
subcategories

\[\sC=\sC_{d}\supseteq\sC_{d-1}\supseteq
...\supseteq\sC_{1}\supseteq\sC_{0}\] where $M$ lies in $\sC_{i}$
if and only if ${\rm dim}(M)\leq i$. The modules corresponding to
the aforementioned algebraic representations and the
smooth-admissible representations are concentrated in dimension
zero.

\vskip8pt

Here, we investigate this situation under the additional
assumption that the Lie algebra $Lie(G)$ equals the Lie algebra of
a split reductive group $\bG$ satisfying our three hypotheses
above. It turns out that, in this case, there is a large 'gap' in
the above filtration. Namely, let $\bG'_{\bbC}$ be the complex
derived group of $\bG$ and let $r$ be half the smallest possible
dimension of a non-zero co-adjoint $\bG'_{\bbC}$-orbit. The value
of $r$ depends only on the root system of $\bG'_{\bbC}$ and is
well-known in all cases according to work of A. Joseph
(\cite{Joseph},\cite{JosephMINORBIT}). Our main result says that
if $M$ is a coadmissible $D(G)$-module which is not
zero-dimensional, then ${\rm dim}(M)\geq r$. Moreover, we show
that a coadmissible module is zero-dimensional if and only if its
associated coherent sheaf (in the sense of \cite{ST5}) consists of
finite-dimensional $K$-vector spaces.

\vskip8pt

In \cite{AW} Ardakov-Wadsley prove a version of Smith's theorem
for $p$-adically completed universal enveloping algebras and our
version had its origin in the attempt to generalize their result
to locally analytic representations. The results on locally
analytic distribution algebras as obtained in \cite{SchmidtAUS}
and \cite{SchmidtVECT} enable us to make a rather straightforward
reduction to the case treated in \cite{AW}.

\vskip8pt

The author thanks K. Ardakov for kindly answering some questions
concerning the work \cite{AW} and for his comments on an earlier
version of this article.



\vskip8pt




\section {Preliminaries on crossed products and sheaves}\label{sect-crossed}
\renewcommand{\cA}{\mathcal{B}}
\renewcommand{\cB}{\mathcal{A}}

All appearing rings in this section are unital.

\begin{para}\label{para-ACT}
Recall (\cite{Passman}) that a (associative) {\it crossed product}
of a ring $R$ by a group $H$ is an associative ring $R*H$ which
contains $R$ as a subring and contains a set of units
$\overline{H}=\{\overline{h}: h\in H\}$, isomorphic as a set to
$H$, such that
\begin{itemize}
    \item[(a)] $R*H$ is a free left $R$-module with
    basis $\overline{H}$;

    \item[(b)] for all $x,y\in H, \overline{x}R=R\overline{x}$ and
    $\overline{x}\cdot\overline{y}R=\overline{xy}R$.
\end{itemize}
Given such a crossed product one obtains maps $\sigma:
H\rightarrow \Aut(R)$ (an {\it action}) and $\tau:H\times
H\rightarrow R^\times$ (a {\it twisting}) by the rules
\[\begin{array}{ccccc}
  \sigma(x)(r)=\overline{x}^{-1}r\overline{x} &  & {\rm and} & &\overline{x}\cdot\overline{y}=\overline{xy}\tau(x,y). \\
\end{array}\]
It follows that $\sigma$ defines a group homomorphism
$H\rightarrow {\rm Out}(R)$ and that $\tau$ is a $2$-cocycle for
the action of $H$ on $R^\times$ via $\sigma$.\footnote{Note that
we implicitly use the convention of loc.cit. for the
multiplication in ${\rm Aut}(R)$ (and therefore also in ${\rm
Out}(R)$): $(\alpha\beta)(r):=\beta (\alpha(r))$ for $r\in R$ and
two automorphisms $\alpha,\beta$.} Conversely, starting with a
ring $R$, a group $H$, a group homomorphism $\sigma: H\rightarrow
{\rm Out}(R)$ and a $2$-cocycle $\tau:H\times H\rightarrow
R^\times$ one can construct an associative ring
$R*_{\sigma,\tau}H$ which is a crossed product of $R$ by $H$
having the prescribed action and twisting (\cite{Passman}, Lemma
1.1).

\vskip8pt

Given a crossed product $S=R*H$ we can and will always assume that
$\overline{1}$ equals the unit element in $S$ (loc.cit., \S1) and
that the inclusion $R\hookrightarrow S$ is unital. Moreover, if
$R$ is left (right) noetherian and $H$ is finite, then $S$ is left
(right) noetherian (loc.cit., Prop. 1.6).

\vskip8pt

Let from now on $K$ be a field of characteristic zero and $H$ a
finite group. Let $S=R*H$ be a crossed product. We assume that
$R,S$ are $K$-algebras and that $R\hookrightarrow S$ is a
$K$-algebra homomorphism. The following lemma is due to K.
Ardakov. I thank him for allowing me to reproduce it here.
\begin{lemma}\label{Ardakov} Let $R\ra A$ be a ring homomorphism which is (left)
flat and which factores through the inclusion $R\hookrightarrow
S$. The resulting ring homomorphism $S\ra A$ is (left) flat.
\end{lemma}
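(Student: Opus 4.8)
The strategy is to use that, since $K$ has characteristic zero and $H$ is finite, the order $n:=|H|$ is invertible in $K$ and hence in $R$, $S$ and $A$; this lets us deduce flatness over $S$ from flatness over $R$ by a Maschke‑type separability argument. Write $\iota\colon S\to A$ for the structure morphism, so that $\iota$ restricts on $R$ to the given flat map $R\to A$; recall that ``$S\to A$ is left flat'' means that $A$ is a flat left $S$-module.

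First I would recall that $S=R*H$ is a separable extension of $R$: the element
\[ e:=\frac{1}{n}\sum_{h\in H}\overline{h}^{-1}\otimes_R\overline{h}\ \in\ S\otimes_R S \]
satisfies $\mu(e)=1$, where $\mu\colon S\otimes_R S\to S$ is multiplication, and $se=es$ for all $s\in S$, where $S\otimes_R S$ carries the outer $S$-bimodule structure $s\cdot(x\otimes y)\cdot s'=sx\otimes ys'$. To check $se=es$ it is enough to take $s\in R$ and $s=\overline{x}$ for $x\in H$; in each case this is a direct computation from the defining relations $\overline{x}R=R\overline{x}$ and $\overline{x}\cdot\overline{y}=\overline{xy}\tau(x,y)$ of the crossed product together with a reindexing of the sum over $H$. (One could equally just invoke the standard fact that a crossed product of $R$ by a finite group whose order is invertible in $R$ is a separable $R$-ring.)

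Next I would split $A$ off from $S\otimes_R A$ inside the category of left $S$-modules. The multiplication map $\mu_A\colon S\otimes_R A\to A$, $s\otimes a\mapsto\iota(s)a$, is a surjective morphism of left $S$-modules. Writing $e=\sum_i e_i'\otimes e_i''$, the rule $a\mapsto\sum_i e_i'\otimes\iota(e_i'')a$ defines a map $\rho_A\colon A\to S\otimes_R A$; applying the $R$-balanced map $x\otimes y\mapsto x\otimes\iota(y)a$ to both sides of the identity $se=es$ shows that $\rho_A$ is left $S$-linear, while $\mu_A\circ\rho_A=\mathrm{id}_A$ follows from $\sum_i e_i'e_i''=\mu(e)=1$. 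Hence $A$ is a direct summand of $S\otimes_R A$ as a left $S$-module.

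Finally, $S\otimes_R A$ is a flat left $S$-module: for every right $S$-module $N$ there is a natural isomorphism $N\otimes_S(S\otimes_R A)\cong N\otimes_R A$, and $N\mapsto N\otimes_R A$ is the composition of the exact restriction functor from right $S$-modules to right $R$-modules with the functor $-\otimes_R A$, which is exact because $A$ is flat over $R$. Since a direct summand of a flat module is flat, $A$ is a flat left $S$-module, which is the assertion. The one step that requires genuine work is checking that $e$ is a separability idempotent, i.e. the identity $se=es$; once that is in hand, everything else is formal.
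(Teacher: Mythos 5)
Your argument is correct, but it runs along a genuinely different track than the paper's. You prove that $S=R*H$ is a separable extension of $R$ via the explicit averaging element $e=\frac{1}{|H|}\sum_{h}\overline{h}^{-1}\otimes_R\overline{h}$ (your verification of $\mu(e)=1$ and $se=es$ on the generators $R\cup\overline{H}$ is the only nontrivial computation, and it goes through exactly as you indicate, using $\overline{x}R=R\overline{x}$ and the invertibility of the $\overline{h}$), and you then split the \emph{coefficient module} $A$ off from the induced module $S\otimes_R A$ inside left $S$-modules; flatness of $S\otimes_R A$ over $S$ follows from flatness of $A$ over $R$ by the base-change isomorphism $N\otimes_S(S\otimes_R A)\cong N\otimes_R A$, and a direct summand of a flat module is flat. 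The paper instead works on the \emph{test module} side: for an arbitrary right $S$-module $M$ it shows ${\rm Tor}_1^{S}(M,A)=0$ by identifying $M\otimes_R S$ with $M\otimes_K K[H]$ (via the comodule/Galois structure $\Delta:S\to S\otimes_K K[H]$ and the cited results on $K[H]$-Galois extensions), splitting $M$ off from $M\otimes_K K[H]$ using semisimplicity of $K[H]$, and then invoking ${\rm Tor}_1^{S}(M\otimes_R S,A)\cong{\rm Tor}_1^{R}(M,A)=0$, which uses the (free, hence flat) extension $R\to S$. Both arguments are avatars of the same underlying fact — $S$ is separable over $R$ because $|H|$ is invertible in $K$ — but yours is self-contained and elementary (no Hopf–Galois input, no Tor bookkeeping, and it does not even use flatness of $R\to S$), while the paper's route is shorter on computation because it outsources the Maschke-type splitting to the cited results on crossed products as $K[H]$-Galois extensions.
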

\begin{proof}
Let $M$ be a right $S$-module. It suffices to show ${\rm
Tor}_1^{S}(M,A)=0$. Let $K[H]$ be the usual group algebra. There
is a ring homomorphism $\Delta: S\ra S \otimes_K K[H]$ induced by
$\bar{h}\mapsto \bar{h}\otimes h$. The tensor product $M\otimes_K
K[H]$ is a right $S$-module via $\Delta$. Now $\Delta$ exhibits
$R\hookrightarrow S$ as a (right) $K[H]$-Galois extension (e.g.
\cite{AWCartanMap}, 2.2). According to loc.cit., Prop. 2.3 (a) the
$S$-module $M\otimes_K K[H]$ is therefore isomorphic to
$M\otimes_R S$. On the other hand, the semisimplicity of $K[H]$
together with loc.cit., Lemma 2.4 implies that $M$ is an
$S$-module direct summand of $M\otimes_K K[H].$ If $N$ is a
complementary submodule we therefore obtain
$$ {\rm Tor}_1^{S}(M, A) \oplus {\rm Tor}_1^{S}(N,A) = {\rm Tor}_1^{S}( M
\otimes_K K[H], A) \simeq {\rm Tor}_1^{S}(M\otimes_R S, A) = {\rm
Tor}_1^R(M,A)=0$$ where the last two identities follow from
flatness of $R\ra S$ and $R\ra A$ respectively.
\end{proof}

\end{para}

\begin{para}
Let $X$ be a topological space and $\cA$ be a sheaf of not
necessarily commutative rings on $X$. Let $H$ be a finite group.
There is an obvious notion of (associative) crossed product of the
sheaf $\cA$ by $H$ by which we mean a sheaf $\cA*H$ of associative
rings on $X$ which contains $\cA$ as a subsheaf and has a
distinguished set of global units
$$\overline{H}=\{\overline{h}: h\in H\}\subseteq
\Gamma(X,\cA*H)^\times,$$ isomorphic as a set to $H$, such that
\begin{itemize}
    \item[(a)] $\cA*H$ is a free left $\cA$-module with
    basis $\overline{H}$ (i.e. the natural map
    $\oplus_{|H|}\cA\ra\cA*H$ given by the global sections
    $\overline{h}$ is an isomorphism of $\cA$-modules),

    \item[(b)] for all $x,y\in H, \overline{x}\cA=\cA\overline{x}$ and
    $\overline{x}\cdot\overline{y}\cA=\overline{xy}\cA$.
\end{itemize}
Given such a crossed product one obtains maps $\sigma:
H\rightarrow \Aut(\cA)$ and $\tau:H\times H\rightarrow
\Gamma(X,\cA)^\times$ as before, i.e. by the rules
$\sigma(x)(s)=\overline{x}^{-1}s\overline{x}$ and
$\overline{x}\cdot\overline{y}=\overline{xy}\tau(x,y)$ for any
local section $s$. It follows that $\sigma$ defines a group
homomorphism $H\rightarrow {\rm Out}(\cA)$ and $\tau$ is a
$2$-cocycle for the action of $H$ on $\cA^\times$ via $\sigma$.
Here, ${\rm Out}(\cA)$ refers to the quotient of ${\rm Aut}(\cA)$
by its normal subgroup of automorphisms arising via conjugation by
a global unit. Furthermore, to match with our convention in the
case of rings, we define here $(\alpha\beta)(s):=\beta(\alpha(s))$
for a local section $s$ of $\cA$ and automorphisms $\alpha,\beta$.

Conversely, starting with a sheaf $\cA$, a group $H$, a group
homomorphism $\sigma: H\rightarrow {\rm Out}(\cA)$ and a
$2$-cocycle $\tau:H\times H\rightarrow \Gamma(X,\cA)^\times$ one
can construct a sheaf of associative rings $\cA*_{\sigma,\tau}H$
which is a crossed product of $\cA$ by $H$ having the prescribed
action and twisting. The construction is a straightforward
sheafification of the usual argument and is left to the reader.

\vskip8pt

Given a crossed product $\cA*H$ we will always assume that
$\overline{1}$ equals the global unit $1\in\Gamma(X,\cA*H)^\times$
and that the inclusion $\cA\hookrightarrow\cA*H$ is unital.
\end{para}
\begin{para}\label{para-COH}
Let again $\cA$ be a sheaf of not necessarily commutative rings on
$X$. Recall (\cite{EGA_I}, 0.5.3.1) that a left $\cA$-module is
{\it coherent} if it is of finite type and if for all open subsets
$U\subset X$ and all morphisms $\varphi: (\cA|_U)^b\ra \cM|_U$ the
kernel of $\varphi$ is again of finite type. The coherent
$\cA$-modules form a full abelian subcategory ${\rm coh}(\cA)$ of
all $\cA$-modules. We say $\cA$ is a {\it sheaf of coherent rings}
if $\cA$ is coherent as a left module over itself. If $\cA$ is a
sheaf of coherent rings, then a $\cA$-module is coherent if and
only if it is of finite presentation (loc.cit., 0.5.3.7).

\end{para}

\begin{para}
We suppose we are now given a projective system of sheaves $\cA_n,
n\geq 0$ of not necessarily
commutative $K$-algebras on $X$. 
We assume the following two local properties hold for the sheaves
$\cA_n$. Any point $x\in X$ has a basis $\cS_x$ of open
neighbourhoods $U\subseteq X$ such that
\begin{itemize}
    \item[(i)] $\Gamma(U,\cA_n)$ is a left
    and right noetherian $K$-Banach algebra,

    \item[(ii)] the transition homomorphism $\Gamma(U,\cA_{n+1})\ra\Gamma(U,\cA_n)$ is continuous and is left and
right flat with dense image.
\end{itemize}
We remark here that a straightforward generalization of
\cite{BGR}, Prop. 1.2.1/2 to the noncommutative setting shows that
the given norm in (i) can always be replaced by an equivalent one
which is submultiplicative.

The union $\cS:=\cup_{x\in X}\cS_x$ is a base for the topology on
$X$.
We let
$$\cA:=\varprojlim_n\cA_n$$ be the projective limit of the system
$(\cA_n)_n$. Note that
$\Gamma(U,\cA)=\varprojlim_n\Gamma(U,\cA_n)$ for all open subsets
$U\subseteq X$ (\cite{EGA_I}, 0.3.2.6). In the following we often
abbreviate $\cF(U):=\Gamma(U,\cF)$ for a sheaf $\cF$ on $X$ and an
open subset $U\subseteq X$.

\end{para}
\begin{para}

Let ${\rm Mod}(\cA)$ be the abelian category of (left)
$\cA$-modules on $X$. Our definitions allow the following simple
sheafification of the formalism of coadmissible modules as
developed in \cite{ST5}, \S3. In particular, we will produce a
certain full abelian subcategory
$${\rm coh}(\cA)\subset \sC_{\cA}\subset {\rm Mod}(\cA).$$ To do this consider the category of projective
systems $(\cM_n)_n$ of coherent
 $\cA_n$-modules $\cM_n$ with the
property that the transition maps induce isomorphisms
$$ \cA_n \otimes_{\cA_{n+1}} \cM_{n+1}\car \cM_n$$
of $\cA_n$-modules.
With the usual notion of morphism these projective systems form a
category ${\rm coh}((\cA_n)_n)$. As a consequence of the flatness
requirement (ii) the base change functor
\begin{numequation}\label{equ-exactbasechange}\cA_n\otimes_{\cA_{n+1}}(\cdot): {\rm Mod}(\cA_{n+1})\ra{\rm
Mod}(\cA_{n})\end{numequation} is exact. In view of the noetherian
hypothesis in (i) the category ${\rm coh}((\cA_n)_n)$ is therefore
abelian. We have an additive and left exact functor
$$\Gamma(\cM_n):=\varprojlim_n \cM_n$$ into ${\rm Mod}(\cA)$.
Borrowing terminology from loc.cit. a $\cA$-module $\cM$ will be
called {\it coadmissible} if it is isomorphic to a module of the
form $\Gamma(\cM_n)$ for some $(\cM_n)_n\in {\rm coh}((\cA_n)_n)$.
We let $\sC_\cA$ be the full subcategory of ${\rm Mod}(\cA)$
consisting of coadmissible modules.
\begin{prop}\label{prop-co}
The functor $\Gamma$ is exact. For any $(\cM_n)_n$ in ${\rm
coh}((\cA_n)_n)$ and $M=\Gamma(\cM_n)$ the natural map
$$ \cA_n\otimes_{\cA}\cM\car \cM_n$$ is an isomorphism for any
$n\geq 0$. For any $U\in\cS$ and for any $n\geq 0$ the ring
homomorphism $\cA(U)\ra\cA_n(U)$ is left and right flat.
\end{prop}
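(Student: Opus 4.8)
The plan is to reduce all three assertions --- each of which is local on $X$ --- to the theory of Fr\'echet--Stein algebras over the members of the base $\cS$. Exactness of $\Gamma$ can be checked on stalks, and the stalk at $x\in X$ is the filtered colimit of the sections over the $U\in\cS_x$; the morphism $\cA_n\otimes_\cA\cM\to\cM_n$ is a morphism of sheaves and so may be tested on $\cS$; and the flatness assertion concerns only the rings $\cA(U)$ and $\cA_n(U)$ for $U\in\cS$. So fix $U\in\cS$.

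First I would recognize $\cA(U)=\varprojlim_n\cA_n(U)$ as a two-sided Fr\'echet--Stein algebra: by (i) each $\cA_n(U)$ is a left and right noetherian $K$-Banach algebra (after passing to an equivalent submultiplicative norm, as remarked above), and by (ii) the maps $\cA_{n+1}(U)\to\cA_n(U)$ are continuous, left and right flat, and of dense image. The left and right flatness of $\cA(U)\to\cA_n(U)$ is then one of the basic structural facts about Fr\'echet--Stein algebras (\cite{ST5}, \S3; cf. \cite{AW}), which settles the third assertion.

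Next, given $(\cM_n)_n\in{\rm coh}((\cA_n)_n)$ and $M=\Gamma(\cM_n)=\varprojlim_n\cM_n$, I would identify $(\cM_n(U))_n$ with a coherent system of $\cA_n(U)$-modules in the sense of loc.cit. This rests on the point that every $U\in\cS$ is ``affine'' for the sheaves $\cA_n$, i.e. that $\Gamma(U,-)$ restricts to an exact equivalence between ${\rm coh}(\cA_n|_U)$ and the finitely generated $\cA_n(U)$-modules; in the geometric situations to which the formalism is applied one may take $U$ to be an affine scheme over which $\cA_n$ is $\cO_X$-quasi-coherent, so that coherent $\cA_n|_U$-modules are in particular quasi-coherent over $\cO_U$ and one may invoke the vanishing of higher cohomology on affines together with the finite-presentation description of coherence from \ref{para-COH}. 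I expect securing this affineness --- a ``Theorem A and B'' for the $\cA_n$ over $\cS$ --- to be the main point. Granting it: the sheaf isomorphisms $\cA_n\otimes_{\cA_{n+1}}\cM_{n+1}\car\cM_n$ yield on sections over $U$ isomorphisms $\cA_n(U)\otimes_{\cA_{n+1}(U)}\cM_{n+1}(U)\car\cM_n(U)$, so $(\cM_n(U))_n$ is a coherent system; since sections commute with projective limits, $M(U)=\varprojlim_n\cM_n(U)$ is the coadmissible $\cA(U)$-module attached to it, and \cite{ST5}, \S3 gives both that $\varprojlim_n$ is exact on coherent systems and that $\cA_n(U)\otimes_{\cA(U)}M(U)\car\cM_n(U)$.

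Finally I would globalize. A short exact sequence in ${\rm coh}((\cA_n)_n)$ is levelwise short exact (the base-change functors $\cA_n\otimes_{\cA_{n+1}}(\cdot)$ being exact), hence --- using the acyclicity from the affineness step --- short exact on sections over each $U\in\cS$, hence still exact after applying $\varprojlim_n$ by the exactness just quoted; passing to the filtered colimit over $U\in\cS_x$ gives exactness on every stalk, so $\Gamma$ is exact. For the second assertion, $\cA_n\otimes_\cA\cM$ is by definition the sheafification of the presheaf $U\mapsto\cA_n(U)\otimes_{\cA(U)}\cM(U)$, which over $U\in\cS$ equals $\cA_n(U)\otimes_{\cA(U)}M(U)\cong\cM_n(U)$ by the previous paragraph; so this presheaf agrees with $\cM_n$ on the base $\cS$, and therefore the canonical morphism $\cA_n\otimes_\cA\cM\to\cM_n$ is an isomorphism of sheaves.
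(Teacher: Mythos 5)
The pivotal step in your argument is the asserted ``affineness'' of each $U\in\cS$ for the sheaves $\cA_n$: that $\Gamma(U,\cdot)$ is an exact equivalence between ${\rm coh}(\cA_n|_U)$ and the finitely generated $\cA_n(U)$-modules, so that in particular $\Gamma\bigl(U,\cA_n\otimes_{\cA_{n+1}}\cM_{n+1}\bigr)=\cA_n(U)\otimes_{\cA_{n+1}(U)}\cM_{n+1}(U)$ and short exact sequences of coherent modules stay exact on sections over $U$. This is a genuine gap. The proposition is stated in the purely abstract setting of this section: $X$ is only a topological space and the $\cA_n$ are sheaves of $K$-algebras subject to the local conditions (i) and (ii); there is no scheme structure, no $\cO_X$-module structure on $\cA_n$, hence no notion of quasi-coherence or of cohomology vanishing on affines to invoke. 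You yourself flag this step as ``the main point'', but you justify it only by features of the intended geometric applications, which are not hypotheses of the statement. Without it, your identification of $(\cM_n(U))_n$ with a coherent system over the Fr\'echet--Stein algebra $\cA(U)$, and your levelwise exactness on sections, are both unestablished; note moreover that in the application such an equivalence for $\hDmlK$ is of Beilinson--Bernstein type and needs the dominance/regularity of $\lambda+\rho$, an assumption the present proposition does not make.

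The paper's proof avoids any such equivalence: it adapts the topological arguments of Theorems A and B and Cor.\ 3.1 of \cite{ST5} directly at the level of sections. Given a coherent $\cM_n$ and a point $x$, one chooses a neighbourhood $U$ over which $\cM_n(U)$ is finitely generated over the noetherian Banach algebra $\cA_n(U)$, hence carries its canonical Banach topology; the density requirement in (ii) then gives that the transition maps $\cM_m(U)\ra\cM_n(U)$ and $M(U)\ra\cM_n(U)$ have dense image, and the Mittag--Leffler/open-mapping machinery of loc.cit.\ yields surjectivity of $M(U)\ra N(U)$ for a surjection of systems together with the base-change isomorphism $\cA_n(U)\otimes_{\cA(U)}M(U)\car\cM_n(U)$; the sheaf statements follow by letting $U$ run through a neighbourhood basis of $x$, and the flatness assertion is obtained as in Remark 3.2 of loc.cit. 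Your treatment of the flatness claim, via the Fr\'echet--Stein structure of $\cA(U)=\varprojlim_n\cA_n(U)$, does match the paper; it is the affineness input underlying your first two claims that must either be added as a hypothesis or replaced by this section-level topological argument.
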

\begin{proof}
This is a straightfoward adaption of the arguments in Theorem A
and B and Cor. 3.1 of loc.cit. as follows. Given $\cM_n$ and a
point $x\in X$ there is an open neighbourhood $U$ of $x$ such that
any $\cM_n(U)$ is a finitely generated module over the noetherian
Banach algebra $\cA_n(U)$. It therefore has a canonical Banach
topology (loc.cit., Prop. 2.1). By the density requirement in (ii)
the projective system $(\cM_n(U))_n$ has the property that
$\cM_{m}(U)\ra\cM_n(U)$ has dense image for all $m>n$. In
particular, the map $M(U)\ra\cM_n(U)$ has dense image where
$M:=\Gamma(\cM_n)$. It now follows that, given a surjection
$(\cM_n)_n\ra (\cN_n)_n$ in ${\rm coh}((\cA_n)_n)$ the map
$M(U)\rightarrow N(U)$ is surjective and
$$ \cA_n(U)\otimes_{\cA(U)}\cM(U)\car \cM_n(U)$$ is an
isomorphism. Letting $U$ run through a neighbourhood basis for $x$
gives the first two claims. The last assertion follows similarly
(cf. loc.cit., Remark 3.2).
\end{proof}
\begin{cor}
The functor $$\Gamma: {\rm coh}((\cA_n)_n)\car\sC_\cA$$ is an
equivalence of categories. \end{cor}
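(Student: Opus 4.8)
The plan is to produce an explicit quasi-inverse to $\Gamma$. Essential surjectivity is immediate from the definition of $\sC_\cA$: every coadmissible module is by construction of the form $\Gamma(\cM_n)$. So the real task is to define a functor $\Phi\colon\sC_\cA\ra {\rm coh}((\cA_n)_n)$ and to exhibit natural isomorphisms $\Phi\circ\Gamma\cong{\rm id}$ and $\Gamma\circ\Phi\cong{\rm id}$; an equivalence then follows formally. The candidate is the obvious one suggested by Proposition \ref{prop-co}: for a coadmissible $\cA$-module $M$ set $\Phi(M):=(\cA_n\otimes_\cA M)_n$, where $\cA_n\otimes_\cA M$ denotes the sheaf associated to the presheaf $U\mapsto\cA_n(U)\otimes_{\cA(U)}M(U)$ on $X$, with the evident transition morphisms, which are the canonical identifications $\cA_n\otimes_{\cA_{n+1}}(\cA_{n+1}\otimes_\cA M)\cong\cA_n\otimes_\cA M$ coming from associativity of tensor product and the unit isomorphism.

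First I would check that $\Phi$ really lands in ${\rm coh}((\cA_n)_n)$. Writing $M=\Gamma(\cM_n)$ for some object $(\cM_n)_n$, Proposition \ref{prop-co} supplies, for each $n$, a natural isomorphism $\cA_n\otimes_\cA M\car\cM_n$ of $\cA_n$-modules; here one first verifies the statement on the base $\cS$ (over $U\in\cS$ the map $\cA_n(U)\otimes_{\cA(U)}M(U)\ra\cM_n(U)$ is an isomorphism by loc.cit., and $\cS$ generates the topology, so in particular the presheaf tensor product is already a sheaf on $\cS$ and agrees with $\cM_n$). Consequently each $\cA_n\otimes_\cA M$ is coherent by hypothesis (i), and under these isomorphisms the transition morphisms of $\Phi(M)$ correspond to the transition morphisms of $(\cM_n)_n$, which by assumption induce isomorphisms $\cA_n\otimes_{\cA_{n+1}}\cM_{n+1}\car\cM_n$. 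Hence $\Phi(M)\in{\rm coh}((\cA_n)_n)$. Moreover the isomorphism $\cA_n\otimes_\cA\Gamma(\cM_n)\car\cM_n$ is natural in $(\cM_n)_n$ — it is $\cA_n\otimes_\cA(-)$ applied to the natural map $\Gamma(\cM_n)\ra\cM_n$ — and compatible with transition maps, so it assembles into a natural isomorphism $\Phi\circ\Gamma\cong{\rm id}_{{\rm coh}((\cA_n)_n)}$.

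For the other composite, let $M=\Gamma(\cM_n)$ be coadmissible. Applying $\varprojlim_n$ to the isomorphisms $\cA_n\otimes_\cA M\car\cM_n$ of Proposition \ref{prop-co} yields
$$\Gamma(\Phi(M))=\varprojlim_n(\cA_n\otimes_\cA M)\ \car\ \varprojlim_n\cM_n=\Gamma(\cM_n)=M,$$
and these isomorphisms are natural in $M$, giving $\Gamma\circ\Phi\cong{\rm id}_{\sC_\cA}$. Thus $\Gamma$ and $\Phi$ are mutually quasi-inverse and the Corollary follows. The genuinely substantive inputs — exactness of $\Gamma$, the isomorphism $\cA_n\otimes_\cA M\car\cM_n$, and flatness of $\cA(U)\ra\cA_n(U)$ — are already packaged in Proposition \ref{prop-co}, modelled on Theorems A, B and Cor. 3.1 of \cite{ST5}; what remains above is bookkeeping. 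I expect the only mild obstacle to be the sheaf-theoretic care needed at the start of the second paragraph: namely that the presheaf $U\mapsto\cA_n(U)\otimes_{\cA(U)}M(U)$ loses no information upon restriction to $\cS$ and sheafification, so that the identifications with the $\cM_n$ are genuinely isomorphisms of sheaves and are compatible with all transition maps and with morphisms of projective systems.
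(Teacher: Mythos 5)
Your proposal is correct and is in substance the same argument as the paper's: the paper disposes of the corollary by noting that $\Gamma$ is essentially surjective by definition of $\sC_\cA$ and fully faithful by Proposition \ref{prop-co}, and your explicit quasi-inverse $M\mapsto(\cA_n\otimes_\cA M)_n$ is just the unpacked form of that full-faithfulness statement, resting on exactly the same inputs (the isomorphisms $\cA_n\otimes_\cA\Gamma(\cM_n)\car\cM_n$ and exactness from Proposition \ref{prop-co}). The sheaf-theoretic care you flag (checking on the base $\cS$ and sheafifying the presheaf tensor product) is consistent with how Proposition \ref{prop-co} itself is proved, so nothing further is needed.
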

\begin{proof}
By definition the functor $\Gamma: {\rm coh}((\cA_n)_n)\ra\sC_\cA$
is essentially surjective. According to the preceding proposition
it is fully faithful.
\end{proof}
\begin{prop}
Any $\cA$-module of finite presentation is coadmissible. In
particular, ${\rm coh}(\cA)\subset \sC_\cA$.
\end{prop}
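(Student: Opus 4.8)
The plan is to exhibit, for any $\cA$-module $\cM$ of finite presentation, an explicit object of ${\rm coh}((\cA_n)_n)$ whose image under $\Gamma$ recovers $\cM$. First I would dispose of the ``in particular'' clause: a coherent $\cA$-module is automatically of finite presentation (locally one has a surjection from a free module of finite rank, and its kernel is of finite type by coherence, cf. \ref{para-COH}), so it suffices to prove that finite presentation implies coadmissibility.

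Given such a $\cM$, I would set $\cM_n:=\cA_n\otimes_\cA\cM$. Since each $\cA_n$ is a sheaf of coherent rings -- this follows from the noetherian Banach hypothesis (i) by the standard argument -- and base change preserves finite presentation, each $\cM_n$ is a coherent $\cA_n$-module; transitivity of base change provides the required transition isomorphisms $\cA_n\otimes_{\cA_{n+1}}\cM_{n+1}\car\cM_n$. Thus $(\cM_n)_n\in{\rm coh}((\cA_n)_n)$, so $\varprojlim_n\cM_n=\Gamma((\cM_n)_n)$ is coadmissible, and the canonical maps $\cM\ra\cM_n$ assemble into a comparison morphism $\iota:\cM\ra\varprojlim_n\cM_n$. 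Everything then comes down to showing that $\iota$ is an isomorphism.

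To prove this I would argue locally on the base $\cS$. Fix $U\in\cS$, small enough that $\cM|_U$ admits a finite presentation $(\cA|_U)^b\stackrel{\varphi}{\ra}(\cA|_U)^a\ra\cM|_U\ra 0$, and put $A=\cA(U)$, $A_n=\cA_n(U)$, so that $A=\varprojlim_n A_n$, the maps $A_{n+1}\ra A_n$ are flat with dense image by (ii), and $A\ra A_n$ is flat by Proposition \ref{prop-co}. Applying the exact functor $\cA_n\otimes_\cA(\cdot)$ to the presentation and passing to sections over $U$ (where the coherent modules in sight are acyclic, exactly as in the proof of Proposition \ref{prop-co}) identifies $\cM_n(U)$ with $A_n\otimes_A\cM(U)$ and shows that the transition maps $\cM_m(U)\ra\cM_n(U)$, and hence $\cM(U)\ra\cM_n(U)$, have dense image. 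Taking the inverse limit over $n$ of the presentations $A_n^b\ra A_n^a\ra\cM_n(U)\ra 0$ and invoking the Mittag-Leffler argument of \cite{ST5} (proof of Theorems A and B) to kill $\varprojlim^1$ of the systems of kernels and images, I obtain an exact sequence $A^b\stackrel{\varphi}{\ra}A^a\ra\varprojlim_n\cM_n(U)\ra 0$, whence $\varprojlim_n\cM_n(U)=\operatorname{coker}(\varphi)=\cM(U)$; letting $U$ vary over $\cS$ shows $\iota$ is an isomorphism.

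The step I expect to be the main obstacle -- as it already is in Proposition \ref{prop-co} and in \cite{ST5} -- is the commutation of the section functor over $U\in\cS$ with the inverse limit over $n$: one must see both that $\cM_n(U)\cong A_n\otimes_A\cM(U)$, so that the coherent system $(\cM_n)_n$ is faithfully computed by its sections, and that $\varprojlim^1$ of the relevant projective systems of $A_n$-modules vanishes. Both rely essentially on the density in hypothesis (ii) together with the Banach and noetherian structure in (i), via the canonical Banach topology on finitely generated modules over $\cA_n(U)$ and the Mittag-Leffler property of systems with dense transition maps. Everything else is a routine adaptation of the formalism already set up.
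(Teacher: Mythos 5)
Your overall strategy (set $\cM_n:=\cA_n\otimes_{\cA}\cM$, check $(\cM_n)_n\in{\rm coh}((\cA_n)_n)$, and prove that the comparison map $\cM\ra\varprojlim_n\cM_n$ is an isomorphism) is reasonable, but the step that carries all the weight is not justified. You pass to sections over $U\in\cS$ and assert that $\cM(U)$ is the cokernel of $\cA(U)^b\ra\cA(U)^a$ and that $\cM_n(U)\cong\cA_n(U)\otimes_{\cA(U)}\cM(U)$, on the grounds that ``the coherent modules in sight are acyclic, exactly as in the proof of Proposition \ref{prop-co}''. No such acyclicity is proved there, nor does it follow from the hypotheses (i)--(iii), which only concern the section algebras $\cA_n(U)$ and their transition/restriction maps; nothing controls $H^1(U,\cdot)$ of the kernel and image sheaves of your presentation, so $\Gamma(U,\cdot)$ need not be right exact on $(\cA|_U)^b\ra(\cA|_U)^a\ra\cM|_U\ra 0$ or on its base changes to $\cA_n$. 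What Proposition \ref{prop-co} does provide are statements about a system that is already in ${\rm coh}((\cA_n)_n)$ and its limit $M=\varprojlim_n\cM_n$ (exactness of $\Gamma$, and $\cA_n\otimes_{\cA}M\car\cM_n$); it does not give the identification $\cM_n(U)\cong\cA_n(U)\otimes_{\cA(U)}\cM(U)$ for the ambient finitely presented sheaf $\cM$ you start from --- that identification is essentially what you are trying to prove. (A smaller slip: coherence of $\cA_n$ as a sheaf of rings is obtained in the paper from (i) \emph{together with} the flatness of restrictions (iii) via \cite{BerthelotDI}, Prop.\ (3.1.1), not from (i) alone.)

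The repair is to never take sections. Since $\cA_n\otimes_{\cA}(\cdot)$ is right exact on sheaves, $\cM_n|_U$ is the sheaf cokernel of $(\cA_n|_U)^b\ra(\cA_n|_U)^a$; the system of these cokernels lies in ${\rm coh}((\cA_n|_U)_n)$ (this category is abelian by (i), (ii)), and the exactness of the sheaf-valued functor $\Gamma=\varprojlim_n$ from Proposition \ref{prop-co} then yields $\varprojlim_n\cM_n|_U\cong{\rm coker}\bigl((\cA|_U)^b\ra(\cA|_U)^a\bigr)=\cM|_U$, with no Mittag--Leffler or Banach-topology input beyond what is already packaged in Proposition \ref{prop-co}. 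Once rewritten this way your argument coincides with the paper's proof, which simply localizes, observes that the free modules $\cA^a$ are coadmissible and that $\sC_{\cA|_U}$ is closed under cokernels via the equivalence with ${\rm coh}((\cA_n|_U)_n)$, and uses that coadmissibility is a local property. Your disposal of the ``in particular'' clause (coherent $\Rightarrow$ locally finitely presented, by \ref{para-COH}) is correct.
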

\begin{proof}
Let $V\subseteq X$ be an open set. By the local nature of our
conditions imposed we may apply the above construction to the
projective limit sheaf $\cA|_V=\varprojlim_n (\cA_n|_V)$ and
obtain the abelian category of coadmissible $\cA|_V$-modules on
$V$. Obviously, a given $\cA$-module $\cM$ is coadmissible if and
only if this is true for $\cM|_V$ for all open sets $V\subseteq
X$. Suppose $\cM$ is of finite presentation. Locally, $\cM$ equals
the cokernel of a morphism of the type $\cA^{a}\ra \cA^b$ for
numbers $a,b\geq 0$ and is therefore coadmissible.
\end{proof}
We finally suppose that our chosen base $\cS$ for the topology on
$X$ is such that
\begin{itemize}
    \item[(iii)] for all $U,V\in\cS$ with $V\subset U$
    the restriction homomorphism $$\Gamma(U,\cA_{n})\ra\Gamma(V,\cA_n)$$ is left and right
    flat for all $n\geq 0$.
\end{itemize}
 According to (i) and \cite{BerthelotDI}, Prop. (3.1.1) the sheaf $\cA_n$
 is then
 a sheaf of coherent rings for all $n$. Of course, this does {\it not}
imply the
 coherence of the sheaf $\cA$.
\end{para}
\begin{para}
We keep the notation of the preceding paragraph. For each $n\geq
0$ we now assume additionally that we are given a finite group
$H_n$ and a crossed product
$$\cB_n=\cA_n*_{\sigma_n,\tau_n}H_n$$ with the properties: $\cA_n\hookrightarrow\cB_n$ maps $K$ into the center and
for $x\in X, U\in\cS_x$ and  $h\in H$ the algebra automorphism
$$\sigma_n(h):\cA_n(U)\car\cA_n(U)$$ is an isometry of the Banach algebra
$\cA_n(U)$ with respect to a defining Banach norm on $\cA_n(U)$
which is {\it submultiplicative} (cf. 2.5). We further suppose
that this collection of sheaves $(\cB_n)_n$ of $K$-algebras forms
a projective system and that the transition morphism
$\cB_{n+1}\ra\cB_n$ is compatible with the map $\cA_{n+1}\ra\cA_n$
for all $n$.
\begin{prop}\label{prop-hypothesis}
The system $(\cB_n)_n$ satisfies conditions (i), (ii) and (iii).
\end{prop}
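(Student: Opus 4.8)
The plan is to verify each of the three conditions (i), (ii), (iii) for the crossed product sheaves $\cB_n = \cA_n *_{\sigma_n,\tau_n} H_n$ by reducing them to the corresponding (already assumed) properties of the sheaves $\cA_n$, using the structural facts about crossed products collected in \ref{para-ACT}. The guiding principle throughout is that, over any $U\in\cS$, the ring $\cB_n(U)$ is a free left (and right) $\cA_n(U)$-module of finite rank $|H_n|$ with basis the units $\overline h$, $h\in H_n$; so all finiteness and flatness statements transfer along the faithfully flat inclusion $\cA_n(U)\hookrightarrow\cB_n(U)$, while the Banach structure transfers because a finite free module over a Banach algebra carries a canonical (product) Banach norm.

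First I would treat (i). Over $U\in\cS$, condition (i) for $\cA_n$ gives that $\cA_n(U)$ is a left and right noetherian $K$-Banach algebra; by the remark following (ii) we may and do take the defining norm to be submultiplicative, and by hypothesis each $\sigma_n(h)$ is then an isometry of $\cA_n(U)$. Define a norm on $\cB_n(U)=\bigoplus_{h\in H_n}\overline h\,\cA_n(U)$ by $\|\sum_h \overline h\, r_h\| := \max_h \|r_h\|$. This makes $\cB_n(U)$ a $K$-Banach space (finite direct sum of Banach spaces), and one checks submultiplicativity of the product using that $\overline x\cdot\overline y = \overline{xy}\,\tau(x,y)$ with $\tau(x,y)\in\cA_n(U)^\times$ and that conjugation by $\overline h$ is the isometry $\sigma_n(h)^{-1}$ (possibly after rescaling the norm by the finitely many values $\|\tau(x,y)^{\pm1}\|$, which does not affect the Banach-algebra structure or the equivalence class of the norm). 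Left and right noetherianity of $\cB_n(U)$ is exactly \cite{Passman}, Prop. 1.6, since $\cA_n(U)$ is noetherian and $H_n$ is finite. This gives (i) for $\cB_n$.

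Next, (ii): the transition map $\cB_{n+1}(U)\to\cB_n(U)$ is, by the compatibility hypothesis, the map $\sum_h \overline h\, r_h \mapsto \sum_h \overline h\,(\text{image of }r_h)$, i.e. it is $\varphi_n\otimes\mathrm{id}$ under the identification $\cB_n(U)\cong \cA_n(U)\otimes_{\cA_n(U)}\cB_n(U)$... more precisely it is the map induced on finite free modules by the transition map $\varphi_n\colon\cA_{n+1}(U)\to\cA_n(U)$. Continuity is then immediate from the defining norms above. Density of the image: the image of $\varphi_n$ is dense in $\cA_n(U)$ by (ii) for $\cA_n$, hence the image of $\varphi_n\otimes\mathrm{id}$ is dense in the product norm. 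Flatness is the one point where I would invoke the earlier lemma: the composite $\cA_{n+1}(U)\xrightarrow{\varphi_n}\cA_n(U)\hookrightarrow\cB_n(U)$ is flat (composite of a flat map with a free, hence flat, extension) and factors through $\cA_{n+1}(U)\hookrightarrow\cB_{n+1}(U)$; since $\cB_{n+1}(U)=\cA_{n+1}(U)*H_{n+1}$ is a crossed product and $K\subseteq\cA_{n+1}(U)$ is central, Lemma \ref{Ardakov} applies and yields that $\cB_{n+1}(U)\to\cB_n(U)$ is left flat. The right-flat statement follows symmetrically (working with right modules, or applying the lemma to the opposite rings, which are again crossed products). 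This gives (ii).

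Finally (iii): for $V\subset U$ in $\cS$, the restriction $\cB_n(U)\to\cB_n(V)$ is again $\mathrm{res}\otimes\mathrm{id}$ on finite free modules over the restriction map $\cA_n(U)\to\cA_n(V)$, which is left and right flat by (iii) for $\cA_n$; flatness of the induced map on finite free modules then follows by the same Lemma \ref{Ardakov} argument as above (the composite $\cA_n(U)\to\cA_n(V)\hookrightarrow\cB_n(V)$ is flat and factors through $\cA_n(U)\hookrightarrow\cB_n(U)$, a crossed product with $K$ central, so $\cB_n(U)\to\cB_n(V)$ is flat; symmetrically on the right). This completes the verification. \textbf{The main obstacle} I anticipate is purely bookkeeping: one must make sure the norm chosen on $\cB_n(U)$ is genuinely submultiplicative and that it is independent of $U$ in the sense needed to get an actual sheaf of Banach algebras, and that replacing it by an equivalent submultiplicative norm (per \cite{BGR}, Prop. 1.2.1/2 in the noncommutative form) does not disturb the compatibility of the transition maps; and one must be slightly careful that "flat" in Lemma \ref{Ardakov} is being applied with the crossed-product sheaf evaluated on $U\in\cS$, where condition (i) guarantees the relevant rings are noetherian so that finitely generated modules behave well. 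None of this is deep, but it is the place where the proof could go wrong if the norms are not set up with care.
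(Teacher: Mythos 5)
Your proposal is correct and follows essentially the same route as the paper's proof: the maximum norm with respect to the basis $\{\overline{h}\}$ together with Passman's noetherianity result for condition (i), density inherited from the transition maps of the $\cA_n$, and Lemma~\ref{Ardakov} applied to the two commutative squares over $U\in\cS$ for the flatness statements in (ii) and (iii). The only minor deviations are cosmetic: you rescale to a genuinely submultiplicative norm where the paper simply verifies continuity of multiplication by the explicit estimate involving $\max_{i,j,k}|\lambda_k(\overline{h}_i\overline{h}_j)|$, and you describe the transition map as basis-preserving, which is a slight over-specification of the hypotheses but is exactly what the paper's terse density claim uses as well, and you add the (correct) remark that right flatness follows by passing to opposite rings, a point the paper leaves implicit.
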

\begin{proof}
Let $x\in X$ and $U\in\cS_x$. We have $\cB_n(U)=\cA_n(U)*H_n$
which is a noetherian ring (cf. 2.1). Let $|.|$ denote the chosen
submultiplicative Banach norm on $\cA_n(U)$. Then $\cB_n(U)$ has a
natural Banach space norm given by the maximum norm $q(\cdot)$
with respect to the $\cA_n(U)$-module basis $\{\overline{h}: h\in
H_n\}$. By assumption we have
$$|\overline{h}^{-1}x\overline{h}|=|\sigma_n(h)(x)|=|x| \hskip30pt {\rm (*)}$$ for $h\in H_n,
x\in\cA_n(U).$ Let $\bar{h}_1,...,\bar{h}_s$ be an enumeration of
the finitely many elements $\{ \bar{h}: h\in H_n\}$. Let us write
$\mu=\sum_k \lambda_k(\mu)\overline{h}_k$ for an arbitrary element
$\mu\in \cB_n(U)$ with coefficients $\lambda_k(\mu)\in\cA_n(U)$.
In particular, $q(\mu)=\max_k |\lambda_k(\mu)|$. Applying this to
the product $\bar{h}_i \bar{h}_j\in \cB_n(U)$ defines coefficients
$\lambda_k (\bar{h}_i\bar{h}_j)$. For two arbitrary elements
$\mu,\mu'\in\cB_n(U)$ we now compute
$$
\begin{array}{ccl}
  \mu\mu'=\sum_{i,j}\lambda_i(\mu)\bar{h}_i\lambda_j(\mu')\bar{h}_j
& = &  \sum_{i,j}\lambda_i(\mu)
\sigma_n(h_i)^{-1}(\lambda_j(\mu'))\bar{h}_i\bar{h}_j \\
   &  &  \\
   & = & \sum_{i,j,k}\lambda_i(\mu)
\sigma_n(h_i)^{-1}(\lambda_j(\mu'))\lambda_k(\bar{h}_i\bar{h}_j)\bar{h}_k. \\
\end{array}
$$

For the value of $q(\mu\mu')$ we therefore find
$$
\begin{array}{ccl}
\max_{i,j,k} |\lambda_i(\mu)
\sigma_n(h_i)^{-1}(\lambda_j(\mu'))\lambda_k(\bar{h}_i\bar{h}_j) |
& \leq &\max_{i,j,k} |\lambda_i(\mu)|\cdot
|\sigma_n(h_i)^{-1}(\lambda_j(\mu'))|\cdot
|\lambda_k(\bar{h}_i\bar{h}_j) |
 \\
   &  &  \\
   & \leq  &
q(\mu)q(\mu')\max_{i,j,k}
|\lambda_k(\overline{h}_i\overline{h}_j)|\\
\end{array}
$$
using ${\rm (*)}$ for the last inequality. This shows that the
ring multiplication on $\cB_n(U)$ is continuous with respect to
the Banach topology coming from $q(\cdot)$. In other words,
$\cB_n(U)$ is a noetherian Banach algebra which yields (i). We
furthermore have a commutative diagram of rings
\[\xymatrix{
\cA_{n+1}(U)\ar[d]\ar[r] &  \cA_n(U)\ar[d]\\
 \cB_{n+1}(U) \ar[r]  & \cB_n(U) }
\]
in which the upper horizontal arrow is flat with dense image and
the vertical arrows are finite free ring extensions. By definition
of the Banach topologies the lower horizontal arrow has dense
image. According to Lemma \ref{Ardakov} it is flat whence (ii).
Applying loc.cit. to the lower horizontal arrow in the commutative
diagram
\[\xymatrix{
\cA_{n}(U)\ar[d]\ar[r]^{res} &  \cA_n(V)\ar[d]\\
 \cB_{n}(U) \ar[r]^{res}  & \cB_n(V) }
\]
for $U,V\in\cS$ with $V\subset U$ finally yields (iii).
\end{proof}
Let $\cB:=\varprojlim_n\cB_n.$ By the above proposition we
therefore have an equivalence between abelian categories
\begin{numequation}\label{equ-equivalence}\Gamma: {\rm
coh}((\cB_n)_n)\car\sC_\cB\end{numequation} and each sheaf $\cB_n$
is coherent. Moreover, ${\rm coh}(\cB)\subset\sC_{\cB}.$
\end{para}

\section{Differential operators on homogeneous spaces}\label{sect-BB}

\renewcommand{\cA}{\mathcal{A}}
\renewcommand{\cB}{\mathcal{B}}

From now on we will use the following notation: $p$ is a prime
number in $\bbZ$ and $\bbQ_p\subseteq L\subseteq K$ denotes a
chain of finite extensions of $\bbQ_p$. The absolute value $|.|$
on $K$ is normalized by $|p|=p^{-1}$. Let $\R\subseteq L$ be the
ring of integers and $\pi_L\in \R$ a uniformizer. Let
$\kappa:=o_L/(\pi_L)$ denote the residue field of $L$. Let
$[L:\bbQ_p]$ and $e$ be the degree and the ramification index of
the extension $L/\bbQ_p$ respectively. Also, $o_K\subset K$
denotes the integers in $K$ and $\pi_K\in o_K$ a uniformizer. An
$o_K$-submodule of a vector space $V$ over $K$ is called a {\it
lattice} if it contains a basis of $V$ over $K$.

\vskip8pt

We recall some notions and constructions related to differential
operators on homogeneous spaces
(\cite{AW},\cite{BMR08},\cite{BBII}) thereby fixing some notation.

\begin{para}
Let $\bG$ be a connected split reductive group scheme over $o_L$.
We denote its group of $o_L$-valued points by $\bG(o_L)$.
Let $\bT\subset\bG$ be a
maximal torus with Lie algebra $\frt$. Let $X^*(\bT)$ and
$X_*(\bT)$ be the group of algebraic characters respectively
cocharacters of $\bT$ with the usual perfect pairing
$$\langle \cdot,\cdot\rangle: X^*(\bT)\times X_*(\bT)\lra\bbZ$$ defined by $z^{\langle a,b\rangle}:=a\circ b(z). $
Let $\Phi\subset X^*(\bT)$ denote the root system determined by
the adjoint action of $\bT$ on the Lie algebra $\frg$ of $\bG$.
Let $W$ denote the corresponding Weyl group. For any $w\in W$ we
fix a representative $\dot{w}$ in $\bG(o_L).$

\vskip8pt

We choose a Borel subgroup scheme $\bB\subset\bG$ containing $\bT$
and let $\Phi^+\subset\Phi$ be the associated subset of positive
roots. We write $\bN$ for the unipotent radical of $\bB$. We
identify, once and for all, the torus $\bT$ with the abstract
Cartan subgroup $\bB/\bN$ via the morphism
$\bT\subset\bB\rightarrow\bB/\bN$ where the second map equals the
canonical projection. The group $W$ acts naturally on the spaces
$\frt^*:=\Hom_{o_L}(\frt,o_L),~\frt^*_L:=\frt^*\otimes_{o_L} L$
and $\frt_L:=\frt\otimes_{o_L} L$. Via differentiation $d:
X^*(\bT)\hookrightarrow\frt^*$ we view $X^*(\bT)$ as a subgroup of
$\frt^*$. We have that $X^*(\bT)\otimes_{\bbZ}L=\frt^*_L$ and, via
$\langle .,.\rangle$ and base change from $\bbZ$ to $L$, that
$X_*(\bT)\otimes_{\bbZ}L=\frt_L$. Let
$\rho=\frac{1}{2}\sum_{\alpha\in\Phi^+}\alpha.$ Let
$\check{\alpha}$ be the coroot of $\alpha\in\Phi$ viewed as an
element of $\frt_L$. An arbitrary weight $\mu\in\frt^*_L$ is
called {\it dominant} if
$$\mu(\check{\alpha}) \notin \{-1,-2,-3,...\}$$ for all
$\alpha\in\Phi^+$. 
The weight $\mu$ is called {\it regular} if its stabilizer under
the $W$-action is trivial.

\end{para}

\begin{para}
We write $\bN$ for the unipotent radical of $\bB$ and denote by
$\bN^{-}$ the unipotent radical of the Borel subgroup scheme
opposite to $\bB$. The schemes $\bB$ and $\bN$ act via right
translations on $\bG$ and we put
\[
\begin{array}{cccc}
  \tiX:=\bG/\bN, &  &  & X:=\bG/\bB$$  \\
\end{array}
\]

for the corresponding quotients. These are smooth and separated
schemes over $o_L$. The canonical projection
$\xi:\tiX\longrightarrow X$ is a smooth morphism. Since $\bT$
normalizes $\bN$ the scheme $\tiX$ has a right $\bT$-action making
$\xi$ a $\bT$-torsor for the Zariski topology on $X$ (in the sense
of \cite{Milne}, III.\S4). Indeed, a covering of $X$ which
trivializes $\xi$ is given by the open subschemes $\bU_w, w\in W$
where
$$\bU_w:= {\rm image~of~}\dot{w}\bN^{-}\bB$$
under the canonical projection $\bG\rightarrow \bG/\bB$
(\cite{Jantzen}, II.(1.10)). We let $\cS$ denote the collection of
affine open subsets of $X$ over which $\xi$ becomes trivial. By
what we have just said it is a base for the Zariski topology on
$X$.

\end{para}

\begin{para}
Let $\cO_{\tiX}$ be the structure sheaf of $\tiX$ and let
$\cT_{\tiX}={\mathcal Der}_{o_L}(\cO_{\tiX})$ be the tangent sheaf
of $\tiX$ (\cite{EGA_IV_4}, 16.5.7). Using the natural action of
$\cT_{\tiX}$ on $\cO_{\tiX}$ by derivations we have the
semi-direct product $\cO_{\tiX}\oplus \cT_{\tiX} $ of $\cT_{\tiX}$
with the abelian Lie algebra $\cO_{\tiX}$, a sheaf of Lie algebras
over $o_L$. The corresponding universal enveloping algebra is
called the sheaf of {\it crystalline differential operators (of
level zero)} on $\tiX$ (\cite{BerthelotOgus}, \cite{BMR08}).
Following \cite{AW}, 4.2 we denote it by $\cD_{\tiX}.$ It is a
sheaf of $o_L$-algebras and, at the same time, an
$\cO_{\tiX}$-module through the map
$$\cO_{\tiX}\lra \cO_{\tiX}\oplus \cT_{\tiX}, f\mapsto (f,0).$$

There is a positive increasing $\bbZ$-filtration on $\cD_{\tiX}$
$$ 0\subset F_0\cD_{\tiX}\subset F_1\cD_{\tiX}\subset
F_2\cD_{\tiX}\subset\cdot\cdot\cdot$$ consisting of coherent
$\cO_{\tiX}$-submodules, such that

\[\begin{array}{ccccc}

F_0\cD_{\tiX}=\cO_{\tiX}, &
F_1\cD_{\tiX}=\cO_{\tiX}\oplus\cT_{\tiX} & {\rm and } &
F_m\cD_{\tiX}= F_1\cD_{\tiX}\cdot F_{m-1}\cD_{\tiX} &
{\rm~for~}m>1.\\
\end{array}\]
Given two local sections $\partial$ and $f$ of $\cT_{\tiX}$ and
$\cO_{\tiX}$ respectively we have
$$\partial\cdot f-f\cdot\partial=\partial(f)$$
for their commutator in $\cD_{\tiX}$. This means that the
associated graded sheaf of $\cD_{\tiX}$ is canonically isomorphic
to the symmetric algebra of the locally free $\cO_{\tiX}$-module
$\cT_{\tiX}$,
\begin{numequation}\label{equ-gradedunivenvelope}
gr_\bullet\cD_{\tiX}\car {\rm
Sym}_{\cO_{\tiX}}\cT_{\tiX}.\end{numequation} We have an obvious
morphism of $\cD_{\tiX}$ to the sheaf of usual differential
operators on $\tilde{X}$ (\cite{EGA_IV_4}, 16.8). Since
$char(L)=0$ it is an isomorphism over the generic fibre of
$\tilde{X}$.

\vskip8pt

 By the same token there are
corresponding sheaves of crystalline differential operators
$\cD_X$ and $\cD_{\bT}$ on the smooth $o_L$-schemes $X$ and $\bT$
respectively.

\vskip8pt


\end{para}\begin{para}

We let
$$\tilde{\cD}:=(\xi_*(\cD_{\tiX}))^\bT$$ denote the {\it Borho-Brylinski relative
enveloping algebra} of the $\bT$-torsor $\xi$ (\cite{BBII}, \S1).
Again, this is a sheaf of $o_L$-algebras and, at the same time, an
$\cO_{X}$-module through the map
$$\xi^{\sharp}:\cO_X\lra(\xi_*\cO_{\tiX})^\bT.$$ It has a positive increasing $\bbZ$-filtration
$$F_m\tilde{\cD}:=(\xi_*(F_m\cD_{\tiX}))^\bT$$
induced by the filtration $F_\bullet\cD_{\tiX}$. Given an open
subset $U$ from $\cS$ a choice of trivialization
$$\xi^{-1}(U)\simeq U\times_{o_L}\bT$$ over $U$ induces an
isomorphism of $o_L$-algebras
$$\tilde{\cD}(U)\car\cD (U\times_{o_L}\bT)^\bT=\cD_X(U)\otimes_{o_L} U(\frt)$$
where the right-hand side is the usual tensor product of
$o_L$-algebras and where we use the symbol $\cD$ to denote the
crystalline differential operators on the smooth $o_L$-scheme
$U\times_{o_L}\bT$. The last identity follows here from
$$\cD (U\times_{o_L}\bT)^\bT=(\cD_X(U)\otimes_{o_L}\cD_{\bT}(\bT))^\bT=\cD_X(U)\otimes_{o_L}\cD_{\bT}(\bT)^\bT$$
using (\cite{Jantzen}, I.2.10(3)) together with the well-known
isomorphism $U(\frt)\car\cD_{\bT}(\bT)^\bT$ for the split torus
$\bT$ (loc.cit., I.7.13).

\end{para}\begin{para}

The group $\bG$ acts on $\tiX$ and $X$ by left translations and
$\xi$ is $\bG$-equivariant. The sheaf $\cD_{\tiX}$ has a natural
$\bG$-equivariant structure inherited from the usual equivariant
structures of $\cO_{\tiX}$ and $\cT_{\tiX}$. Since the right
$\bT$-action on $\tiX$ commutes with the left $\bG$-action the
sheaf $\tiD$ is naturally $\bG$-equivariant.

\vskip8pt

In the following we denote the constant sheaf with fiber $U(\frg)$
on $\tiX$ again by $U(\frg)$ (and similarly in other cases).
Differentiating the left $\bG$-action on $\tiX$ gives a
homomorphism $U(\frg)\ra \cD_{\tiX}$ which is $\bG$-equivariant
with respect to the adjoint action of $\bG$ on $U(\frg)$
(\cite{DemazureGabriel}, II.\S4.4.4). Whenever the open set
$V\subset\tiX$ is right $\bT$-stable the section of this morphism
over $V$ has image in $\cD_{\tiX}(V)^\bT$. Letting $V$ run through
the subsets $\xi^{-1}(U)$ for $U\in\cS$ one obtains a morphism
\begin{numequation}\label{equ-varphi}\varphi: U(\frg)\lra\tiD\end{numequation} which is $\bG$-equivariant
with respect to the adjoint action of $\bG$ on $U(\frg)$.
Similarly, differentiating the right $\bT$-action on $\tiX$
induces a homomorphism \begin{numequation}\label{equ-psi}\psi:
U(\frt)\lra\tiD\end{numequation} which is a central embedding. Its
image lies in the $\bG$-invariants of $\tiD$ (\cite{Milicic93},
\S3).

\end{para}

\section{Deformations and completions}\label{sect-DC}

We keep the notation from the preceding section and recall a
Beilinson-Bernstein type equivalence of categories for
$p$-adically completed universal enveloping algebras as
constructed in \cite{AW}.
\begin{para}
Let as above $\frg$ and $\frt$ be the $o_L$-Lie algebras of the
group schemes $\bG$ and $\bT$ respectively. Fix a number $n\geq
0$. We denote by $U(\cdot)$ the universal enveloping algebra of
whatsoever Lie algebra we wish to consider. Choosing an
$o_L$-basis for $\frg$ the algebra $U(\frg)$ is endowed with a
positive filtration $F_\bullet U(\frg)$, its usual PBW-filtration
(\cite{Dixmier}, 2.3). The associated graded algebra equals the
symmetric algebra $S(\frg)$ on $\frg$. According to these
properties $U(\frg)$ is therefore a {\it deformable} $o_L$-algebra
in the sense of \cite{AW}, Def. 3.5. Its $n$-{\it th deformation}
(loc.cit.) is the $o_L$-submodule
$$U(\frg)_n:=\sum_{i\geq 0} \pi_L^{in}F_iU(\frg)$$
of $U(\frg)$. It is easily seen to be equal to $U(\pi_L^{n}\frg)$
and is therefore even a subalgebra of $U(\frg)$. We denote by
$$\hUgm=\varprojlim_m U(\frg)_n/p^mU(\frg)_n$$ its $p$-adic completion.
In the same way we have the algebra $U(\frt)_n$ and its $p$-adic
completion $\hUtm$.
\end{para}

\begin{para}
As already recalled above the group $\bG$ acts on $U(\frg)$ by the
usual adjoint representation. The ring of invariants $U(\frg)^\bG$
has the induced PBW-filtration from $U(\frg)$ and we may form
$$\Uinvm :=\sum_{i\geq 0} \pi_L^{in}F_iU(\frg)^\bG.$$ It is a subalgebra of
$U(\frg)_n$. Let $\frn$ and $\frn^-$ be the Lie algebras of $\bN$
and $\bN^-$ respectively. The triangular decomposition
$\frg=\frn^-\oplus\frt\oplus\frn$ induces the linear projection
$U(\frg)\ra U(\frt)$ with kernel $\frn^{-}U(\frg)+U(\frg)\frn$. As
in characteristic zero one shows that its restriction to the zero
weight space with respect to the adjoint action of $\bT$ on
$U(\frg)$ is an algebra homomorphism. We therefore have an algebra
homomorphism
$$\phi: U(\frg)^\bG\ra U(\frt)$$ (cf. also \cite{JantzenCharp}, \S9 for the corresponding construction over a field of positive
characteristic). It extends to a homomorphism
\begin{numequation}\label{equ-phi}\phi_n: \Uinvm\lra \Utm.\end{numequation} For all that follows we fix a linear homomorphism
$$\lambda\in\Hom_{o_L}(\pi_L^n\frt,o_K)$$ and view it as a character
$U(\frt)_n\ra o_K$. Using $\phi_n$ we may form the algebra
$$\Ulm:=\Ugm\otimes_{\Uinvm,\lambda}o_K$$
and its $p$-adic completion $\hUlm$. We let
$\hUlmK:=\hUlm\otimes_{o_K} K$. Since $U(\pi_L^n\frg)$ is left and
right noetherian, the rings $\hUlm$ and $\hUlmK$ are left and
right noetherian (\cite{BerthelotDI}, (3.2.2) (iii)).

\vskip8pt

Remark: In \cite{AW} the authors consider only the case
$\lambda(\pi_L^n\frt)\subseteq o_L$. However, all constructions of
loc.cit. immediately generalize to the slightly more general case
considered here.

\end{para}

\begin{para}
Recall the relative universal enveloping algebra $\tiD$ of the
torsor $\xi$. Let $U\in\cS$ and choose a trivialization
$\xi^{-1}(U)\simeq U\times_{o_L}\bT$. The induced isomorphism
\begin{numequation}\label{equ-Dtrivial}\tiD(U)\simeq \cD_X(U)\otimes_{o_L} U(\frt)\end{numequation} translates the
positive $\bbZ$-filtration on the source into the tensor product
filtration on the target (where the second factor in the target
has its usual PBW-filtration). Thus, $\tiD(U)$ is a deformable
algebra and hence, there is a $n$-th deformation
\begin{numequation}\label{equ-deform}\tiD(U)_n :=\sum_{i\geq 0} \pi_L^{in}F_i\tiD(U).\end{numequation} We
remark in passing that (\ref{equ-Dtrivial}) induces an algebra
isomorphism
\begin{numequation}\label{equ-def}\tiD(U)_n\simeq
\cD_X(U)_n\otimes_{o_L} U(\frt)_n.\end{numequation}

The formation $U\mapsto \tiD(U)_n$ yields a presheaf on the basis
$\cS$ of $X$. The sheafification functor produces therefore a
sheaf $\tiD_n$ on $X$ whose algebra of sections over any $U\in\cS$
coincides with $\tiD(U)_n$ (e.g. \cite{BGR}, last remarks in
9.2.1). 
Similarly, there is a
sheaf $\widehat{\tiD_n}$ on $X$ whose algebra of sections over any
$U\in\cS$ equals the $p$-adic completion of $\tiD(U)_n$, i.e.
$$\widehat{\tiD_n}(U)\simeq \widehat{\cD_X(U)_n}\hat{\otimes}_{o_L}
\widehat{U(\frt)_n}.$$ The formation of the sheaf
$\widehat{\tiD_n}$ is compatible with the homomorphisms $\varphi$
and $\psi$ (cf. (\ref{equ-varphi}) and (\ref{equ-psi})) whence two
homomorphisms
\[\begin{array}{ccccc}
  \widehat{\varphi_n}: \hUgm\lra \widehat{\tiDm} &  &{\rm and}  && \widehat{\psi_n}: \hUtm\lra \widehat{\tiDm}.  \\
\end{array}
\]
The second map remains a central embedding. We therefore may form
the central reduction
$$\hDml:=\widehat{\tiDm}\otimes_{\hUtm,\lambda} o_K.$$ 

It is a major technical result in \cite{AW} (loc.cit, Lemma 4.10)
that these two maps are related via the $p$-adic completion of the
deformed 'Harish-Chandra homorphism' (\ref{equ-phi})
$$\widehat{\phi_n}: \widehat{\Uinvm}\lra \widehat{\Utm}.$$ This
yields a morphism of sheaves of algebras
$$\widehat{\varphi^\lambda_n}: \hUlm= \widehat{\Ugm}\otimes_{\widehat{\Uinvm},\lambda}o_K \lra
\widehat{\tiDm}\otimes_{\hUtm,\lambda} o_K=\hDml.$$ We denote the
base change to $K$ of these sheaves by $\hDmlK$ and $\hUlmK$
respectively and obtain a morphism
$\widehat{\varphi^\lambda_n}_{,K}:\hUlmK\ra\hDmlK.$ In the
following we will regard $\hDmlK$ as a module over $\hUlmK$ via
this map. Note that $\hDmlK$ is a sheaf of coherent rings
(loc.cit., Prop. 6.5 (d)).

\end{para}

\begin{para}
We have the abelian categories ${\rm coh}(\hDmlK)$ and ${\rm
coh}(\Gamma(\hDmlK))$ of coherent (left) $\hDmlK$-modules
respectively $\Gamma(\hDmlK)$-modules. Given a
$\Gamma(\hDmlK)$-module $M$ we may form the $\hDmlK$-module
$$ {\rm Loc}_{\lambda} (M):=\hDmlK\otimes_{\Gamma(\hDmlK)} M.$$
Conversely, the module of global sections $\Gamma(\cM)$ of a
$\hDmlK$-module $\cM$ is of course a $\Gamma(\hDmlK)$-module. The
functors $({\rm Loc}_\lambda, \Gamma(\cdot))$ form an adjoint
pair. The following result may be viewed as a $p$-adically
completed version of the {\it th\'eor\`eme principal} of
\cite{BB81}, restricted to coherent modules. It is one of the main
results of \cite{AW}. The case $n=0$ was already obtained in an
earlier paper by Noot-Huyghe (\cite{NootHuyghe09}). Recall that we
have fixed a homomorphism $\lambda\in\Hom_{o_L}(\pi_L^n\frt,o_K)$.
\begin{thm}\label{thm-BB} {\rm (Ardakov-Wadsley, Noot-Huyghe)} Suppose the weight $\lambda+\rho\in\frt^*_K$ is dominant
and regular. The adjoint pair $({\rm Loc}_\lambda, \Gamma(\cdot))$
induces mutually inverse equivalences of categories
$${\rm coh}(\Gamma(\hDmlK))\car{\rm coh}(\hDmlK).$$
\end{thm}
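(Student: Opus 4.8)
The plan is to adapt the classical proof of \cite{BB81} to the $\pi_K$-adically complete setting. The three substantive inputs will be: (A) the identification of global sections $\Gamma(X,\hDmlK)\cong\hUlmK$; (B) exactness of $\Gamma$ on $\catD$; and (C) global generation, i.e. surjectivity of the counit ${\rm Loc}_\lambda\Gamma(\cM)\to\cM$ for every coherent $\cM$. Granting these, the equivalence is formal. Starting from a finite presentation $\hUlmK^b\to\hUlmK^a\to M\to 0$ of a coherent $\Gamma(\hDmlK)$-module $M$, one applies the right exact functor ${\rm Loc}_\lambda$ and then, using (B) and (A) (the latter together with the triviality that $\Gamma$ preserves finite direct sums, so $\Gamma(\hDmlK^a)=\hUlmK^a$), one reads off that the unit $M\to\Gamma({\rm Loc}_\lambda M)$ is an isomorphism. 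Conversely, given a coherent $\hDmlK$-module $\cM$, write $0\to\cK\to{\rm Loc}_\lambda\Gamma(\cM)\to\cM\to 0$ using (C); applying the exact functor $\Gamma$ and the triangle identities shows $\Gamma(\cK)=0$, whence $\cK={\rm Loc}_\lambda(\Gamma(\cK))=0$ by (C) again, so the counit is an isomorphism as well.

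For (A) one feeds the completed deformed Harish-Chandra homomorphism $\widehat{\phi_n}\colon\widehat{\Uinvm}\to\widehat{\Utm}$ (\cite{AW}, Lemma 4.10) into the computation of the global sections of the Borho-Brylinski relative enveloping algebra $\tiD$ on the flag scheme, which itself rests on the quasi-affineness of $\tiX=\bG/\bN$ (\cite{BBII}); passing to the $n$-th deformation, $p$-adic completion, central reduction along $\lambda$ and inverting $\pi_K$ then yields $\Gamma(X,\hDmlK)=\hUlmK$.

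For (B), since $\hDmlK$ is $\pi_K$-adically complete and $X$ is proper over $o_L$, a d\'evissage along powers of $\pi_K$ reduces the vanishing $H^i(X,\cM)=0$ $(i>0)$ to the analogous statement for coherent modules over the mod-$p^m$ reduction of the integral sheaf $\Dml$ underlying $\hDml$. Choosing a good filtration, it suffices to treat $\mathrm{gr}\,\cM$, a coherent sheaf on the relative scheme $\underline{\Spec}_X(\mathrm{gr}(\Dml/p^m))$, which by \eqref{equ-gradedunivenvelope} is, up to a twist by the line bundle attached to $\lambda$, a deformation over $o_L/p^m$ of the cotangent bundle $T^*X$; the dominance of $\lambda+\rho$ is exactly the positivity making the higher cohomology of such twisted cotangent sheaves vanish. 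Over the generic fibre this is the vanishing theorem underlying \cite{BB81}, and one propagates it integrally using flatness of $T^*X$ over $o_L$, cohomology and base change, and again the quasi-affineness of $\tiX$. I expect \emph{this} step to be the crux: the vanishing in positive residue characteristic is not formal, and it is precisely here that the three hypotheses on $\bG_{\bar{s}}$ and $p$ enter, guaranteeing the good behaviour of the moment map $T^*X\to\frg^*$ and of the nilpotent cone of the modular Lie algebra $\Lie(\bG_{\bar{s}})$.

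For (C) one shows that $\widehat{\varphi^\lambda_n}_{,K}\colon\hUlmK\to\hDmlK$ generates $\hDmlK$ as a sheaf of algebras. Over $U\in\cS$, trivializing $\xi$ and passing to associated graded via \eqref{equ-def}, this reduces --- now using that $\lambda+\rho$ is also \emph{regular} --- to the classical facts that $U(\frg_L)$ surjects onto $\Gamma(X_L,\cD^\lambda)$ and that $\cD^\lambda$ is globally generated, i.e. to \cite{BB81}. Regularity of $\lambda+\rho$ is what upgrades the exact, conservative functor $\Gamma$ to an equivalence; combined with (A) and (B) this closes the argument.
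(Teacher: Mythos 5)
The paper does not actually prove this theorem: it is quoted from \cite{AW} (with the case $n=0$ due to \cite{NootHuyghe09}), so your sketch has to stand on its own. Its formal skeleton --- exactness of $\Gamma$ on coherent modules plus global generation imply, by the standard five-lemma/counit argument, that $({\rm Loc}_\lambda,\Gamma)$ are mutually inverse --- is fine and is indeed the mechanism behind the cited proof. But your input (A) is both unnecessary and unavailable here. The statement concerns coherent modules over $\Gamma(\hDmlK)$ itself, so all you need is the triviality $\Gamma(\hDmlK^a)=\Gamma(\hDmlK)^a$; the identification $\Gamma(X,\hDmlK)\cong\hUlmK$ is a \emph{separate} theorem (Thm.~\ref{thm-globalsections}), valid only under the hypotheses (H1)--(H3), which Thm.~\ref{thm-BB} does not assume. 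Writing presentations $\hUlmK^b\to\hUlmK^a\to M\to 0$ therefore either smuggles in hypotheses absent from the statement or conflates two a priori different rings. The same misplacement recurs in (B): you claim the cohomology vanishing is ``precisely'' where (H1)--(H3) enter, but in the paper and in \cite{AW} those hypotheses are used only for the global-sections computation (via Demazure's invariant theory and \cite{BMR08}), while the localization equivalence itself needs only dominance and regularity of $\lambda+\rho$. If your proof of (B) genuinely required (H1)--(H3), you would be proving a strictly weaker theorem than the one asserted.

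Beyond that, the two nonformal inputs (B) and (C) are gestured at rather than proved, and the gestures point in partly wrong directions. The sheaf $\hDmlK$ is supported on the special fibre of $X$, so the characteristic-zero theorem of \cite{BB81} on $X_L$ cannot be invoked directly for global generation of coherent $\hDmlK$-modules; global generation of \emph{arbitrary} coherent modules (not of the sheaf of rings) is the hard point, and in the actual proofs it is obtained by twisting with ample line bundles attached to dominant characters, Serre-type vanishing for large twists, and an untwisting step in which dominance and regularity of $\lambda+\rho$ are genuinely used. Likewise in (B): after your d\'evissage and passage to the graded situation one is looking at coherent sheaves on a (twisted) cotangent bundle over the special fibre, and their higher cohomology does not vanish merely because $\lambda+\rho$ is dominant --- vanishing holds for the structure sheaf and for suitable twists, not for arbitrary coherent sheaves, which is exactly why the twisting/untwisting argument is needed. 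You also do not address the existence of coherent $\hDml$-lattices inside coherent $\hDmlK$-modules, which is what makes the reduction mod $\pi_K$ legitimate, nor how the vanishing is lifted back from the reduction to the $\pi_K$-adically completed sheaf. So as written the proposal reproduces the formal part of the Ardakov--Wadsley/Noot-Huyghe argument but leaves precisely the two substantive steps unestablished, and it misstates where the hypotheses on $\bG_{\bar{s}}$ and $p$ are needed.
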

\end{para}
\begin{para}
Let $\bar{\kappa}$ be an algebraic closure of $\kappa$. Let us
consider the following three hypothesis on the geometric closed
fibre $\bG_{\bar{s}}=\bG\otimes_{o_L}\bar{\kappa}$ of $\bG$ which
are familiar from the theory of modular Lie algebras (cf.
\cite{JantzenCharp}, 6.3).

\begin{itemize}
    \item[(H1)] The derived group of $\bG_{\bar{s}}$ is (semisimple) simply
    connected.
    \item[(H2)] The prime $p$ is good for the $\bar{\kappa}$-Lie algebra $Lie(\bG_{\bar{s}})$.
    \item[(H3)] There exists a $\bG_{\bar{s}}$-invariant non-degenerate bilinear form on $Lie(\bG_{\bar{s}})$.
\end{itemize}

A prominent example satisfying these conditions for all primes $p$
is the general linear group (using the trace form for {\rm (H3)}).
Any almost simple and simply connected $\bG_{\bar{s}}$ satisfies
these conditions if $p\geq 7$ (and if $p$ does not divide $n+1$ in
case $\bG_{\bar{s}}$ is of type $A_n$). For a more detailed
discussion of these conditions we refer to loc.cit.

\vskip8pt

The next theorem is a version of \cite{AW}, Thm. 6.10. It is
proved in loc.cit. under the assumptions that $\bG$ is semisimple
and simply connected and that the prime $p$ is {\it very good} (in
the sense of loc.cit., 6.8).

\begin{thm}\label{thm-globalsections}{\rm (Ardakov-Wadsley)}
Assuming {\rm (H1)-(H3)} the map
$\widehat{\varphi^\lambda_n}_{,K}$ induces an algebra isomorphism
\begin{numequation}\label{equ-varphilambda}\widehat{\varphi^\lambda_n}_{,K}:\hUlmK\car\Gamma(X,\hDmlK).\end{numequation}
\end{thm}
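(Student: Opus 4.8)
The plan is to reduce the statement to Theorem \ref{thm-BB} combined with the classical Beilinson–Bernstein computation of global sections over the generic fibre, then lift to the $p$-adic completion. The point is that $\hDmlK$ is, by construction, a completion of the central reduction of the deformed relative enveloping algebra $\tiDm$, and over the generic fibre $X_L$ the sheaf of ($\lambda$-twisted) differential operators has known global sections by \cite{BB81}: one has $U(\frg_K)_\theta \car \Gamma(X_L, \cD^\lambda_{X_L})$ whenever $\lambda+\rho$ is dominant regular, because the higher cohomology of $\cD^\lambda_{X_L}$ vanishes and the map from the enveloping algebra is injective with the right image. So the strategy has two layers: first identify $\Gamma(X,\hDmlK)$ with a completion of $\Gamma(X_L,\cD^\lambda_{X_L})$, and second match that completion with $\hUlmK$ using the deformed Harish-Chandra homomorphism $\widehat{\phi_n}$ of \cite{AW}, Lemma 4.10, which is precisely what makes $\widehat{\varphi^\lambda_n}$ well-defined.

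\textbf{Key steps.} First I would establish that $\widehat{\varphi^\lambda_n}_{,K}$ is \emph{injective}. For this one reduces mod $\pi_K$ (or works with the $\pi_K$-adic filtration) and uses that the associated graded of $\hUlm$ injects into the associated graded of $\Gamma(X,\hDml)$; the graded pieces are controlled by the symmetric algebra computations, and here hypotheses (H1)–(H3) enter exactly as in \cite{AW}, 6.3, guaranteeing that the relevant map $S(\frg)^{\bG}\to S(\frt)$ behaves correctly and that the Chevalley restriction-type statement holds in the modular setting. Second, I would show \emph{surjectivity} by a vanishing argument: one must check $H^i(X, \hDml)=0$ for $i>0$, which by the standard dévissage for $p$-adic completions reduces to cohomology vanishing for the sheaves $\tiDm/p^m$, and ultimately to vanishing of higher cohomology for the graded sheaf $\mathrm{Sym}_{\cO_X}(\cT_X)\otimes(\text{twist})$ on the flag scheme — again available under (H1)–(H3) by the positivity of $\lambda+\rho$. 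With $H^1$ vanishing, one lifts a compatible system of generators from the special fibre, and the density/completeness arguments from Section 2 (conditions (i)–(iii)) let one conclude that the map on global sections is surjective.

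\textbf{Main obstacle.} The hard part will be the cohomology vanishing $H^{>0}(X,\hDml)=0$, together with the matching of $\Gamma(X,\hDml)$ with $\hUlm$ at the \emph{integral} (non-generic) level. Over the generic fibre everything is \cite{BB81}, but the deformed sheaves $\tiDm$ are genuinely integral objects whose cohomology is sensitive to the prime $p$; this is where one must invoke the full strength of \cite{AW}, Thm. 6.10 and its proof, and verify that the passage from their hypotheses (semisimple, simply connected, $p$ very good) to ours (H1)–(H3) on the geometric closed fibre causes no loss — the derived group being simply connected and $p$ being good are exactly what is needed so that the center behaves well and $\widehat{\phi_n}$ remains surjective after base change. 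Once that identification is in place, tensoring with $K$ and using that $\widehat{\varphi^\lambda_n}_{,K}$ is a morphism of Banach algebras which is both injective and has dense image (hence bijective by the open mapping theorem, both sides being noetherian Banach algebras) finishes the proof.
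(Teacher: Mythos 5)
There is a genuine gap: the one thing your proposal does not do is the one thing this theorem actually requires. Everything you sketch (injectivity via associated graded, cohomology vanishing for $\hDml$, lifting along the $p$-adic completion) is a re-narration of the proof of \cite{AW}, Thm.\ 6.10, which is proved there under the stronger hypotheses that $\bG$ is semisimple simply connected and $p$ is \emph{very good}. The content of the present theorem is precisely the verification that the weaker hypotheses (H1)--(H3) still make that argument run, and you only assert this ("the derived group being simply connected and $p$ being good are exactly what is needed so that the center behaves well") without identifying where the hypotheses enter or why they suffice. The paper's proof pins this down: the assumptions in \cite{AW} are used only to apply Demazure's invariant-theoretic results (\cite{DemazureInvariants}, \S6, Corollaire du Th\'eor\`eme 2 and Th\'eor\`eme 3) to the syst\`eme de racines pr\'ecis\'e $\sR=(\frt_{\bbZ},\Phi^\vee,\rho)$ over the ring $\bbF_p$, and to apply \cite{BMR08}, Prop.\ 3.4.1. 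One must then check, using \cite{JantzenCharp}, 9.6, that under (H1)--(H3) the torsion index $t_{\sR}$ is invertible in $\bbF_p$ and that the existence of $\alpha^\vee\in\Phi^\vee$ with $\alpha^\vee/2\in\frt_{\bbZ}$ forces $p\neq 2$, so Demazure's results still apply, and that the input from \cite{BMR08} depends only on the derived group of $\bG_{\bar s}$ and hence extends to reductive $\bG$. Without this torsion-index/root-datum verification your argument has no way to justify the integral (special-fibre) Harish-Chandra and cohomology statements under (H1)--(H3); reducing to the generic fibre and \cite{BB81} does not help, since, as you note yourself, the deformed integral sheaves are exactly where $p$ matters.

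A secondary but real flaw: your concluding step, that a morphism of Banach algebras which is injective with dense image is bijective "by the open mapping theorem", is false (a continuous injection of Banach spaces with dense image need not be surjective; noetherianity does not repair this). Surjectivity in this setting comes from the integral statement --- the isomorphism $\widehat{\varphi^\lambda_n}:\hUlm\car\Gamma(X,\hDml)$ at the level of $p$-adically complete lattices, obtained by running \cite{AW}, 6.9/10 with the inputs above --- followed by inverting $\pi_K$, not from a functional-analytic density argument.
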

\begin{proof}
Let $\frt_{\bbZ}$ denote the Lie algebra of the canonical
extension of the torus $\bT$ to a group scheme over $\bbZ$, and
let $\rho$ be the map $\Phi^\vee\car\Phi\subset \frt^\vee_{\bbZ}$
between coroots, roots and the dual of the free $\bbZ$-module
$\frt_{\bbZ}$ which comes from the root datum of $\bG$. The triple
$\sR=(\frt_{\bbZ},\Phi^\vee, \rho)$ is then a {\it syst\`eme de
racines {\rm (r\'eduit)} pr\'ecis\'e} in the sense of
\cite{DemazureInvariants}. The assumptions in \cite{AW} are made
in order to be able to apply \cite{DemazureInvariants}, \S6.
Corollaire du Th\'eor\`eme 2 and Th\'eor\`eme 3 to $\sR$ and the
commutative ring $\bbF_p$ as well as to be able to apply
\cite{BMR08}, Prop. 3.4.1. However, these results are also
available under the hypotheses (H1)-(H3). Indeed, let $t_{\sR}$ be
the torsion index of $\sR$ (\cite{DemazureInvariants}, \S5) and
assume {\rm (H1)-(H3)}. The discussion in \cite{JantzenCharp}, 9.6
shows that $t_{\sR}$ is invertible in $\bbF_p$ and that the
existence of an $\alpha^\vee\in\Phi^\vee$ with
$\alpha^\vee/2\in\frt_{\bbZ}$ implies $p\neq 2$. Hence,
\cite{DemazureInvariants}, Corollaire du Th\'eor\`eme 2 and
Th\'eor\`eme 3 apply to $\sR$ and the commutative ring $\bbF_p$.
Furthermore, the required statement from \cite{BMR08} extends to
the case of $\bG$ since it depends only on the derived group of
$\bG_{\bar{s}}$. Using these inputs the argumentation in
\cite{AW}, 6.9/10 goes through.
\end{proof}
Remark: Under the hypotheses (H1)-(H3) and $\lambda+\rho$ being
dominant and regular, the isomorphism (\ref{equ-varphilambda})
together with Thm. \ref{thm-BB} implies an equivalence of
categories ${\rm coh}(\hDmlK)\simeq {\rm Mod}^{\rm fg}(\hUlmK)$
where the right-hand side denotes the finitely generated (left)
modules over the noetherian ring $\hUlmK$.

\end{para}

\begin{para}
We now study how the family of sheaves $\hDmlK$ on $X$ varies in
the deformation parameter $n\geq 0$. Let $U\in\cS$. The inclusions
$$F_i\tiD_{n+1}(U)=\pi_L^{i(n+1)}F_i\tiD(U)\subseteq \pi_L^{in}F_i\tiD(U)=F_i\tiD_n(U)$$
for all $i\geq 0$ induce a morphism of sheaves of algebras
$\tilde{\cD}_{n+1}\ra\tilde{\cD}_{n}$ for all $n\geq0$. It extends
to a morphism
\begin{numequation}\label{equ-resD}res_\cD: \widehat{\cD^\lambda_{n+1}}_{,K}\lra\hDmlK\end{numequation} and yields a
projective system $(\hDmlK)_n, n\geq 0$.
\begin{prop}\label{prop-verifyD}
Let $U,V\in\cS$ with $V\subset U$ and $n\geq 0$.

\begin{itemize}
    \item[(i)] Each $\Gamma(U,\hDmlK)$ is a left
    and right noetherian Banach algebra,
    \item[(ii)] the transition homomorphism
$\Gamma(U,\widehat{\cD^\lambda_{n+1}}_{,K})\ra\Gamma(U,\hDmlK)$ is
left and right flat with dense image,

\item[(iii)] the restriction homomorphism
$\Gamma(U,\hDmlK)\ra\Gamma(V,\hDmlK)$ is left and right flat.
\end{itemize}

\end{prop}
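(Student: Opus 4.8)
The plan is to reduce all three assertions to the local description of $\hDmlK$ over a trivializing open set and then to inherit them from known facts about crystalline differential operators, most of which are already in \cite{AW}. First I would fix $x\in X$, an open $U\in\cS$ containing it, and a trivialization $\xi^{-1}(U)\simeq U\times_{o_L}\bT$. By the discussion around (\ref{equ-def}) this yields $\widehat{\tiDm}(U)\simeq\widehat{\cD_X(U)_n}\,\hat{\otimes}_{o_L}\widehat{\Utm}$, under which the central embedding $\widehat{\psi_n}$ corresponds to the inclusion of the right-hand tensor factor (the $\lambda$-twist being locally trivial, as in the classical situation). Central reduction along $\lambda$ therefore gives a non-canonical identification $\Gamma(U,\hDmlK)\simeq\widehat{\cD_X(U)_n}\otimes_{o_L}K$; in particular the algebra of sections over a trivializing open is, as a ring, independent of $\lambda$, and $\lambda$ enters only through the restriction maps $\Gamma(U,\hDmlK)\to\Gamma(V,\hDmlK)$, which under two compatible trivializations are the crystalline restriction map composed with the ring automorphism induced by the change of trivialization over $V$ (a section of $\bT$).

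For (i): $\cD_X(U)_n$ is the $n$-th deformation of the crystalline differential operators on the smooth affine $o_L$-scheme $U$. Since $\cO_X(U)$ is a finitely generated $o_L$-algebra and $\cT_X(U)$ is a finitely generated projective $\cO_X(U)$-module, the graded ring $gr_\bullet\cD_X(U)_n$ --- which up to rescaling of the degree-$i$ part equals $\mathrm{Sym}_{\cO_X(U)}\cT_X(U)$ --- is noetherian, hence so is its $p$-adic completion; by the usual argument for complete filtered rings (\cite{BerthelotDI}, (3.2.2)(iii)) $\widehat{\cD_X(U)_n}$ is then left and right noetherian, and so is $\widehat{\cD_X(U)_n}\otimes_{o_L}K$, which is a $K$-Banach algebra for the quotient of the $p$-adic norm. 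This is contained in the proof of \cite{AW}, Prop. 6.5; the only new input is the marginally more general $\lambda\in\Hom_{o_L}(\pi_L^n\frt,o_K)$, which changes nothing in the argument.

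For (ii) and (iii): by the first paragraph it is enough to show that the maps $\widehat{\cD_X(U)_{n+1}}\to\widehat{\cD_X(U)_n}$ and $\widehat{\cD_X(U)_n}\to\widehat{\cD_X(V)_n}$ are left and right flat, since extension of scalars along $o_L\to o_K$, localization at $\pi_K$, and conjugation by a ring automorphism all preserve flatness, and the transition and restriction morphisms are compatible with the central reductions by construction. Both maps are flat on associated graded rings: for (iii) because $\cT_X$ is a sheaf, whence $\mathrm{Sym}_{\cO_X(V)}\cT_X(V)=\mathrm{Sym}_{\cO_X(U)}\cT_X(U)\otimes_{\cO_X(U)}\cO_X(V)$ with $\cO_X(U)\to\cO_X(V)$ flat; for (ii) because in the commutative model $\mathrm{Sym}$ the transition map is restriction of functions from a larger polydisc to a smaller one. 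As a filtered homomorphism of complete filtered rings that is flat on graded rings is itself flat, flatness of the two completed maps follows --- this is exactly the flatness of deformation and restriction maps established in \cite{AW}, which I would quote rather than redo. Density of the image in (ii) is elementary: the finite-order operators $\bigcup_i F_i\tiD(U)$ lie in the $K$-span of $\tiD(U)_{n+1}$, and truncating the defining series shows that they are dense in $\Gamma(U,\hDmlK)$; density is preserved under the central reduction. Left--right symmetry is automatic, the relevant graded rings being commutative.

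The hard part is bookkeeping rather than estimation: one must make the identification $\Gamma(U,\hDmlK)\simeq\widehat{\cD_X(U)_n}\otimes_{o_L}K$ precise enough --- including its compatibility with $\widehat{\psi_n}$, with the transition morphisms $res_{\cD}$ of (\ref{equ-resD}), and with restriction to smaller opens in $\cS$ --- for the $n$-variation and localization results of \cite{AW} to apply verbatim. Once that is done, (i)--(iii) are formal consequences.
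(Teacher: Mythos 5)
Your route is, for most of the statement, the same as the paper's: noetherianity in (i) is extracted from Berthelot's results on $p$-adic completions of noetherian rings (\cite{BerthelotDI}, (3.2.3)), (iii) is the restriction flatness already contained in \cite{AW} (Prop.\ 5.9(c) together with Prop.\ 6.5(a)), density in (ii) comes, exactly as you say, from the image of the finite-order operators, i.e.\ of $\cD^\lambda_K(U)$, and your local identification $\Gamma(U,\hDmlK)\simeq\widehat{\cD_X(U)_n}\otimes_{o_L}K$ over a trivializing $U\in\cS$ is the same reduction the paper performs when it ``applies the toral character $\lambda$'' after choosing a trivialization of $\xi$ over $U$.

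The genuine gap is the flatness assertion in (ii), which is the one non-formal point, and your justification for it does not work. With the (deformed PBW) filtration you are using, the graded transition map is the inclusion $\cO_X(U)[\pi_L^{n+1}T_1,\dots,\pi_L^{n+1}T_r]\subset\cO_X(U)[\pi_L^{n}T_1,\dots,\pi_L^{n}T_r]$, an $\cO_X(U)$-algebra map sending each variable to $\pi_L$ times a variable. This map is \emph{not} flat: modulo $\pi_L$ it annihilates the variables, so already $o_L[\pi_L T]\subset o_L[T]$ fails (going-down fails, and ${\rm Tor}_1$ against $o_L[\pi_LT]/(\pi_L,\pi_LT)$ is nonzero), whereas flatness would be preserved under the base change $o_L\to o_L/\pi_L$. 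Hence ``flat on graded implies flat'' is simply unavailable along this route; it is equally unavailable for the gauge-norm filtrations on the completed algebras, because the transition map is not isometric and again kills the principal symbols of the generators. Your polydisc picture is the right intuition, but it is a statement about the completed $K$-algebras themselves (restriction to a Weierstrass subdomain, i.e.\ the phenomenon behind \cite{ST5}, Thm.\ 4.9), not about their graded rings, and it is precisely what has to be proved; nor can it be quoted from \cite{AW}, where the deformation parameter $n$ is fixed throughout and no flatness in $n$ is established. Be aware that the paper's own proof reduces, via \cite{AW}, Lemma 6.5 and \cite{BerthelotDI}, (3.2.3)(vii), to this same graded inclusion and then invokes the fibre criterion \cite{EGA_IV_3}, 11.3.10 --- but that criterion also requires flatness of the special fibre, which fails here, so this step cannot be taken over as stated either; a sound argument for (ii) should instead follow the scheme of \cite{ST5}, Thm.\ 4.9 (flatness of $D_{r'}(G)\to D_r(G)$), transported to the section algebras $\Gamma(U,\hDmlK)$ via the Lazard-type identifications, where inverting $p$ after completion is what rescues flatness.
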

\begin{proof}
Being a $p$-adic completion the ring $\Gamma(U,\hDml)$ is
$p$-adically complete and separated. According to
\cite{BerthelotDI}, (3.2.3) (iv),(vi) it is flat over $o_K$ and
left and right noetherian. In particuar, it is a lattice in the
$K$-vector space $\Gamma(U,\hDmlK)=\Gamma(U,\hDml)\otimes_{o_K} K$
and the corresponding gauge norm (\cite{NFA}, \S2) makes the
latter a (left and right noetherian) $K$-Banach algebra. This
shows (i). The flatness property (iii) follows from the proof of
\cite{AW}, Prop. 5.9 (c) together with the last part of loc.cit.,
Prop. 6.5 (a). Let us establish property (ii). Since $X$ is smooth
we may assume, by passing to a smaller $U\in\cS$, that the tangent
sheaf $\cT_X|_U$ is a free $\cO_X|_U$-module. We choose local
coordinates $t_1,...,t_r$ on $U$ and let
$\partial_1,...,\partial_r$ be the corresponding derivations.
According to (\ref{equ-gradedunivenvelope}) we have an algebra
isomorphism
\[\begin{array}{ccc}
  gr_\bullet \cD_X(U)\car \cO_X(U)[T_1...,T_r], &  & {\rm principal~
symbol~of~}\partial_i\mapsto T_i \\
\end{array}\]
onto the polynomial ring in the variables $T_i$ with coefficients
in $\cO_X(U)$. Now consider the sheaf
$$\Dml:=\tiDm\otimes_{U(\frt)_n,\lambda} o_K$$ on $X$. It follows from
\cite{AW}, Lemma 6.5 that the algebra of sections $\hDml(U)$ over
an open set $U\in\cS$ equals the $p$-adic completion of $\Dml(U)$.
By general results on adic completions of non-commutative rings
(e.g \cite{BerthelotDI}, (3.2.3) (vii)) we are therefore reduced
to prove that the homomorphism $\cD^\lambda_{n+1}(U)\ra
\cD^\lambda_n(U)$ is flat. To do this choose a trivialization
$$\xi^{-1}(U)\simeq U\times_{o_L}\bT$$ of the torsor $\xi$ over
$U$. The isomorphism (\ref{equ-def}) is compatible with variation
in $n$. Applying the toral character $\lambda$ we are reduced to
prove that
$$\cD_X(U)_{n+1}\subseteq \cD_X(U)_n\hskip25pt {\rm (*)}$$ is a flat homomorphism. Now
$\cD_X(U)_m\subseteq \cD_X(U)$ has the subspace filtration for all
$m\geq 0$ and the inclusion ${\rm (*)}$ is therefore a filtered
morphism. The induced $PBW$-filtration on source and target is
positive, hence complete. By general principles (e.g. \cite{ST5},
Prop. 1.2) we are therefore reduced to prove flatness of the
graded map
$$gr_\bullet \cD_{n+1}(U)\subseteq gr_\bullet \cD_n(U).$$ By choice of
$t_1,..,t_r$ this map equals the natural inclusion
$$\cO_X(U)[\pi_L^{n+1}T_1,...,\pi_L^{n+1}T_r] \subseteq
\cO_X(U)[\pi_L^{n}T_1,...,\pi_L^{n}T_r].$$ Since source and target
are flat over the discrete valuation ring $o_L$ the assertion
follows by a straightforward application of the fiber criterion
for flatness (\cite{EGA_IV_3}, 11.3.10).
To prove the statement about the image we consider the sheaf
$$\cD^\lambda_K:=\cD_0^\lambda\otimes_{o_K} K=\tilde{\cD}\otimes_{U(\frt_K),\lambda_K} K.$$
It is obvious from (\ref{equ-deform}) that
$\tilde{\cD}\otimes_{o_K} K=\tilde{\cD}_n\otimes_{o_K}K$ and thus
$\cD^\lambda_K=\cD^\lambda_n\otimes_{o_K} K$ holds for all $n\geq
0$. We deduce that the canonical homomorphism
$$\cD^\lambda_K(U)\lra\hDmlK(U)$$ has dense image for all $n$ and is compatible with
variation in $n$. This proves the assertion.
\end{proof}
\begin{cor}\label{cor-conditions1}
The projective system of sheaves $(\hDmlK)_{n\geq 0}$ on $X$
satisfies the assumptions {\rm (i)},{\rm (ii)},{\rm (iii)} of
section \ref{sect-crossed}.
\end{cor}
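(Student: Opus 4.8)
The plan is to unpack Corollary \ref{cor-conditions1}: it merely asserts that the projective system $(\hDmlK)_{n\geq 0}$, together with the base $\cS$ of Zariski-open affines over which the torsor $\xi$ trivializes, satisfies the three hypotheses (i), (ii), (iii) imposed on a projective system of sheaves in Section \ref{sect-crossed}. Since each of those hypotheses has already been verified \emph{pointwise} in Proposition \ref{prop-verifyD}, almost nothing remains to be done; the point of the corollary is to phrase the bookkeeping correctly. First I would recall that $\cS$ was defined as the collection of affine open subsets of $X$ over which $\xi$ becomes trivial and was observed to be a base for the Zariski topology on $X$. For a point $x\in X$ one then takes $\cS_x$ to be the subfamily of members of $\cS$ containing $x$; since $X$ is a scheme (hence has a base of affine opens) and $\cS$ is a base, $\cS_x$ is a neighbourhood basis of $x$, and $\cS=\bigcup_{x\in X}\cS_x$ as required in the setup of Section \ref{sect-crossed}.

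Next I would match the three conditions one-to-one. Condition (i) of Section \ref{sect-crossed} asks that $\Gamma(U,\cA_n)$ be a left and right noetherian $K$-Banach algebra for $U\in\cS_x$; this is exactly Proposition \ref{prop-verifyD}(i). Condition (ii) asks that the transition map $\Gamma(U,\cA_{n+1})\ra\Gamma(U,\cA_n)$ be continuous, left and right flat, with dense image; continuity is automatic since the map $\Gamma(U,\widehat{\cD^\lambda_{n+1}}_{,K})\ra\Gamma(U,\hDmlK)$ is induced by the bounded morphism $res_\cD$ (cf.\ (\ref{equ-resD})) between the gauge-normed Banach algebras, and flatness together with density is Proposition \ref{prop-verifyD}(ii). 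Condition (iii) asks that for $U,V\in\cS$ with $V\subset U$ the restriction $\Gamma(U,\cA_n)\ra\Gamma(V,\cA_n)$ be left and right flat for all $n$; this is Proposition \ref{prop-verifyD}(iii). So the corollary follows by simply quoting Proposition \ref{prop-verifyD} three times.

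The only mild subtlety — and this is the ``main obstacle'', though a minor one — is verifying that $V\in\cS$ whenever one passes to a smaller neighbourhood inside some $U\in\cS_x$ in order to guarantee that the intersections $\cS_x$ really do form a neighbourhood basis \emph{within} $\cS$, rather than merely a basis of all opens. Here one uses that the members $\bU_w$ of the standard trivializing cover are themselves affine and that any standard affine open subscheme of $\bU_w$ still trivializes $\xi$ (restricting a trivialization); hence the collection of affine opens trivializing $\xi$ is itself a base for the Zariski topology, so one may shrink freely inside $\cS$. With this observed, the hypotheses of Section \ref{sect-crossed} are literally the contents of Proposition \ref{prop-verifyD}, and the corollary is immediate.

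\begin{proof}
This is precisely what has just been verified. We take, for $x\in X$, the family $\cS_x\subseteq\cS$ of those affine opens in $\cS$ containing $x$; since the members $\bU_w$ of the trivializing cover are affine and every standard affine open of an affine scheme trivializing $\xi$ still trivializes $\xi$, the family $\cS$ is a base for the Zariski topology on $X$ and hence each $\cS_x$ is a neighbourhood basis of $x$ consisting of members of $\cS$, with $\cS=\bigcup_{x\in X}\cS_x$. Conditions (i), (ii) and (iii) of section \ref{sect-crossed} are then the three assertions of Proposition \ref{prop-verifyD}, respectively (the continuity required in (ii) being automatic, as the transition map is the bounded morphism $res_\cD$ of (\ref{equ-resD}) between the Banach algebras of (i)).
\end{proof}
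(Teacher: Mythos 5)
Your proof is correct and follows the paper exactly: the corollary is stated there as an immediate consequence of Proposition \ref{prop-verifyD}, with no further argument, and your bookkeeping (taking $\cS_x$ to be the members of $\cS$ containing $x$, plus the remark that continuity of the transition map is automatic from $res_\cD$ preserving the integral models) just makes explicit what the paper leaves tacit.
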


\vskip8pt

We have the inclusions $U(\frg)_{n+1}\subseteq U(\frg)_n$ and
similarly for the algebras $U(\frg)^\bG$ and $U(\frt)$. The maps
$\varphi, \phi$ and $\psi$ are visibly compatible with these
inclusions. 
We therefore obtain an algebra homomorphism
$$res_{u}:\widehat{\cU^\lambda_{n+1}}_{,K}\lra
\widehat{\cU^\lambda_{n}}_{,K}$$ which fits into the commutative
diagram
\[\xymatrix{
\widehat{\cU^\lambda_{n+1}}_{,K}\ar[d]^{res_{u}}\ar[r]^{\widehat{\varphi^\lambda_{n+1}}_{,K}} & \widehat{\cD^\lambda_{n+1}}_{,K} \ar[d]^{res_\cD}\\
 \widehat{\cU^\lambda_{n}}_{,K} \ar[r]^{\widehat{\varphi^\lambda_{n}}_{,K}}  &\hDmlK. }
\]

\end{para}

\section{Locally analytic
representations}\label{sect-dis}

\begin{para}
We recall some definitions and results about distribution algebras
of compact locally $L$-analytic groups (\cite{ST4}, \cite{ST5}).
We consider a locally $L$-analytic group $G$ and denote by
$C^{an}(G,K)$ the locally convex $K$-vector space of locally
$L$-analytic functions on $G$ as defined in \cite{ST4}. The strong
dual
$$D(G):=D(G,K):=C^{an}(G,K)'_b$$ is the algebra of $K$-valued
locally analytic distributions on $G$. The multiplication
$\delta_1\cdot\delta_2$ of distributions $\delta_1,\delta_2\in
D(G)$ is given by the convolution product
\[\delta_1\cdot\delta_2(f) =\delta_2(h_2\mapsto
\delta_1(h_1\mapsto f(h_1h_2))).\] Since $G$ is compact, $D(G)$ is
a $K$-Fr\'echet algebra.

\end{para}

\begin{para}

The universal enveloping algebra $U(\frg)$ of the Lie algebra
$\frg:=Lie(G)$ of $G$ acts naturally on $C^{an}(G,K)$. On elements
$\frx\in\frg$ this action is given by \[(\frx f)(h) = \frac{d}{dt}
(t\mapsto f(\exp_G(-t\frx)h))|_{t=0}\] where $\exp_G:\frg-->G$
denotes the exponential map of $G$, defined in a small
neighbourhood of $0$ in $\frg$. This gives rise to an embedding of
$U(\frg)_K:= U(\frg)\otimes_L K$ into $D(G)$ via

\[ U(\frg)_K\hookrightarrow D(G),~~\frx\mapsto
(f\mapsto(\dot{\frx}f)(1)).\]

Here $\frx\mapsto\dot{\frx}$ is the unique anti-automorphism of
the $K$-algebra $U(\frg)_K$ which induces multiplication by $-1$
on $\frg$.

\end{para}
\begin{para}

We will occasionally write $G_0$ for the same group $G$ but with
the locally $\bbQ_p$-analytic structure induced by restriction of
scalars. We point out that there is a canonical isomorphism of
$\bbQ_p$-Lie algebras $Lie(G)\simeq Lie(G_0)$ (\cite{B-VAR},
5.14.5) which we will use to identify both $\bbQ_p$-algebras from
now on. The inclusion of locally $L$-analytic functions into
locally $\bbQ_p$-analytic functions on $G$ gives rise to a
quotient map of topological algebras $D(G_0)\rightarrow D(G)$
whose kernel ideal is generated by the kernel of the natural map
\begin{numequation}\label{scalar} L \otimes_{\bbQ_p} Lie(G_0)\longrightarrow Lie(G),
a\otimes\frx\mapsto a\frx\end{numequation} (\cite{SchmidtAUS},
Lemma 5.1).

\end{para}

\begin{para}

We will now specialize to uniform pro-$p$ groups. We refer to
\cite{DDMS} for an extensive study of this important class of
locally $\bbQ_p$-analytic groups. We assume for the rest of this
section that $p\neq 2$. So let $G$ be a uniform pro-$p$ group of
dimension $d:={\rm dim}_{\bbQ_p} G$. Let $h_1,...,h_d$ be a set of
topological generators. In particular, any element $g\in G$ can
uniquely be written as $g=h_1^{\lambda_1}\cdot\cdot\cdot
h_d^{\lambda_d}$ with $\lambda_1,...,\lambda_d\in\bbZ_p$.
According to [loc.cit.], Thm. 4.30 the operations
\[\begin{array}{ccl}
  \lambda\cdot x & = & x^\lambda,\\
  &  &  \\
  x+y & = & \limi_{i\rightarrow\infty} (x^{p^{i}}y^{p^{i}})^{p^{-i}}, \\
   &  &  \\
 {[x,y]}  & = & \limi_{i\rightarrow\infty}  (x^{-p^{i}}y^{-p^{i}}x^{p^{i}}y^{p^{i}})^{p^{-2i}} \\
\end{array}
\]
for $x,y\in G$ and $\lambda\in\bbZ_p$ define on the set $G$ the
structure of a Lie algebra $L_G$ over $\bbZ_p$. This Lie algebra
is {\it powerful} in the sense that it is a free $\bbZ_p$-module
(of finite rank $d$) and satisfies $[L_G,L_G]\subseteq pL_G$.

\vskip8pt

The logarithm map $\log_G$ of the Lie group $G$ induces an
injective homomorphism of $\bbZ_p$-Lie algebras
$L_G\hookrightarrow Lie(G)$ (\cite{DDMS}, sect. 4.5/9.4). We will
therefore view $L_G$ as a distinguished $\bbZ_p$-lattice in the
$\bbQ_p$-Lie algebra $Lie(G)$.

\end{para}

\begin{para} Now let $G$ be a compact locally $L$-analytic group of
dimension $d={\rm dim}_L G$. Following
\cite{SchmidtAUS},\cite{OrlikStrauchIRR} we call $G$ {\it
$L$-uniform} if
\begin{itemize}
    \item[(1)] $G$ is uniform pro-$p$,
    \item[(2)] the $\bbZ_p$-submodule $L_{G}\subset Lie(G)$ is stable under
    multiplication by $o_L$ (and so is an $o_L$-submodule of $Lie(G)$).
\end{itemize}
We remark that any compact locally $L$-analytic group $G$ has a
basis of neighbourhoods of $1\in G$ consisting of open normal
subgroups which are $L$-uniform (\cite{SchmidtAUS}, Cor.4.4).

\vskip8pt

Let in the following $G$ be a locally $L$-analytic group of
dimension $d$ which is $L$-uniform. Then $L_{G}$ is a free
$o_L$-module of rank $d$. Let $\frx_1,...\frx_d$ be an $\R$-basis
of $L_{G}$ and let $v_1,...,v_{[L:\bbQ_p]}$ be a $\bbZ_p$-basis of
$\R$ with the additional property $v_1=1$. Then the elements
\[h_{ij}:=\exp_G(v_i\frx_j)\] are a set of topological
generators of $G$ of cardinality $[L:\bbQ_p]d={\rm dim}_{\bbQ_p}
G_0$.

\end{para}

\begin{para}

\vskip8pt

The underlying $K$-vector space of $D(G_0)$ admits the following
description. Fix an ordering of the generators $h_{ij}$. Let
$b_{ij}:=h_{ij}-1\in\bbZ[G]$ and $\bb^\al:=\prod_{ij}
b_{ij}^{\al_{ij}}\in\bbZ[G]$ (in the given ordering) for
$\al\in\bbN_0^{[L:\bbQ_p]d}$. Then $D(G_0)$ equals the set of all
convergent series
\[\lambda=\sum_{\al\in\bbN_0^{[L:\bbQ_p]d}}d_\al\bb^\al\]
with $d_\al\in K$ such that the set $\{|d_\al|r^{|\al|}\}_\al$ is
bounded for all real numbers $0<r<1$. The inclusion
$Lie(G_0)\subset D(G_0)$ is given on basis elements by
\begin{numequation}\label{compLie} v_i\frx_j\mapsto\log
(1+b_{ij})\end{numequation} where
$\log(1+X)=X-X^2/2+X^3/3-\cdot\cdot\cdot$ is the usual logarithm
series. The family of norms $||.||_r,~0<r<1$ where
\[ ||\lambda||_r:=\sup_\al |d_\al|r^{|\al|}\]
and their quotient norms define the Fr\'echet topologies on
$D(G_0)$ respectively $D(G)$. The norms belonging to the interval
$p^{-1}\leq r<1$ where $r\in p^{\bbQ}$ are multiplicative. In this
case the corresponding norm completions $D_r(G_0)$ and $D_r(G)$ of
$D(G_0)$ and $D(G)$ respectively are $K$-Banach algebras.
Furthermore, we emphasize that the norms in this case do not
depend on the choice of minimal set of topological generators for
the group $G$ (\cite{ST5}, discussion after Thm. 4.10).

\end{para}

\begin{para}

Let $r\in p^{\bbQ}$ such that $p^{-1}\leq r<1$. There is the
following norm filtration on $D_{r}(G_0)$ defined by the additive
subgroups
\[\begin{array}{rl}
  F^s_rD_r(G_0):= & \{\lambda\in D_r(G_0),~||\lambda||_r\leq p^{-s}\}, \\
   &  \\
  F^{s+}_r D_r(G_0):= & \{\lambda\in D_r(G_0),~||\lambda||_r < p^{-s}\}
  \\
\end{array}
\] for $s\in\bbR$ with graded ring
\[\gr D_r(G_0):=\oplus_{s\in\bbR}~gr_r^s D_r(G_0)\]
where $gr_r^s D_r(G_0):=F^s_r D_r(G_0)/F^{s+}_r D_r(G_0)$.

\vskip8pt

We may use the quotient norm of $||.||_r$ to define a norm
filtration on $D_r(G)$ as above. It coincides with the quotient
filtration of $F_r^\bullet D_r(G_0)$. We denote the associated
graded ring by $\gr D_r(G)$.

 Similarly, the absolute value on $K$
induces a filtration and a graded ring $gr K$ as above. The choice
of uniformizer $\pi_K$ induces an isomorphism of $gr K$ with the
ring of Laurent polynomials in one variable over $k$. The rings
$\gr D_r(G_0)$ and $\gr D_r(G)$ are naturally $gr K$-algebras.

\vskip8pt

Now let additionally $p^{-1}<r<1$. Then Thm. 4.5 of \cite{ST5}
shows that each $\gr D_r(G_0)$ equals a polynomial ring over $gr
K$ in $[L:\bbQ_p]d$ variables (the principal symbols of the
$b_{ij})$. Hence any $\gr D_r(G)$ will be a commutative $gr
K$-algebra.

\end{para}

\begin{para}

Let $\{G^{p^m}\}_{m\geq 0}$ be the lower $p$-series of the uniform
pro-$p$ group $G$, i.e.
\[G^{p^m}:=\{g^{p^m}: g\in G\} \]
for $m\geq 0$. Each $G^{p^m}$ is a characteristic open uniform
pro-$p$ subgroup of $G$ of index $p^{[L:\bbQ_p]md}$. The set
$\{G^{p^m}\}_m$ constitutes a neighbourhood basis of $1\in G$.
Because of $L_{G_0^{p^m}}=p^mL_{G_0}$ each open subgroup
$G^{p^m}$, endowed with its induced locally $L$-analytic
structure, is $L$-uniform. By functoriality of $D(\cdot)$
(\cite{KohlhaaseI}, 1.1) we obtain a series of subalgebras
\[D(G_0)\supset D(G_0^{p})\supset D(G_0^{p^2})\supset...\]
which yields, by passage to quotients, a series
\[D(G)\supset D(G^{p})\supset D(G^{p^2})\supset...\]
of subalgebras of $D(G)$. For each $m\geq 0$ we may apply the
above discussion to the group $\Gpm$ and obtain a family of norms
$||.||_r^{(m)}$ on $D(\Gzpm)$ as well as quotient norms $q^m_{r}$
on $D(\Gpm)$. Of course, these norms again induce filtrations and
graded rings as explained above whenever $r\in p^{\bbQ}$ with
$p^{-1}\leq r <1$.


\vskip8pt

To ease notation we put for $m\geq 0$ from now on
\[r_m:=\sqrt[p^m]{1/p}.\]
In particular, $r_0=1/p$.
\begin{lemma}\label{lem-free}
The ring extension $D(\Gpm)\subset D(G)$ completes to a ring
extension
\[D_{\rn}(\Gpm)\subset D_{r_m}(G)\] and $D_{r_m}(G)$ is a finite
and free (left or right) module over $D_{\rn}(\Gpm)$ on a basis
any system of coset representatives for the finite group $G/\Gpm$.
\end{lemma}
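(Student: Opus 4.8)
The statement has two parts: first, that the inclusion $D(\Gpm)\subset D(G)$ of Fr\'echet algebras extends to an inclusion of the Banach completions $D_{r_m}(\Gpm)\subset D_{r_m}(G)$ where one uses the norm $q^m_{r_m}$ on the source and $q^0_{r_m}$ on the target; and second, that the larger Banach algebra is finite free over the smaller one, with a basis given by lifting any set of coset representatives for $G/\Gpm$. I would handle the algebraic (module-theoretic) assertion first and the topological refinement (that it passes to completions and stays free) second.

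First I would work at the level of the integral group structure. Pick coset representatives $g_1,\dots,g_t$ for $G/\Gpm$ with $g_1=1$, and recall that $D(G)$ is free as a left (and as a right) $D(\Gpm)$-module on $\{g_1,\dots,g_t\}$ via the Dirac distributions $\delta_{g_i}$ — this is the functoriality/induction description of $D(\cdot)$ from \cite{KohlhaaseI} together with the fact that $C^{an}(G,K)=\bigoplus_i C^{an}(g_i\Gpm,K)$ as a $D(\Gpm)$-module, translation by $g_i$ identifying each summand with $C^{an}(\Gpm,K)$. So $D(G)=\bigoplus_{i=1}^t \delta_{g_i}D(\Gpm)$ as right $D(\Gpm)$-modules (and similarly on the left). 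The key point is then to compare norms: I would show that the Banach norm $q^0_{r_m}$ on $D(G)$, restricted to the subspace $\delta_{g_i}D(\Gpm)$ and transported to $D(\Gpm)$, is \emph{equivalent} to (in fact, I expect equal to, after the right normalization) the norm $q^m_{r_m}$ defining $D_{r_m}(\Gpm)$. Granting this, completing the direct sum decomposition termwise gives $D_{r_m}(G)=\bigoplus_{i=1}^t \delta_{g_i}D_{r_m}(\Gpm)$, which is exactly the assertion.

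The heart of the matter — and the step I expect to be the main obstacle — is the norm comparison. Here I would use the explicit Mahler/monomial description of the norms from 4.7–4.8: $D(\Gzpm)$ is the set of series $\sum_\alpha d_\alpha \bb^\alpha$ in the variables $b_{ij}^{(m)}=h_{ij}^{(m)}-1$ coming from topological generators $h_{ij}^{(m)}=\exp_G(p^m v_i\frx_j)$ of $\Gpm$, with $\|\lambda\|_{r}^{(m)}=\sup_\alpha |d_\alpha| r^{|\alpha|}$, and analogously for $G_0$ with generators $h_{ij}=\exp_G(v_i\frx_j)$. The clean way to see the compatibility is to compare the associated graded rings: by Thm.~4.5 of \cite{ST5}, $\gr D_{r_m}(G_0)$ and $\gr D_{r_m}(G_0^{p^m})$ are polynomial rings over $\gr K$ on the principal symbols of the respective $b_{ij}$, and the principal symbol of $b_{ij}^{(m)}=h_{ij}^{p^m}-1$ — using $r_m=\sqrt[p^m]{1/p}$ so that $\|b_{ij}^{(m)}\|_{r_m}$ picks up exactly the factor of $p$ that $\|\cdot\|_{r_0}$ would assign to $b_{ij}$ — is $p\cdot(\text{symbol of }b_{ij})^{p^m}$ up to lower-order and scalar factors. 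One then checks that $\gr D_{r_m}(G_0)$ is finite free over the subring $\gr D_{r_m}(G_0^{p^m})$ on the monomials $\prod \bar b_{ij}^{\,e_{ij}}$ with $0\le e_{ij}<p^m$, i.e.\ on (symbols of) a set of coset representatives; lifting a graded basis to a basis of the filtered objects (the filtrations being complete) then gives freeness of $D_{r_m}(G_0^{p^m})\subset D_{r_m}(G_0)$, and passing to the $L$-analytic quotients $D_{r_m}(G^{p^m})\subset D_{r_m}(G)$ via the surjection $D(G_0)\twoheadrightarrow D(G)$ of 4.4 — whose kernel in each completion is the closed ideal generated by $\ker(L\otimes_{\bbQ_p}Lie(G_0)\to Lie(G))$ — preserves both the norm and the freeness. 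Finally I would note that the whole argument is symmetric in left/right since $G$ is normalized by $\Gpm$ (indeed $\Gpm$ is normal), so $\delta_{g_i}D_{r_m}(\Gpm)=D_{r_m}(\Gpm)\delta_{g_i}$ and the two-sided statement follows. I'd expect the routine bookkeeping to be in verifying that $r_m$ is chosen precisely so the symbol computation works, and in checking that the norm $q$ on the direct sum is genuinely the maximum of the summand norms rather than merely equivalent to it — but both are direct calculations given the monomial description.
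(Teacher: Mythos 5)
Your overall architecture (decompose $D(G)=\bigoplus_i\delta_{g_i}D(\Gpm)$ over coset representatives, identify the norm that $D_{r_m}(G)$ induces on the subalgebra, pass to associated graded rings to verify it, then complete termwise) is the right one and is essentially what the paper's citation to \cite{SchmidtAUS} encapsulates. But there is a genuine error in the key identification: the lemma asserts that the closure of $D(\Gpm)$ inside $D_{r_m}(G)$ is $D_{\rn}(\Gpm)$, i.e.\ the completion of $D(\Gpm)$ with respect to its \emph{own} $r_0$-norm $q^m_{r_0}$ (note $\rn=r_0=1/p$, not $r_m$), whereas you take the source to be $D_{r_m}(\Gpm)$ and claim that the norm induced on $D(\Gpm)$ by $\|\cdot\|_{r_m}$ of $D(G)$ is equivalent (even equal) to $q^m_{r_m}$. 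That claim is false, and the two formulations are not interchangeable: since $r_0<r_m$ one has $q^m_{r_0}\leq q^m_{r_m}$, so the $q^m_{r_m}$-completion is a \emph{proper} dense subring of the $q^m_{r_0}$-completion, and with $D_{r_m}(\Gpm)$ as coefficient ring the finite-freeness assertion itself fails. Already for $L=\bbQ_p$, $G=\bbZ_p$, $c=\delta_{p^m}-1$: a series $\sum_k d_kc^k$ with $|d_k|r_0^k$ bounded but $|d_k|r_m^k$ unbounded lies in the closure of $D(p^m\bbZ_p)$ inside $D_{r_m}(\bbZ_p)$, is its own component along the identity coset in the (unique) coset decomposition, and does not lie in $D_{r_m}(p^m\bbZ_p)$; hence $D_{r_m}(\bbZ_p)$ is not generated over $D_{r_m}(p^m\bbZ_p)$ by the $p^m$ coset representatives. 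In fact your own symbol computation contradicts your norm claim: you observe that $\|h_{ij}^{p^m}-1\|_{r_m}=r_m^{p^m}=1/p$, i.e.\ the generators of $\Gpm$ acquire exactly the value that the $r_0$-norm of $D(\Gpm)$ assigns to them, not the value $r_m$. The correct statement, and the one the whole proof rests on, is that $\|\cdot\|^{(0)}_{r_m}$ restricted to $D(\Gzpm)\subset D(G_0)$ equals $\|\cdot\|^{(m)}_{r_0}$ (the mild extension of \cite{SchmidtAUS}, Prop.\ 6.2 invoked in the paper), after which freeness on coset representatives follows as in loc.cit., Lemma 7.4.

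Once this is corrected, the rest of your sketch goes through in the intended way: the graded ring of $D_{r_m}(G_0)$ is a polynomial ring on the symbols of the $b_{ij}$ in degree $1/p^m$, the symbol of $b^{(m)}_{ij}=h_{ij}^{p^m}-1$ is precisely $(\text{symbol of }b_{ij})^{p^m}$ in degree $1$ (your extra factor of $p$ is wrong — it would shift the degree and the norm to $p^{-2}$ — and should be dropped), the monomials with exponents $<p^m$ form a graded basis over the subring generated by these $p^m$-th powers, and completeness of the filtrations lets you lift. But the subring so obtained is $\gr$ of the $q^m_{r_0}$-closure of $D(\Gzpm)$, i.e.\ of $D_{\rn}(\Gzpm)$, not of $D_{r_m}(\Gzpm)$. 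The distinction is not cosmetic for the paper: the crossed-product description $D_{\rn}(\Gpm)*_{\sigma,\tau}G/\Gpm\simeq D_{r_m}(G)$ and the Lazard isomorphism $\widehat{U(\frg)_n}_{,K}\simeq D_{\rn}(\Gpm)$ both require the $r_0$-completion of the congruence subgroup's distribution algebra, so proving the statement with $D_{r_m}(\Gpm)$ in its place would not serve the sequel even if it were true.
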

\begin{proof}
This is a slight generalization of \cite{SchmidtAUS}, Lemma 7.4
using the fact that the norm $||.||_{r_m}^{(0)}$ on $D(G_0)$
restricts to $||.||_{\rn}^{(m)}$ on the subring $D(\Gzpm)\subset
D(G_0)$. This latter fact is itself a mild extension of
[loc.cit.], Prop. 6.2 which follows with the same proof given
there.
\end{proof}
\end{para}
\begin{para}\label{para-TAU}

Completely similar to the standard example of the group ring
(\cite{Passman}) the ring $D(G)$ is a crossed product of the ring
$S:=D(\Gpm)$ by the group $H:=G/\Gpm$ with action and twisting
given as follows. Fix a set of representatives
$\overline{H}\subset G$ for the group $G/\Gpm$ and view it in
$D(G)^\times$ via the map $\delta$ (associated Dirac
distribution). Then
$$\sigma(h)(s)=\delta_{\bar{h}}^{-1}s\delta_{\bar{h}}$$
for $s\in S$. On the other hand, given $h_1,h_2\in H$ we have
\[\bar{h}_1\bar{h}_2=\overline{h_1h_2}\cdot\tau(h_1,h_2)\]
with some uniquely determined $\tau(h_1,h_2)\in\Gpm$ and the
resulting $\tau:H\times H\rightarrow\Gpm\subseteq S^\times$ equals
the twisting.

\vskip8pt

As we have pointed out the norm $||.||_{\rn}^{(m)}$ on $S=D(\Gpm)$
is independent of the choice of topological generators for the
group $\Gpm$. This means that for $g\in G$ the conjugation
automorphism on $S=D(\Gpm)$ given by
$x\mapsto\delta_g^{-1}x\delta_g$ is an isometry. Thus, $\sigma$
extends to an action $H\rightarrow {\rm Out} (D_{\rn}(\Gpm))$.
Likewise $\tau$ extends to a $2$-cocycle with values in
$D_{\rn}(\Gpm)^\times$ and we may form the crossed product
\[D_{\rn}(\Gpm)\ast_{\sigma,\tau} H.\]

\begin{cor}\label{cor-crossed} There is a natural $K$-algebra isomorphism
\[D_{\rn}(\Gpm)\ast_{\sigma,\tau} G/\Gpm\car D_{r_m}(G).\]
\end{cor}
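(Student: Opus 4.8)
The plan is to write down the evident candidate map and verify that it is at once a homomorphism of $K$-algebras and an isomorphism of left $D_{\rn}(\Gpm)$-modules. Keep the set $\overline{H}\subset G$ of representatives for $H:=G/\Gpm$ fixed in \ref{para-TAU}, and view each $\overline{h}$ as the unit $\delta_{\overline{h}}\in D(G)^\times\subset D_{r_m}(G)^\times$. By the construction of crossed products recalled in Section \ref{sect-crossed}, the algebra $D_{\rn}(\Gpm)\ast_{\sigma,\tau} H$ is, as a left $D_{\rn}(\Gpm)$-module, free on the formal basis $\{\overline{h}:h\in H\}$; so I would define
\[\Psi:\ D_{\rn}(\Gpm)\ast_{\sigma,\tau} H\ \lra\ D_{r_m}(G),\qquad \textstyle\sum_{h\in H} s_h\,\overline{h}\ \longmapsto\ \sum_{h\in H} s_h\,\delta_{\overline{h}}.\]
This is well posed because, by the isometry remark in \ref{para-TAU}, $D_{\rn}(\Gpm)$ is stable under conjugation by $\delta_{\overline{h}}$ inside $D_{r_m}(G)$, so each summand on the right lies in $D_{r_m}(G)$.

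First I would check that $\Psi$ is a ring homomorphism. The multiplication in $D_{\rn}(\Gpm)\ast_{\sigma,\tau} H$ is determined by the two relations $\overline{h}^{-1}s\,\overline{h}=\sigma(h)(s)$ and $\overline{h}_1\,\overline{h}_2=\overline{h_1h_2}\,\tau(h_1,h_2)$. On the $D_{r_m}(G)$-side the same identities hold for the $\delta_{\overline{h}}$: the first because $\sigma(h)$ was \emph{defined} in \ref{para-TAU} as conjugation by $\delta_{\overline{h}}$ (first on $D(\Gpm)$, then extended by continuity to $D_{\rn}(\Gpm)$ using that this conjugation is $\|\cdot\|_{\rn}^{(m)}$-isometric); the second because $\overline{h}_1\overline{h}_2=\overline{h_1h_2}\,\tau(h_1,h_2)$ already holds in $G\subset D(G)\subset D_{r_m}(G)$, with $\tau(h_1,h_2)\in\Gpm\subset D_{\rn}(\Gpm)^\times$. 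Since $\Psi$ is $K$-linear, restricts to the identity on $D_{\rn}(\Gpm)$, sends $1\mapsto 1$, and carries the distinguished units to the distinguished units compatibly with these relations, it is a morphism of $K$-algebras.

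It remains to see that $\Psi$ is bijective. By construction $\Psi$ is a homomorphism of left $D_{\rn}(\Gpm)$-modules sending the basis $\{\overline{h}:h\in H\}$ to the family $\{\delta_{\overline{h}}:h\in H\}$. By Lemma \ref{lem-free}, $D_{r_m}(G)$ is finite and free as a left $D_{\rn}(\Gpm)$-module on any system of coset representatives for $G/\Gpm$, in particular on $\overline{H}$. Hence $\Psi$ identifies a basis with a basis and is an isomorphism of left $D_{\rn}(\Gpm)$-modules, whence an isomorphism of $K$-algebras.

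The only nonformal inputs are the two facts already isolated beforehand: that conjugation by a group element is an isometry for $\|\cdot\|_{\rn}^{(m)}$ (so that $\sigma$ does extend and $\Psi$ is an algebra map) and the freeness assertion of Lemma \ref{lem-free} (so that $\Psi$ is bijective). Thus the main potential obstacle — controlling the completion of the crossed-product structure — has been dealt with in advance, and the corollary follows.
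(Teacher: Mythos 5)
Your proof is correct and follows essentially the same route as the paper: the paper's (very terse) proof likewise takes the map induced by the inclusion $D_{\rn}(\Gpm)\subset D_{r_m}(G)$ of Lemma \ref{lem-free} and observes it is a bijective $K$-algebra homomorphism, which is exactly what you verify in detail. Your explicit check of the crossed-product relations via the isometry of conjugation and the basis-to-basis argument via Lemma \ref{lem-free} just spells out what the paper leaves implicit.
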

\begin{proof}
The preceding lemma gives a $K$-algebra homomorphism
$D_{\rn}(\Gpm)\ast_{\sigma,\tau} G/\Gpm\rightarrow D_{r_m}(G)$
induced from the inclusion $D_{r_0}(\Gpm)\subset D_{r_m}(G)$ which
is bijective.

\end{proof}

Remark: In the case $L=\bbQ_p$ this responds to an issue raised in
\cite{AW}, 1.5.

\end{para}

\begin{para}
We fix $m\geq 1$ and consider the filtration on the Banach algebra
$D_{\rn}(G^{p^m})$ induced by the quotient norm $q^m_{r_0}$ from
5.10. The $[L:\bbQ_p]d$ elements $h_{ij}^{p^m}$ are a minimal
ordered set of topological generators for the group $G^{p^m}$.
Define the $d$ elements $c_j:=h_{1j}^{p^m}-1$ and let
$\bc^{\beta}=\prod_j c_j^{\beta_j}$ for $\beta\in\bbN^d$.

\begin{lemma}\label{lem-graded}
Let $m\geq 1$. The graded ring $gr^\bullet_{r_0}D_{r_0}(\Gpm)$
equals a polynomial algebra over $gr K$ in the principal symbols
of the $d$ elements $c_j$. The $K$-vector space underlying
$D_{\rn}(\Gpm)$ is given by all series
\[\lambda=\sum_{\beta\in\bbN_0^d}d_\beta\bc^\beta\]
with $|d_\beta|{\rn}^{|\beta|}\rightarrow 0$ for
$|\beta|\rightarrow\infty$. Moreover, the norm
$q^m_{r_0}(\lambda)$ may be computed as
\[q^m_{r_0}(\lambda)=\sup_\beta q^m_{r_0}(d_\beta\bc{^\beta})=\sup_\beta|d_\beta|{\rn}^{|\beta|}.\]
\end{lemma}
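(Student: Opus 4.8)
The statement to be proved, Lemma~\ref{lem-graded}, has three parts: (a) $gr^\bullet_{r_0}D_{r_0}(G^{p^m})$ is a polynomial ring over $gr\,K$ in the principal symbols of the $d$ elements $c_j=h_{1j}^{p^m}-1$; (b) every element of $D_{r_0}(G^{p^m})$ has a unique expansion $\lambda=\sum_{\beta\in\bbN_0^d}d_\beta\bc^\beta$ with $|d_\beta|r_0^{|\beta|}\to 0$; (c) the quotient norm is computed by $q^m_{r_0}(\lambda)=\sup_\beta|d_\beta|r_0^{|\beta|}$. The key structural input is that $G^{p^m}$ is itself $L$-uniform (as recalled in 5.10, since $L_{G_0^{p^m}}=p^mL_{G_0}$), so the general description of distribution algebras of $L$-uniform groups from 5.7--5.9 applies to $G^{p^m}$ with its own minimal ordered set of topological generators $h_{ij}^{p^m}$, $1\le i\le[L:\bbQ_p]$, $1\le j\le d$, and its own family of norms $||.||_r^{(m)}$.

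First I would recall from 5.9, applied to $G^{p^m}$ in place of $G$, that for $p^{-1}<r<1$ the graded ring $gr\,D_r(G_0^{p^m})$ is a polynomial ring over $gr\,K$ in the $[L:\bbQ_p]d$ principal symbols of the elements $b_{ij}^{p^m}:=h_{ij}^{p^m}-1$, and hence $gr\,D_r(G^{p^m})$ is a commutative quotient. But we are working at the boundary radius $r_0=1/p$, at which 5.9 does not directly give a polynomial ring; however, by Lemma~\ref{lem-free} with the roles interchanged, the norm $||.||^{(0)}_{r_m}$ on $D(G_0)$ restricts to $||.||^{(m)}_{r_0}$ on $D(G_0^{p^m})$, and $r_m=\sqrt[p^m]{1/p}$ satisfies $p^{-1}<r_m<1$. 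So the graded ring of $D_{r_m}(G_0^{p^m})=D_{r_0}(G_0^{p^m})$ computed with respect to the norm $||.||^{(0)}_{r_m}=||.||^{(m)}_{r_0}$ \emph{is} a polynomial ring in the $[L:\bbQ_p]d$ symbols of the $b_{ij}^{p^m}$. The point of the lemma is that after passing to the quotient $D_{r_0}(G^{p^m})$ and using the $o_L$-structure, only the $d$ symbols corresponding to $v_1=1$, i.e. the $c_j=h_{1j}^{p^m}-1$, survive: the kernel of $D(G_0^{p^m})\to D(G^{p^m})$ is generated by the kernel of $L\otimes_{\bbQ_p}Lie(G_0^{p^m})\to Lie(G^{p^m})$ (equation~(\ref{scalar}), via \cite{SchmidtAUS}), and under (\ref{compLie}) this identifies, up to higher-order terms in the graded ring, with the relations expressing the symbols of $b_{ij}^{p^m}$ for $i\ge 2$ in terms of those of the $b_{1j}^{p^m}$. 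I would check that these relations are homogeneous of the expected degree so that passing to the associated graded precisely kills the extra $([L:\bbQ_p]-1)d$ generators, yielding (a).

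Given (a), parts (b) and (c) follow by a standard argument: since $gr\,D_{r_0}(G^{p^m})$ is a polynomial ring over $gr\,K$ in the symbols of the $c_j$, and the filtration is complete and separated (it is a Banach algebra), one lifts a $gr\,K$-basis $\{$symbols of $\bc^\beta\}$ to the topological $o_K$-module basis $\{\bc^\beta\}$ of the unit ball, obtaining the convergent expansion $\lambda=\sum_\beta d_\beta\bc^\beta$ with $|d_\beta|r_0^{|\beta|}\to 0$ and uniqueness of coefficients, and then $q^m_{r_0}(\lambda)=\sup_\beta|d_\beta|r_0^{|\beta|}$ is immediate from the fact that distinct monomials $\bc^\beta$ have linearly independent principal symbols of norm $r_0^{|\beta|}$ (this is the usual "orthogonal basis" / Schauder-basis argument for graded-polynomial Banach algebras, cf. the treatment in \cite{ST5}). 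Alternatively, one may deduce (b) and (c) directly by restricting the known expansion and norm formula on $D_{r_m}(G_0^{p^m})$ to the subalgebra generated by the $c_j$ and then passing to the quotient $D_{r_0}(G^{p^m})$, checking that the quotient norm of a series $\sum_\beta d_\beta\bc^\beta$ is again $\sup_\beta|d_\beta|r_0^{|\beta|}$ because the $\bc^\beta$ remain norm-orthogonal in the quotient.

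**Main obstacle.** The delicate point is part (a): verifying that exactly the $d$ generators $c_j=h_{1j}^{p^m}-1$ (those attached to $v_1=1$) descend to polynomial generators of $gr^\bullet_{r_0}D_{r_0}(G^{p^m})$ over $gr\,K$, i.e. that the kernel of $D_{r_0}(G_0^{p^m})\to D_{r_0}(G^{p^m})$ is, on the level of associated graded rings, generated by the homogeneous elements expressing the symbols of $b_{ij}^{p^m}$ ($i\ge 2$) in terms of those of the $b_{1j}^{p^m}$, with no unexpected extra relations among the surviving $d$ symbols. This requires knowing the ideal-theoretic description of the kernel from \cite{SchmidtAUS} (Lemma~5.1, i.e. equation~(\ref{scalar})) and tracking it through the logarithm identification (\ref{compLie}) carefully; the boundary radius $r_0=p^{-1}$ means one must invoke Lemma~\ref{lem-free} to move to the strictly-interior radius $r_m$ where 5.9 applies, rather than arguing directly at $r_0$. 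Everything else is routine completeness/separatedness bookkeeping for complete filtered rings.
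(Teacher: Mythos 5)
Your proposal is correct and follows essentially the same route as the paper's own proof: the paper likewise uses the fact (from the proof of Lemma \ref{lem-free}) that $\|\cdot\|^{(0)}_{r_m}$ restricts to $\|\cdot\|^{(m)}_{r_0}$ on $D(G_0^{p^m})$, deduces from the resulting inclusion $gr_{r_0}D_{r_0}(G_0^{p^m})\subset gr_{r_m}D_{r_m}(G_0)$ (left exactness of $gr$, commutativity since $r_m>p^{-1}$) that $gr_{r_0}D_{r_0}(G_0^{p^m})$ is polynomial in the symbols of the $h_{ij}^{p^m}-1$, and then passes to the quotient $D_{r_0}(G^{p^m})$ by arguing as in \cite{SchmidtAUS}, Prop.\ 5.6/9 --- precisely the elimination of the $i\geq 2$ symbols you outline, with the series expansion and the formula for $q^m_{r_0}$ then following from the graded structure as you say. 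The only caveat is that your shorthand $D_{r_m}(G_0^{p^m})=D_{r_0}(G_0^{p^m})$ is not literally an equality of Banach algebras (these are different completions); what is actually used, and what you evidently intend, is that $D_{r_0}(G_0^{p^m})$ is the closure of $D(G_0^{p^m})$ inside $D_{r_m}(G_0)$, so that its graded ring embeds into the commutative polynomial graded ring of the latter.
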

\begin{proof}
We have already remarked (proof of lemma \ref{lem-free}) that
$||.||_{r_m}^{(0)}$ restricts to $||.||_{\rn}^{(m)}$ on the
subring $D(\Gzpm)\subset D(G_0)$ from which we obtain an inclusion
of rings
\[ gr_{\rn}D_{\rn}(\Gzpm)\subset gr_{r_m}D_{r_m}(G_0)\]
by left exactness of the functor $gr$. Since $m\geq 1$ we have
$p^{-1}<r_m$. Thus the right-hand side is a commutative ring. It
follows that $gr_{\rn}D_{\rn}(\Gzpm)$ is a polynomial algebra over
$gr K$ in the $nd$ principal symbols of the $h_{ij}^{p^m}-1$.
Arguing as in \cite{SchmidtAUS}, Prop. 5.6/9 the quotient
$gr_{r_0}D_{\rn}(\Gpm)$ equals a polynomial algebra in the
principal symbols of the $c_j$ and the remaining statements follow
from this.

\end{proof}
Remark: The result generally does not hold for $m=0$: let
$L=\bbQ_p$ and
$$\bbZ_p[[G]]=\varprojlim_N \bbZ_p[G/N]$$ be the completed
group ring of the profinite group $G$. Here, $N$ varies over the
open normal subgroups of $G$. There is an embedding
$\bbZ_p[[G]]\hookrightarrow D_{\rn}(G)$. By one of the main
theorems of M. Lazard's work on $p$-adic analytic groups
(\cite{LazardGroupesAnalytiques}, Thm. III.2.3.3) the algebra
$gr_{\rn}\bbZ_p[[G]]$ equals the universal enveloping algebra of a
certain Lie algebra associated to the group $G$ which, generally,
is non-abelian.

\vskip8pt

Still assuming $m\geq 1$ the preceding discussion shows that the
$o_K$-submodule $F_{\rn}^0D_{\rn}(\Gpm)$ of $D_{\rn}(\Gpm)$
consists of all series
\[\lambda=\sum_{\beta\in\bbN_0^d}e_\beta (c_1/p)^{\beta_1}\cdot\cdot\cdot(c_d/p)^{\beta_d}\]
with $|e_\beta|\leq 1$ and $|e_\beta|\rightarrow 0$ for
$|\beta|\rightarrow\infty$. Mapping such a series to $\sum_\beta
(e_\beta~{\rm mod}~\pi_K) u_1^{\beta_1}\cdot\cdot\cdot
u_d^{\beta_d}$ induces an isomorphism of $k$-algebras between

\[gr^0_{\rn}
D_{\rn}(\Gpm)=F_{\rn}^0D_{\rn}(\Gpm)/\pi_KF_{\rn}^0D_{\rn}(\Gpm)\]

and the polynomial algebra over $k$ in the variables
$u_1,...,u_d$. We see that the $o_K$-subalgebra
$F_{\rn}^0D_{\rn}(\Gpm)$ is a $\pi_K$-adically separated and
complete lattice in the $K$-algebra $ D_{\rn}(\Gpm)$. Moreover,
the $\pi_K$-adic reduction of this lattice is endowed with a
positive and hence complete split $\bbZ$-filtration given by total
degree with respect to the variables $u_j$. We conclude that
$D_{\rn}(\Gpm)$ is a {\it complete doubly filtered $K$-algebra} in
the sense of \cite{AW}, Def. 3.1 with
\begin{numequation}\label{equ-Gr} Gr(D_{\rn}(\Gpm)):=gr (gr^0_{\rn}
D_{\rn}(\Gpm))\simeq k[u_1,...,u_d].\end{numequation}
\end{para}

\begin{para}
Let $G$ be a pro-$p$ group which is $L$-uniform. The projective
system of noetherian $K$-Banach algebras $D_{r_m}(G)$ for $m\geq
1$ defines the so-called {\it Fr\'echet-Stein structure} of
$D(G)$. Without recalling the precise definition from \cite{ST5}
this essentially means that the natural map

\[D(G)\car\varprojlim_{m\geq 1} D_{r_m}(G)\]

is an isomorphism of topological $K$-algebras (where the
right-hand side is endowed with the projective limit topology) and
that the transition maps in the projective system are flat ring
homomorphisms. This structure gives rise to a well-behaved abelian
full subcategory $\sC_{G}$ of the (left) $D(G)$-modules, the {\it
coadmissible modules}. By definition, an abstract $D(G)$-module
$M$ is coadmissible if
\begin{itemize}
    \item[(i)] $M_m:=D_{r_m}(G)\otimes_{D(G)}M$ is finitely
    generated over $D_{r_m}(G)$ for all $m\geq 1$;
    \item[(ii)] the natural map $M\car\varprojlim_m M_m$ is an
    isomorphism.
\end{itemize}

The system $\{M_m\}_m$ is sometimes called the {\it coherent
sheaf} associated to $M$. In fact, the projective limit functor
induces an equivalence of categories between the projective
systems $(M_m)_m$ of finitely generated $D_{r_m}(G)$-modules $M_m$
with the property $$D_{r_m}(G)\otimes_{D_{r_{m+1}}(G)} M_{m+1}\car
M_m$$ and $\sC_G.$ To give an example, any $D(G)$-module which is
finitely presented as $D(G)$-module is coadmissible.

\vskip8pt

Remarks: The theory of Fr\'echet-Stein algebras is modelled
according to the example $G=\bbZ_p$, the additive group of
$p$-adic integers. In this case, the Fourier isomorphism of Y.
Amice (\cite{Amice2}) identifies $D(\bbZ_p)$ with the ring of
holomorphic functions on the rigid analytic open unit disc over
$K$. The latter is a quasi-Stein space in the sense of R. Kiehl
(\cite{Kiehl}) and the Fourier isomorphism identifies the category
of coadmissible modules with the coherent module sheaves on the
disc.

\end{para}

\begin{para}
For the definition of an {\it admissible locally analytic
representation} as well as the foundations of the theory of such
representations we refer to \cite{ST5}. However, we at least
recall ([loc.cit.], Thm. 6.3) that the abelian category $Rep(G)$
of admissible locally analytic $G$-representations over $K$ is
(essentially by definition of admissibility) anti-equivalent to
$\sC_{G}$,
$$Rep(G)\car\sC_{G}.$$

The functor is given by sending a representation $V$ to its
so-called {\it strong dual} $M:=V'_b$. As a locally convex
$K$-vector space $V'_b$ equals the continuous dual of the locally
convex vector space $V$ endowed with a certain strong topology
(see \cite{NFA} for all terminology from non-archimedean
functional analysis). The contragredient $G$-representation on
$V'_b$ extends naturally to a $D(G)$-module structure giving the
coadmissible module.
\end{para}

\section{An application of an isomorphism of Lazard}\label{sect-lazard}
In this section we assume $p\neq 2$.
\begin{para} Let $G$ be a uniform pro-$p$ group. Its integral $\bbZ_p$-Lie algebra $L_G$ was first mentioned by M. Lazard
(\cite{Lazard65}, Ex. III. 2.1.10). We require in the following
some information on the relation of $L_G$ with the group ring
$\bbZ_p[G]$ of $G$ and we therefore recall parts of Lazard's
notions and results. We, however, do not recall the standard
definitions of filtrations, valuations etc. (cf. loc.cit., I.1).

\vskip8pt

To start with, the group ring $\bbZ_p[G]$ has a canonical
filtration defined as the lower bound of all filtrations $\omega$
of the $\bbZ_p$-algebra $\bbZ_p[G]$ such that
$$\omega(g-1)\geq \omega(g)$$
for all $g\in G$ (loc.cit., III.2.3.1.2). 
Since $G$ is a uniform pro-$p$ group $\omega$ is in fact a
valuation of the $\bbZ_p$-module $\bbZ_p[G]$ which allows to
define its so-called {\it saturation} ${\rm Sat~}\bbZ_p[G]$
(loc.cit., I.2.2.11). The latter equals the completion of
$$\{ y\in \bbQ_p[G] |~
\tilde{\omega}(y)\geq 0\}$$ in the topology defined by
$\tilde{\omega}$ and is an associative algebra containing
$\bbZ_p[G].$ Note here that the valuation $\omega$ on $\bbZ_p[G]$
is extended to a map $\tilde{\omega}$ on
$\bbQ_p[G]=\bbQ_p\otimes_{\bbZ_p} \bbZ_p[G]$ by
$$\tilde{\omega}(\lambda^{-1}\otimes m):=\omega(m)-v_p(\lambda)$$
where $v_p(.)$ equals the $p$-adic valuation on $\bbZ_p$.

\vskip8pt

On the other hand, the Lie algebra $L_G$ is a finite free
$\bbZ_p$-module. Fixing a basis and using the valuation $v_p$ on
coefficients it is a valued $\bbZ_p$-module. This valuation
induces a valuation on the universal enveloping algebra $U(L_G)$
(loc.cit., IV. 2.2.1) and the corresponding saturation ${\rm Sat~}
U(L_G)$ does not depend on the choice of valuation of $L_G$. Put
$\frh:=\frac{1}{p}L_G$. This is a $\bbZ_p$-Lie algebra since $L_G$
is powerful. Let
$$\widehat{U(\frh)}=(\varprojlim_i
U(\frh)/p^{i}U(\frh))
$$ be the $p$-adic completion of $U(\frh)$. The inclusion $L_G\subset\frh$ induces an isomorphism
of ${\rm Sat~}U(L_G)$ with $\widehat{U(\frh)}$ which, combined
with the isomorphism loc.cit., IV. 3.2.5, gives an isomorphism
\begin{numequation}\label{equ-Lazardiso}\cL_G: \widehat{U(\frh)}\car{\rm Sat~}\bbZ_p[G].\end{numequation}
By loc.cit., Cor. 3.2.4 it is given on elements $g\in L_G$ via
$\cL_G(g):=Log (g)$ where

$$Log(X)=(X-1)-(X-1)^2/2+(X-3)^3/3-\cdot\cdot\cdot$$ 
is the usual logarithm power series (loc.cit., III.1.1.5.2).
\vskip8pt

Remark: A concise account of the integral Lazard Lie algebra and
its various relations to the group ring can also be found in
\cite{HKN}.

\vskip8pt

Let
\[
\widehat{U(\frh)}_{\bbQ_p}:=\widehat{U(\frh)}\otimes_{\bbZ_p}\bbQ_p.
\]
\begin{prop}\label{prop-Lazard} The map $\cL_G$ and the inclusion $\bbZ_p[G]\subset D(G,\bbQ_p)$ induce an
isomorphism of $\bbQ_p$-Banach algebras
\begin{numequation}\label{equ-Lazard} \cL_G:
\widehat{U(\frh)}_{\bbQ_p}\car
D_{r_0}(G,\bbQ_p).\end{numequation}compatible with the canonical
maps $\frh\hookrightarrow Lie(G)\hookrightarrow D(G,\bbQ_p).$
\end{prop}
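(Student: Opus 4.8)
The plan is to deduce Proposition \ref{prop-Lazard} from the isomorphism (\ref{equ-Lazardiso}) of Lazard together with the explicit description of the Banach algebra $D_{r_0}(G,\bbQ_p)$ given in 5.10--5.12. First I would recall that $D_{r_0}(G,\bbQ_p)$ is the completion of $\bbQ_p[G]$ (equivalently of $D(G,\bbQ_p)$) with respect to the norm $||.||_{r_0}$ with $r_0=1/p$, and that by Lemma \ref{lem-graded} (applied with $m=0$ --- or rather, since that lemma excludes $m=0$, by the direct analysis at the end of 5.12, or by Lazard's theorem as recalled in the Remark after Lemma \ref{lem-graded}) the gauge norm $||.||_{r_0}$ on $\bbZ_p[G]$ agrees, up to the logarithmic change of coordinates, with the valuation $\omega$. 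More precisely, the key point is the comparison of norms: the filtration $F^\bullet_{r_0}D_{r_0}(G,\bbQ_p)$ restricted to $\bbZ_p[G]$ coincides with the canonical filtration $\omega$ of Lazard (both are characterized by the property $\omega(g-1)\geq\omega(g)$ for $g\in G$, together with the uniform pro-$p$ structure), so that the $||.||_{r_0}$-completion of $\bbZ_p[G]\otimes\bbQ_p$ is precisely $\mathrm{Sat}\,\bbZ_p[G]\otimes_{\bbZ_p}\bbQ_p$. This identifies $D_{r_0}(G,\bbQ_p)$ with $(\mathrm{Sat}\,\bbZ_p[G])_{\bbQ_p}$.

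Next I would observe that the left-hand side of (\ref{equ-Lazardiso}) rationalizes correctly: since $\widehat{U(\frh)}$ is by construction the $p$-adic completion of $U(\frh)$ and $U(\frh)$ is a free $\bbZ_p$-module, $\widehat{U(\frh)}\otimes_{\bbZ_p}\bbQ_p=\widehat{U(\frh)}_{\bbQ_p}$ is again a $\bbQ_p$-Banach algebra, and Lazard's isomorphism $\cL_G\colon\widehat{U(\frh)}\car\mathrm{Sat}\,\bbZ_p[G]$ is an isometry for the respective valuations, hence extends to an isomorphism of $\bbQ_p$-Banach algebras after inverting $p$. Composing with the identification of the previous paragraph gives the desired isomorphism
\[
\cL_G\colon\widehat{U(\frh)}_{\bbQ_p}\car D_{r_0}(G,\bbQ_p).
\]
For the compatibility assertion I would trace through the formulas: on $g\in L_G\subset\frh$ the map $\cL_G$ is $g\mapsto\mathrm{Log}(g)$ (loc.cit., Cor. 3.2.4, as recalled), while the inclusion $Lie(G)\hookrightarrow D(G,\bbQ_p)$ sends the image of $L_G$ via the logarithm series $\log(1+b)$ in the group-like coordinates, cf. (\ref{compLie}); since $\mathrm{Log}(X)=\log(X)$ as power series (both equal $(X-1)-(X-1)^2/2+\cdots$), the two maps $\frh\hookrightarrow Lie(G)\hookrightarrow D(G,\bbQ_p)$ and $\frh\subset\widehat{U(\frh)}_{\bbQ_p}\xrightarrow{\cL_G}D_{r_0}(G,\bbQ_p)$ agree on $L_G$, hence on $\frh=\tfrac1p L_G$ by $\bbQ_p$-linearity, hence everywhere by multiplicativity.

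The main obstacle I anticipate is the norm comparison in the first paragraph: one must verify carefully that Lazard's canonical filtration $\omega$ on $\bbZ_p[G]$ and the valuation attached to $||.||_{r_0}$ on $D_{r_0}(G,\bbQ_p)$ really are the same on $\bbZ_p[G]$. The cleanest route is probably to use the explicit description in 5.12: the lattice $F^0_{r_0}D_{r_0}(G)$ is the $p$-adically completed $o_K$-span of the monomials $(c_1/p)^{\beta_1}\cdots(c_d/p)^{\beta_d}$ in the shifted coordinates $c_j=h_{1j}-1$ (with $m=0$; here one does need to check directly, since Lemma \ref{lem-graded} is stated for $m\geq1$, that the analogous description survives at $m=0$ --- which is exactly what Lazard's theorem III.2.3.3 furnishes, via the isomorphism $gr_{r_0}\bbZ_p[[G]]\simeq U(\mathrm{gr}\,G)$). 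One then matches this with Lazard's description of $\mathrm{Sat}\,\bbZ_p[G]$ as the $\tilde\omega$-completion of $\{y\in\bbQ_p[G]:\tilde\omega(y)\geq0\}$. Everything else --- the rationalization, the identification of power series, the multiplicativity --- is routine once this comparison is in hand.
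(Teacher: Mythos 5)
Your proposal is correct, and it reaches the statement by a route that is organizationally a bit different from the paper's, though it turns on the same essential fact. The paper's proof quotes Ardakov's thesis (\cite{ArdakovPHD}, Thm.\ 5.1.4; cf.\ \cite{AW}, Thm.\ 10.4), which already packages the Lazard isomorphism as a Banach-algebra isomorphism of $\widehat{U(\frh)}_{\bbQ_p}$ onto the completion of $\bbQ_p\otimes_{\bbZ_p}\bbZ_p[[G]]$ for the norm of \cite{DDMS}, chap.\ 7; all that remains there is to identify that norm with the restriction of $||.||_{r_0}$ and to use density of $\bbQ_p\otimes_{\bbZ_p}\bbZ_p[[G]]$ in $D_{r_0}(G,\bbQ_p)$, plus (\ref{compLie}) for the compatibility. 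You instead stay with the integral isomorphism (\ref{equ-Lazardiso}) onto ${\rm Sat}\,\bbZ_p[G]$, identify ${\rm Sat}\,\bbZ_p[G]$ with the unit ball $F^0_{r_0}D_{r_0}(G,\bbQ_p)$ by comparing $\tilde\omega$ with the valuation attached to $||.||_{r_0}$ on $\bbQ_p[G]$, and then invert $p$; the compatibility check via $Log$ versus (\ref{compLie}) is the same as the paper's. What your route buys is independence from the external reformulation in \cite{ArdakovPHD}; what it costs is that you must carry out the filtration comparison yourself, and here two small points deserve care. First, the phrase that both filtrations are ``characterized by $\omega(g-1)\geq\omega(g)$'' is too loose as stated: the honest argument is that the canonical filtration is the infimum of all filtrations with this property (so it is dominated by the $r_0$-filtration, which does satisfy it), while the reverse inequality needs the explicit description of the canonical filtration on monomials $p^k\bb^\alpha$, i.e.\ Lazard's Thm.\ III.2.3.3 or the explicit formulae of \cite{DDMS}, chap.\ 7 — which you do invoke as your fallback, and which is exactly the input the paper uses. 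Second, appealing to the Remark after Lemma \ref{lem-graded} for this identification is mildly circular, since that Remark itself tacitly presupposes that the restriction of $||.||_{r_0}$ to $\bbZ_p[[G]]$ is Lazard's canonical filtration; cite Lazard or \cite{DDMS} directly instead. With those adjustments your argument is complete and correct.
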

\begin{proof}
This reformulation of the Lazard isomorphism is essentially
\cite{ArdakovPHD}, Thm. 5.1.4 (compare also \cite{AW}, Thm. 10.4
and the remarks 10.5). Indeed, according to loc.cit. the map
$\cL_G$ induces an isomorphism between
$\widehat{U(\frh)}_{\bbQ_p}$ and the completion of
$\bbQ_p\Lambda_{G}:=\bbQ_p\otimes_{\bbZ_p}\bbZ_p[[G]]$ with
respect to a certain norm on $\bbQ_p\Lambda_{G}$ constructed in
\cite{DDMS}, chap. 7. By the explicit formulae given in loc.cit.
this norm equals the restriction of $||.||_{\rn}$ to
$\bbQ_p\Lambda_{G}$. But the latter space is norm-dense in
$D_{r_0}(G,\bbQ_p)$ which gives the first claim. The compatibility
with the inclusions $\frh\hookrightarrow Lie(G)\hookrightarrow
D(G,\bbQ_p)$ follows from (\ref{compLie}).

\end{proof}

\begin{cor}\label{cor-lazard}
The inverse $\cL^{-1}_G$ induces an injective group homomorphism
$$\cL^{-1}_G:G\hookrightarrow \widehat{U(\frh)}^\times.$$
\end{cor}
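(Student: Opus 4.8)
The plan is to extract the statement formally from the ring isomorphism (\ref{equ-Lazardiso}), using only that an isomorphism of unital associative rings restricts to an isomorphism on the respective groups of units. First I would record that the natural map $G\to\bbZ_p[G]$, $g\mapsto g$, is an injective homomorphism of monoids whose image consists of units, with $g^{-1}\in G$ supplying the inverse, since the elements of $G$ form a $\bbZ_p$-basis of the group ring; composing with the inclusion $\bbZ_p[G]\hookrightarrow {\rm Sat}\,\bbZ_p[G]$ recalled above then realizes $G$ as a subgroup of $({\rm Sat}\,\bbZ_p[G])^\times$.

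Next, since $\cL_G$ in (\ref{equ-Lazardiso}) is an isomorphism of associative $\bbZ_p$-algebras, it sends $1$ to $1$ and restricts to a group isomorphism $({\rm Sat}\,\bbZ_p[G])^\times\car\widehat{U(\frh)}^\times$; in particular $\cL^{-1}_G$ is multiplicative and unital. Restricting $\cL^{-1}_G$ to the subgroup $G\subseteq ({\rm Sat}\,\bbZ_p[G])^\times$ therefore produces a group homomorphism $\cL^{-1}_G\colon G\to\widehat{U(\frh)}^\times$; note that for $g\in G$ one has $\cL^{-1}_G(g)\,\cL^{-1}_G(g^{-1})=\cL^{-1}_G(gg^{-1})=1$, so the image really lies in the units of the integral algebra $\widehat{U(\frh)}$ and not merely in those of its base change $\widehat{U(\frh)}_{\bbQ_p}$. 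Injectivity is immediate: it is the composite of the injection $G\hookrightarrow ({\rm Sat}\,\bbZ_p[G])^\times$ with the bijection $\cL^{-1}_G$.

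For later use one would also observe that this embedding is compatible with the analytic picture of Proposition \ref{prop-Lazard}: under the $\bbQ_p$-version (\ref{equ-Lazard}) of $\cL_G$ the element $g\in G$ corresponds to the Dirac distribution $\delta_g\in D_{r_0}(G,\bbQ_p)^\times$, and on $g\in L_G\subseteq G$ the map $\cL^{-1}_G$ is given by the explicit power series $Log(g)$ as recalled above. I do not expect a genuine obstacle here; the only point demanding a little care — and the reason for arguing with the $\bbZ_p$-algebra isomorphism (\ref{equ-Lazardiso}) rather than directly with (\ref{equ-Lazard}) — is to see that $\cL^{-1}_G(g)$ is integral, i.e. lies in $\widehat{U(\frh)}$ itself.
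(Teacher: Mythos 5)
Your argument is correct and is essentially the paper's own proof: the paper likewise observes that the inclusion $\bbZ_p[G]\subset {\rm Sat~}\bbZ_p[G]$ identifies $G$ with a subgroup of units and then transports it through the integral isomorphism (\ref{equ-Lazardiso}), exactly as you do. Your extra remarks (units map to units under a ring isomorphism, and the reason for working with $\widehat{U(\frh)}$ rather than $\widehat{U(\frh)}_{\bbQ_p}$) merely spell out what the paper leaves implicit.
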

\begin{proof}
The inclusion $\bbZ_p[G]\subset {\rm Sat~}\bbZ_p[G]$ identifies
$G$ with a subgroup of units of the ring ${\rm Sat~}\bbZ_p[G]$.
\end{proof}

\end{para}\begin{para}\label{para-HYP}

We now turn back to the setting of section \ref{sect-BB}. Recall
the $o_L$-Lie algebra $\frg$ of the connected split reductive
group scheme $\bG$. Let $\frg_0$ equal $\frg$ but viewed over
$\bbZ_p$ and recall that $e$ is the ramification index of
$L/\bbQ_p$. We assume that we are given a $L$-uniform pro-$p$
group $G$ that satisfies the following hypothesis relative to
$\bG$:

\[\begin{array}{ccc}
  {\rm (HYP)} &  & L_{G}=\pi_L^{k}\frg_0 \\
\end{array}\]
as Lie algebras over $\bbZ_p$ for a natural number $k\geq 1$. We
additionally fix a number $m\geq 1$. In the following we are
studying the $\bbZ_p$-Lie algebra
\begin{numequation}\label{deform}
\frh:=\frac{1}{p}L_{G^{p^m}}=\frac{1}{p}p^mL_{G}=p^m\frac{1}{p}L_{G}=\pi_L^{(m-1)e+k}\frg_0.
\end{numequation}

Abbreviating $n:=(m-1)e+k \geq 1$, the Lazard isomorphism applied
to $G^{p^m}$ is an isomorphism
\begin{numequation}\label{ultimate}
\cL_{G_0^{p^m}}:
\widehat{U(\pi_L^n\frg_0)}_K=\widehat{U(\frh)}_K\car
D_{r_0}(G_0^{p^m}). \end{numequation}

Fix an ordered $o_L$-basis $\fry_1,...,\fry_d$ of $\frg$ and a
$\bbZ_p$-basis $v_1,...,v_{[L:\bbQ_p]}$ of $o_L$ with $v_1=1$.
Given $\al\in\bbN_0^{[L:\bbQ_p]d}$, we form the elements
$$\frY^\al:=(v_1\fry_1)^{\al_{11}}\cdot\cdot\cdot(v_{[L:\bbQ_p]}\fry_d)^{\al_{[L:\bbQ_p]d}}$$
inside $U(\frg)$. The $K$-Banach algebra
$\widehat{U(\pi_L^n\frg_0)}_{K}=\widehat{U(\pi_L^n\frg_0)}\otimes_{\bbZ_p}
K$ is given by all series
\[ \lambda:=\sum_{\al\in\bbN_0^{[L:\bbQ_p]d}} d_\al \frY^\al \]
where $d_\al\in K$ with $|d_\al|\cdot
|\pi_L|^{-n\cdot|\al|}\rightarrow 0$ for
$|\alpha|\rightarrow\infty$.
It contains the kernel $J$ of the map
\[\frg_0\otimes_{\bbZ_p}o_L\rightarrow\frg,~~\frx\otimes a\mapsto
a\frx.\] If the set $J$ injects into some algebra we denote by
$\langle J\rangle$ the two-sided ideal generated by $J$ in this
algebra. Reasoning as in (the proof of) \cite{SchmidtVECT}, Prop.
2.4 the quotient algebra $\widehat{U(\pi_L^n\frg_0)}_{o_L}/\langle
J\rangle$ is given by the $\pi_L$-adically complete algebra of all
series
\[ \lambda:=\sum_{\beta\in\bbN_0^{d}} e_\beta \frY'^\beta \]
where $e_\beta\in o_L$ with $|e_\beta|\cdot |\pi_L|^{-n\cdot
|\beta|}\rightarrow 0$ for $|\beta|\rightarrow\infty$
and
$\frY'^{\beta}:=\fry_1^{\beta_{1}}\cdot\cdot\cdot\fry_d^{\beta_{d}}$.
It therefore coincides with
$\widehat{U(\pi_L^n\frg)}=\widehat{U(\frg)_{n}}.$ Now taking into
account the discussion preceding the map (\ref{scalar}), the
Lazard isomorphism (\ref{ultimate}) factores into an isomorphism
\begin{numequation}\label{ultimatequot}
\cL_{G^{p^m}}:
\widehat{U(\frg)_{n,}}_{K}=\widehat{U(\frg)_{n}}\otimes_{o_L}K=\widehat{U(\pi_L^n\frg_0)}_{K}/\langle
J\rangle\car D_{r_0}(G_0^{p^m})/\langle J\rangle=D_{r_0}(G^{p^m})
\end{numequation}
and the inverse $\cL^{-1}_{G^{p^m}}$ yields an injective group
homomorphism $G^{p^m}\hookrightarrow
\widehat{U(\frg)_{n}}^\times.$ For future reference we summarize
this discussion in a proposition.
\begin{prop}\label{prop-enddiscussion}
Let $G$ be a $L$-uniform pro-$p$ group such that
$L_G=\pi_L^{k}\frg_0$ for some $k\geq 1$. Let $m\geq 1$ and put
$n:= (m-1)e+k$. The Lazard isomorphism induces an algebra
isomorphism
$$\cL_{G^{p^m}}: \widehat{U(\frg)_{n,}}_K\car D_{r_0}(G^{p^m})$$
whose inverse yields an injective group homomorphism
$G^{p^m}\hookrightarrow \widehat{U(\frg)_{n}}^\times.$
\end{prop}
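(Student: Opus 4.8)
The plan is to specialize the Lazard isomorphism (Proposition~\ref{prop-Lazard}) to the open subgroup $\Gpm$ and then pass to the quotient by the scalar-relations ideal $\langle J\rangle$ on both sides. Note first that $\Gpm$ is again $L$-uniform (section~\ref{sect-dis}), so that $\frh:=\frac{1}{p}L_{\Gpm}$ is a $\bbZ_p$-Lie algebra, and that~(\ref{deform}) identifies it with $\pi_L^{(m-1)e+k}\frg_0=\pi_L^{n}\frg_0$. Applying Proposition~\ref{prop-Lazard} to $\Gpm$ then yields the isomorphism of $K$-Banach algebras
$$\cL_{\Gzpm}: \widehat{U(\pi_L^{n}\frg_0)}_K=\widehat{U(\frh)}_K\;\car\;D_{r_0}(\Gzpm)$$
of~(\ref{ultimate}), compatible with the canonical maps $\frh\hra Lie(\Gpm)\hra D(\Gpm,\bbQ_p)$.

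Next I would identify the two quotients by $\langle J\rangle$, the two-sided ideal generated by the kernel $J$ of the linearization $\frg_0\otimes_{\bbZ_p}o_L\to\frg$, $\frx\otimes a\mapsto a\frx$. On the distribution side, the discussion preceding~(\ref{scalar}) (via \cite{SchmidtAUS}, Lemma~5.1) shows that $D(\Gzpm)\to D(\Gpm)$ is precisely the quotient by the ideal generated by $J$, and this descends to the Banach completions for the norm $q^{m}_{r_0}$, giving $D_{r_0}(\Gzpm)/\langle J\rangle\simeq D_{r_0}(\Gpm)$. On the enveloping-algebra side I would argue as in the proof of \cite{SchmidtVECT}, Prop.~2.4: writing a general element of $\widehat{U(\pi_L^{n}\frg_0)}_{o_L}$ as a convergent series $\sum_{\al}d_\al\frY^{\al}$ over the PBW monomials $\frY^{\al}$ indexed by $\bbN_0^{[L:\bbQ_p]d}$, the relations in $J$ identify each $v_i\fry_j$ with $v_i\cdot\fry_j$ and collapse such series, with controlled coefficients, onto series $\sum_{\beta}e_\beta\frY'^{\beta}$ indexed by $\bbN_0^{d}$ with $e_\beta\in o_L$ and $|e_\beta|\,|\pi_L|^{-n|\beta|}\to 0$ for $|\beta|\to\infty$; in other words $\widehat{U(\pi_L^{n}\frg_0)}_{o_L}/\langle J\rangle\simeq\widehat{U(\pi_L^{n}\frg)}=\widehat{U(\frg)_n}$.

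It then remains to check that $\cL_{\Gzpm}$ matches up the two copies of $\langle J\rangle$, and the compatibility clause above does exactly this: $\cL_{\Gzpm}$ carries the image of $\frg_0\otimes_{\bbZ_p}o_L$ inside $\widehat{U(\pi_L^{n}\frg_0)}_K$ onto the corresponding image inside $D_{r_0}(\Gzpm)$, so the two-sided ideals they generate correspond and $\cL_{\Gzpm}$ descends to the asserted isomorphism $\cL_{\Gpm}:\widehat{U(\frg)_n}_K\car D_{r_0}(\Gpm)$. For the remaining assertion, Corollary~\ref{cor-lazard} applied to $\Gpm$ embeds $\Gpm$ into $\widehat{U(\pi_L^{n}\frg_0)}^\times$ through $\bbZ_p[\Gpm]\subset{\rm Sat}\,\bbZ_p[\Gpm]\simeq\widehat{U(\frh)}$; composing with the quotient map $\widehat{U(\pi_L^{n}\frg_0)}^\times\to\widehat{U(\frg)_n}^\times$ yields the desired group homomorphism, and since its further composite with the inclusion $\widehat{U(\frg)_n}^\times\hra D_{r_0}(\Gpm)^\times$ is the injective Dirac map $g\mapsto\delta_g$, the homomorphism $\Gpm\to\widehat{U(\frg)_n}^\times$ is itself injective.

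The step I expect to be the main obstacle is the enveloping-algebra identification in the second paragraph: one must verify that quotienting the $p$-adically completed $n$-fold deformed enveloping algebra $\widehat{U(\pi_L^{n}\frg_0)}_{o_L}$ by the scalar relations $\langle J\rangle$ really recovers $\widehat{U(\frg)_n}$ with its correct Banach norm, rather than something larger or smaller. Since completion does not commute with taking quotients in general, this forces one to control the coefficient growth conditions as one passes from series over $\bbN_0^{[L:\bbQ_p]d}$ to series over $\bbN_0^{d}$; the appearance of the exponent $n=(m-1)e+k$ from~(\ref{deform}), combining the level $k$ with the ramification index $e$, is the source of the bookkeeping here.
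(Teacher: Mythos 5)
Your proposal is correct and follows essentially the same route as the paper: the proposition there is just a summary of the discussion in \ref{para-HYP}, which likewise specializes Proposition \ref{prop-Lazard} (and Corollary \ref{cor-lazard}) to $G^{p^m}$ using the identification $\frh=\frac{1}{p}L_{G^{p^m}}=\pi_L^{n}\frg_0$ of (\ref{deform}), identifies $\widehat{U(\pi_L^n\frg_0)}_{o_L}/\langle J\rangle$ with $\widehat{U(\frg)_n}$ by the series argument of \cite{SchmidtVECT}, Prop.~2.4, and passes to the quotient on the distribution side via the discussion preceding (\ref{scalar}). Your extra remarks on matching the two copies of $\langle J\rangle$ and on deducing injectivity of $G^{p^m}\hookrightarrow\widehat{U(\frg)_n}^{\times}$ from the Dirac map merely make explicit what the paper leaves implicit.
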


\begin{para}
Let us keep the assumptions on the group $G$ appearing in the
preceding proposition. Let $Z(\frg_K)$ be the center of the ring
$U(\frg_K)$ and let $\theta: Z(\frg_K)\ra K$ be an algebra
homomorphism. We suppose that $Z(\frg_K)\hookrightarrow D(G)$ has
image in the center of the ring $D(G)$. For example, this is true
whenever $G$ is an open subgroup in the group of rational points
of a connected algebraic group over $L$ (\cite{ST4}, Prop. 3.7).
We then have the central reduction
$$D(G)_\theta:=D(G)\otimes_{Z(\frg_K),\theta} K$$ and similarly
for any $D_r(G)$. We remark that, since the ring $Z(\frg_K)$ is
noetherian, the ideal of $D(G)$ (and $D_r(G)$) generated by
$\ker\theta$ is finitely generated and hence closed (\cite{ST5},
remark before Prop. 3.6). It follows from loc.cit., Prop. 3.6 that
\begin{numequation}\label{equ-centred}D(G)_\theta\car \varprojlim_r
D_r(G)_\theta.\end{numequation}

\vskip8pt

On the other hand, given a weight
$\lambda\in\Hom_{o_L}(\pi_L^n\frt,o_K)$ we have the central
reduction $\hUlmK$ of $\hUmK$ from section \ref{sect-DC}. We
suppose that $\theta$ is related to $\lambda_K\in\frt^*_K$ via the
(untwisted) Harish-Chandra homomorphism
$$\phi_K: Z(\frg_K)\car S(\frt_K)^{W,\bullet}$$ (\cite{Dixmier},
Thm. 7.4.5). Here, $(W,\bullet)$ refers to the usual dot-action
$$w\bullet \mu:=w(\mu+\rho)-\rho$$ of the Weyl group $W$ on a
weight $\mu\in \frt_K$. According to (\cite{DemazureGabriel},
II.\S6.1.5) and since $char(L)=0$, we have
$U(\frg)^\bG\otimes_{o_L} L =Z(\frg_L)$ which means that $\phi_K$
comes by base change via $o_L\rightarrow K$ from the homomorphism
$\phi: U(\frg)^\bG\rightarrow S(\frt)$ of section \ref{sect-DC}.
This justifies our notation.
\begin{cor}\label{cor-enddiscussion2}
The isomorphism $\cL_{G^{p^m}}$ of the preceding proposition
factores into an isomorphism
$$\hUlmK\car D_{r_0}(G^{p^m})_\theta.$$
\end{cor}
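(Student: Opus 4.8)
The plan is to push the statement through the Lazard isomorphism $\cL_{G^{p^m}}\colon\widehat{U(\frg)_{n,}}_K\car D_{r_0}(G^{p^m})$ of Proposition~\ref{prop-enddiscussion}: I want to identify, on each side, the two-sided ideal generated by $\ker\theta$, and to recognise the two resulting quotients as $\hUlmK$ and $D_{r_0}(G^{p^m})_\theta$. The first step is to observe that $\cL_{G^{p^m}}$ is compatible with the canonical embeddings of $U(\frg_K)=U(\frg)_n\otimes_{o_L}K$ into $\widehat{U(\frg)_{n,}}_K$ and into $D_{r_0}(G^{p^m})$; this is how it is constructed (Proposition~\ref{prop-enddiscussion}), ultimately because of the compatibility of the Lazard isomorphism with the Lie algebra inclusions recorded in Proposition~\ref{prop-Lazard} and formula (\ref{compLie}). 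In particular $\cL_{G^{p^m}}$ fixes (the image of) $Z(\frg_K)$, which by (\cite{DemazureGabriel}, II.\S6.1.5) and $char(L)=0$ coincides with $U(\frg)^\bG\otimes_{o_L}K=\Uinvm\otimes_{o_L}K$, and which — $U(\frg)^\bG$ being central in $U(\frg)$ — is central in both Banach algebras. Hence in each of them the left ideal generated by $\ker\theta$ is two-sided and, $Z(\frg_K)$ being noetherian (\cite{ST5}, remark before Prop.~3.6), finitely generated, therefore closed. So $\cL_{G^{p^m}}$ descends to a topological algebra isomorphism
\[
\widehat{U(\frg)_{n,}}_K\big/\langle\ker\theta\rangle\;\car\;D_{r_0}(G^{p^m})\big/\langle\ker\theta\rangle\;=\;D_{r_0}(G^{p^m})_\theta .
\]

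Next I would identify the left-hand side with $\hUlmK$. By construction (Section~\ref{sect-DC}) the algebra $\hUlm$ is the $p$-adic completion of $\Ulm=\Ugm\otimes_{\Uinvm,\lambda}o_K=\Ugm\big/\langle\ker(\lambda\circ\phi_n)\rangle$, so after inverting $p$ — at which point the $n$-th deformation disappears — and using that the formation of such a quotient by a finitely generated ideal commutes with completion and with $\otimes_{o_L}K$, one obtains
\[
\hUlmK\;=\;\widehat{U(\frg)_{n,}}_K\big/\big\langle\ker\big((\lambda\circ\phi_n)\otimes_{o_L}K\big)\big\rangle .
\]
Now $\Uinvm\otimes_{o_L}K=U(\frg)^\bG\otimes_{o_L}K=Z(\frg_K)$ as above, and since $\phi_n$ extends the Harish-Chandra type homomorphism $\phi$ (see (\ref{equ-phi})), whose base change to $K$ is $\phi_K$, the character $(\lambda\circ\phi_n)\otimes_{o_L}K$ of $Z(\frg_K)$ is exactly $\lambda_K\circ\phi_K=\theta$. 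Thus the defining ideal of $\hUlmK$ is $\langle\ker\theta\rangle$ and $\hUlmK=\widehat{U(\frg)_{n,}}_K/\langle\ker\theta\rangle$, which together with the displayed isomorphism above proves the corollary.

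The proof is therefore largely formal; the one point that needs real care is this last identification of characters, namely checking that the central character $\theta$ is precisely the one through which the reduction $\hUlmK$ is formed. This rests on the three ingredients already in place earlier in the paper: the base change identity $U(\frg)^\bG\otimes_{o_L}L=Z(\frg_L)$, the fact (set up before the statement of the corollary) that $\theta=\lambda_K\circ\phi_K$ via the untwisted Harish-Chandra isomorphism, and — underlying the construction of the deformed map $\widehat{\phi_n}$ — the compatibility result (\cite{AW}, Lemma~4.10). Keeping track of these, together with the compatibility of $\cL_{G^{p^m}}$ with $U(\frg_K)$ so that it fixes $Z(\frg_K)$, is all that is needed.
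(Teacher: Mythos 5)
Your argument is correct, and it reaches the corollary by a genuinely different route at the one technical point on which the paper spends most of its proof. You present $\hUlmK$ directly as $\widehat{U(\frg)_n}_{,K}/\langle\ker\theta\rangle$ by forming the central reduction \emph{before} completing and then invoking the compatibility of $p$-adic completion with quotients by finitely generated (centrally generated) ideals over the noetherian ring $U(\frg)_n$, together with the observations that $(U(\frg)^\bG)_n\otimes_{o_L}K=Z(\frg_K)$ and that $\lambda\circ\phi_n$ becomes $\theta=\lambda_K\circ\phi_K$ after base change; the corollary then follows because $\cL_{G^{p^m}}$ intertwines the two copies of $U(\frg_K)$ and hence matches the ideals $\langle\ker\theta\rangle$. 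The paper instead quotes \cite{AW}, Lemma 6.5 in the form $\hUlmK=\widehat{U(\frg)_n}\otimes_{\hat{R}_K,\lambda}K$ with $R=(U(\frg)^\bG)_n$, and must therefore identify $\hat{R}_K$ with the closure of $Z(\frg_K)$ inside $\widehat{U(\frg)_n}_{,K}$; this is where its nontrivial claim enters that the subspace topology on $R$ is the $p$-adic one, proved via surjectivity of $(U(\frg)^\bG)_n\to(U(\frg)_n)^\bG$ using the split deformed PBW filtration, after which closedness of the finitely generated ideal gives $\otimes_{\overline{R_K}}K=\otimes_{R_K}K$. Your version bypasses the completed-invariants question entirely, at the price of having to justify the step ``quotient commutes with completion''; you should cite this explicitly (noetherianity of $U(\frg)_n=U(\pi_L^n\frg)$ and closedness of finitely generated ideals in the $p$-adically complete noetherian rings, cf. \cite{BerthelotDI}, (3.2.3)), and note that the generators $x-\lambda(\phi_n(x))$, $x\in(U(\frg)^\bG)_n$, are central so the left and two-sided ideals agree. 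Two small cautions: the parenthesis ``at which point the $n$-th deformation disappears'' is misleading, since $\widehat{U(\frg)_n}_{,K}$ still depends on $n$; what disappears is only the deformation of the invariant subalgebra and of the character. Also, closedness of $\langle\ker\theta\rangle$ is not needed to descend the isomorphism (that step is purely algebraic); it only matters if you want the quotients, with their quotient norms, to be Banach algebras and the descended map to be topological.
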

\begin{proof}
Let $R:=(U(\frg)^\bG)_n$. By \cite{AW}, Lemma 6.5 we have the
canonical isomorphism
$$\hUlmK = \widehat{U(\frg)_n}\otimes_{\hat{R}_K,\lambda} K \hskip25pt {\rm (*)} $$ where the
completions are taken, as always, with respect to $p$-adic
topologies. We claim that the induced topology on
$R=(U(\frg)^\bG)_n \subset U(\frg)_n$ is again the $p$-adic
topology. Indeed, since $\bG$ acts $o_L$-linearly on $U(\frg)$ via
its adjoint action, this is obvious for $n=0$. The general case
$n>0$ is reduced to this case when we can show that the natural
map
$$(U(\frg)^{\bG})_n\hookrightarrow (U(\frg)_n)^{\bG}$$ is onto.
Let $x\in U(\frg)_n$ be $\bG$-invariant. Since the deformed
$PBW$-filtration on $U(\frg)_n$ is still a split filtration and
since $\bG$ necessarily acts by strictly filtered isomorphisms we
may assume that $x\in\pi_L^{in}F_iU(\frg)$ for some $i\geq 0$. But
then
$$x\in\pi^{in}_L (F_iU(\frg))^{\bG}=\pi^{in}_L
F_i(U(\frg)^{\bG})\in (U(\frg)^{\bG})_n.$$

By the claim the $p$-adic completion $\hat{R}$ equals the closure
of $R$ inside $\widehat{U(\frg)_n}$. This means that $\hat{R}_K$
equals the closure $\overline{R_K}$ of $R_K=Z(\frg_K)$ inside
$\widehat{U(\frg)_n}_{,K}$. So we deduce from ${\rm (*)}$ and the
preceding proposition the isomorphism
$$\hUlmK\car  D_{r_0}(G^{p^m}) \otimes_{\overline{R_K},\lambda} K =
D_{r_0}(G^{p^m}) \otimes_{R_K,\lambda} K
=D_{r_0}(G^{p^m})_\theta$$ which proves the assertion.
\end{proof}

In the next section we will apply these results in the case where
$G$ equals a certain principal congruence subgroup in the rational
points of the group scheme $\bG$.
\end{para}


\section{Congruence kernels in semisimple $p$-adic groups}
We assume in this section that $p\neq 2$.

\begin{para}
We return to the setting of section \ref{sect-DC}. Consider the
group $\bG(o_L)$ of $o_L$-valued points of the split reductive
group scheme $\bG$. For every $k\geq 1$ we have in $\bG(o_L)$ the
normal congruence subgroup
\[ G(\pi_L^{k}):=\ker\; ( \bG(o_L)\longrightarrow
\bG(o_L/\pi_L^{k}o_L) ).\]
\begin{lemma}
Let $k\geq e$. The locally $L$-analytic group $G(\pi_L^{k})$ is
$L$-uniform. The corresponding $o_L$-lattice in
$Lie(G(\pi_L^{k}))=\frg\otimes_{o_L} L$ is given by
$\pi_L^{k}\frg$.
\end{lemma}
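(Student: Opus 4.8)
The plan is to realize $G(\pi_L^{k})$ via the matrix exponential inside a linear group and then to recognize it, through Lazard's dictionary, as the uniform pro-$p$ group attached to the powerful $\bbZ_p$-Lie algebra $\pi_L^{k}\frg$.

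First I would fix a faithful representation of $\bG$ on a finite free $o_L$-module, i.e.\ a closed immersion of $o_L$-group schemes $\bG\hookrightarrow{\rm GL}_{N,o_L}$; this exists since $\bG$ is split reductive. Set $M:=M_N(o_L)$ and identify $\frg$ with its image in $M$; by smoothness of $\bG$ this image is a direct $o_L$-module summand of $M$, and $G(\pi_L^{k})=\bG(o_L)\cap(1+\pi_L^{k}M)$. Since $k\geq e$ and $p\neq 2$ the valuation $v_p(\pi_L^{k})=k/e$ satisfies $k/e\geq 1>1/(p-1)$, so the matrix exponential and logarithm converge on $\pi_L^{k}M$ and on $1+\pi_L^{k}M$ respectively, are mutually inverse bijections between these two sets, and carry the Baker--Campbell--Hausdorff operation on $\pi_L^{k}M$ to multiplication on $1+\pi_L^{k}M$.

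Next I would show that $\exp$ and $\log$ restrict to mutually inverse bijections between $\pi_L^{k}\frg$ and $G(\pi_L^{k})$. For the inclusion $\log(G(\pi_L^{k}))\subseteq\pi_L^{k}\frg$: given $g\in G(\pi_L^{k})$, the unique continuous extension to $\bbZ_p$ of $n\mapsto g^{n}$ is well defined (the relevant sequence of integer powers is Cauchy because, as $k\geq e$, $p$-th powers land in deeper congruence subgroups), is a homomorphism $\bbZ_p\to\bG(o_L)$ given in matrices by the analytic map $t\mapsto\exp(t\log g)$, and therefore has derivative $\log g$ lying in $Lie(\bG(o_L))=\frg\otimes_{o_L}L$; combined with $\log g\in\pi_L^{k}M$ and the equality $(\frg\otimes_{o_L}L)\cap\pi_L^{k}M=\pi_L^{k}\frg$ (because $\frg$ is a summand of $M$) this gives the claim. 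For the reverse inclusion $\exp(\pi_L^{k}\frg)\subseteq\bG(o_L)$ I would use the formal group $\widehat{\bG}$ of $\bG$ along the identity section: by smoothness it is a formal Lie group over $o_L$ of dimension $d$ with Lie algebra $\frg$, compatibly embedded in $\widehat{{\rm GL}_{N}}$, whose formal exponential sends $\{X\in\frg\otimes_{o_L}L:v_p(X)>1/(p-1)\}$ into $\widehat{\bG}(o_L)\subseteq\bG(o_L)$ — and $k\geq e$ makes this apply to every element of $\pi_L^{k}\frg$. I expect this identification of the group-theoretic exponential with the Lie algebra of the \emph{group scheme} $\bG$ to be the main obstacle.

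Finally, put $L:=\pi_L^{k}\frg$. It is a free $o_L$-module, hence a free $\bbZ_p$-module of rank $[L:\bbQ_p]d$, and since $2k\geq k+e$ we have $[L,L]=[\pi_L^{k}\frg,\pi_L^{k}\frg]\subseteq\pi_L^{2k}\frg\subseteq p\,\pi_L^{k}\frg=pL$, so $L$ is powerful. By the previous step $\exp$ transports the addition and the bracket of $L$ onto the multiplication of $G(\pi_L^{k})$; hence, by Lazard's theory (where $p\neq 2$ is used), $G(\pi_L^{k})$ is uniform pro-$p$, the matrix logarithm coincides with $\log_G$, and the integral Lazard Lie algebra $L_{G(\pi_L^{k})}$, viewed inside $Lie(G(\pi_L^{k}))=\frg\otimes_{o_L}L$, equals $L=\pi_L^{k}\frg$. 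Since $\pi_L^{k}\frg$ is manifestly stable under multiplication by $o_L$, condition~(2) in the definition of $L$-uniformity holds, so $G(\pi_L^{k})$ is $L$-uniform with distinguished $o_L$-lattice $\pi_L^{k}\frg$, as asserted.
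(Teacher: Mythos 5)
Your proposal is correct in substance and follows the same overall strategy as the paper — identify $G(\pi_L^{k})$ with the exponential image of the lattice $\pi_L^{k}\frg$, observe that $\pi_L^{k}\frg_0=p\,(\pi_L^{k-e}\frg_0)$ is powerful because $k\geq e$ and $p\neq 2$, and then invoke the Lazard/DDMS correspondence between powerful $\bbZ_p$-Lie algebras and uniform pro-$p$ groups — but where the paper simply cites the theory of $o_L$-standard groups (\cite{DDMS}, \S13, Ex.~11, and \cite{B-L}, III.\S7.3) for the exponential identification, you reconstruct it by hand via a closed immersion $\bG\hookrightarrow {\rm GL}_{N,o_L}$, the matrix exponential, one-parameter subgroups for the inclusion $\log(G(\pi_L^{k}))\subseteq\pi_L^{k}\frg$, and the formal group $\widehat{\bG}$ for the reverse inclusion. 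What your route buys is self-containedness and an explicit check that the argument needs nothing beyond smoothness and the split reductive hypothesis; what the paper's route buys is brevity, since the standard-group theory already packages exactly the point you flag as the main obstacle. That flagged step — that the formal exponential of $\widehat{\bG}$ converges on $\{X\in\frg_L:\,v_p(X)>1/(p-1)\}$ and lands in $\widehat{\bG}(o_L)$ — is true but is asserted rather than proved in your write-up; to close it, either cite the standard-group references above, or note that in \'etale coordinates at the identity the exponential is given by $x_i\mapsto\sum_{n\geq 0}\tfrac{1}{n!}(D_X^{\,n}x_i)(e)$, where $D_X$ is the invariant derivation attached to $X$ and preserves $o_L[[x_1,\dots,x_d]]$ for $X\in\frg$; hence for $X\in\pi_L^{k}\frg$ the $n$-th term has valuation at least $nk/e-v_p(n!)\geq 1$ for $n\geq 1$, so the series converges to a point of $\widehat{\bG}(o_L)\subseteq\bG(o_L)$, which is compatible with the matrix exponential by functoriality. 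With that supplied (and with the small rephrasing that $\exp$ transports the Baker--Campbell--Hausdorff multiplication, the addition and bracket then being recovered inside the group by Lazard's limit formulas), your argument is complete and yields, exactly as in the paper, that $G(\pi_L^{k})$ is uniform with $L_{G(\pi_L^{k})}=\pi_L^{k}\frg_0$, and the $o_L$-stability of this lattice gives $L$-uniformity.
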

\begin{proof}
The construction in \cite{DDMS}, \S13. Ex. 11 extends from the
split semisimple case to the case of $\bG$. In particular,
$G(\pi_L^{k})$ is an $o_L$-standard group in the sense of loc.cit.
or \cite{B-L}, III.\S7.3 and the corresponding $o_L$-lattice
-mapping onto $G(\pi_L^{k})$ via the exponential map- is given by
$\pi_L^{k}\frg$. Since $k\geq e$, the $\bbZ_p$-Lie algebra
$\pi_L^k\frg_0=p (\pi_L^{k-e}\frg_0)$ is powerful (recall $p\neq
2$) and therefore $G(\pi_L^{k})_0$ is a uniform pro-$p$ group (cf.
\cite{DDMS}, Thm. 9.8).
\end{proof}
According to the lemma we have $L_{G(\pi_L^{k})}=\pi_L^{k}\frg_0$
for all $k\geq e$. Hence the hypothesis {\rm (HYP)} from
(\ref{para-HYP}) is satisfied. Applying Prop.
\ref{prop-enddiscussion} yields the following proposition.
\begin{prop}\label{prop-congruence}
Let $G:=G(\pi_L^{k})$ with $k\geq e$. Let $m\geq 1$ and put $n:=
(m-1)e+k$. There is a canonical algebra isomorphism
$$\cL_{G^{p^m}}: \widehat{U(\frg)_n}_{,K}\car
D_{r_0}(G^{p^m})$$ whose inverse yields an injective group
homomorphism $G^{p^m}\hookrightarrow
\widehat{U(\frg)_{n}}^\times.$
\end{prop}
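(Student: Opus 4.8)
The plan is to deduce the proposition directly from Proposition \ref{prop-enddiscussion}; the only thing to check is that its hypothesis {\rm (HYP)} of \ref{para-HYP} holds for the group $G = G(\pi_L^k)$.

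First I would appeal to the lemma just proved. Since $k \geq e$, it asserts that $G(\pi_L^k)$ is $L$-uniform and that the associated $o_L$-lattice in $Lie(G(\pi_L^k)) = \frg \otimes_{o_L} L$ equals $\pi_L^k\frg$. Viewing all the Lie algebras over $\bbZ_p$, this says precisely that $L_{G(\pi_L^k)} = \pi_L^k\frg_0$ as $\bbZ_p$-Lie algebras, which is hypothesis {\rm (HYP)} for the natural number $k$ (and $k \geq e \geq 1$, so $k$ is admissible there).

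With {\rm (HYP)} in force and for the fixed $m \geq 1$, Proposition \ref{prop-enddiscussion} applies verbatim to $G = G(\pi_L^k)$. As the number $n := (m-1)e+k$ is exactly the one occurring in loc.cit., it yields at once the algebra isomorphism $\cL_{G^{p^m}}: \widehat{U(\frg)_n}_{,K} \car D_{r_0}(G^{p^m})$ and, through its inverse, the injective group homomorphism $G^{p^m} \hookrightarrow \widehat{U(\frg)_n}^\times$.

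I do not expect any real obstacle. All the substantive content has already been supplied in \ref{para-HYP}: the factorization of the Lazard isomorphism (\ref{ultimate}) through the two-sided ideal $\langle J \rangle$, together with the identification of $\widehat{U(\pi_L^n\frg_0)}_{o_L}/\langle J \rangle$ with $\widehat{U(\frg)_n}$, which give (\ref{ultimatequot}). The sole remaining point is the bookkeeping, namely that the $k$ furnished by the lemma is the same $k$ entering $n = (m-1)e+k$, and this is immediate from the statements.
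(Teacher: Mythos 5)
Your proposal is correct and is exactly how the paper argues: the preceding lemma gives $L_{G(\pi_L^k)}=\pi_L^k\frg_0$, so hypothesis {\rm (HYP)} of \ref{para-HYP} holds, and Prop. \ref{prop-enddiscussion} is then applied with the same $n=(m-1)e+k$. Nothing is missing.
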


\end{para}

\vskip8pt

\end{para}
\begin{para}
In the following we let $G=G(\pi_L^k)$ be a congruence subgroup
for $k\geq e$. We have the finite group
$$H_m:=G/G^{p^m}.$$ We define inductively a set $\overline{H}_m$ of representatives for $H_m$ in
$G$ containing $1\in G$ with the property
\begin{numequation}\label{sys-rep}\overline{H}_{m+1}=\{\overline{g}\overline{h}:
\overline{g}\in \overline{H}_{m,m+1}, \overline{h}\in
\overline{H}_m\}\end{numequation} where $\overline{H}_{m,m+1}$ is
a set of representatives for the group $G^{p^m}/G^{p^{m+1}}$. Let
$$s_m: H_m\lra G$$
be the corresponding section of the canonical projection
homomorphism $G\ra H_m$. As usual we write $\overline{h}=s_m(h)$
for $h\in H_m$.

\vskip8pt

As in \ref{para-TAU} we have a map $\tau': H_m\times
H_m\rightarrow G^{p^m}$ by requiring
$$\overline{h}_1\overline{h}_2=\overline{h_1h_2}\cdot\tau'(h_1,h_2)$$ in
$G^{p^m}$ for $\overline{h}_i\in \overline{H}_m$ representatives
of the classes $h_i$. As in the above proposition, we let from now
on
$$n:=n(m):=(m-1)e+k$$
which is $\geq 1$. The group $G$ acts from the right via the
adjoint representation $g\mapsto {\rm Ad}(g^{-1})$ on
$U(\frg)$.\footnote{We use the right action here to match later
with our convention for the action map of a crossed product, cf.
\ref{para-ACT}.} This action descends to the deformation
$U(\frg)_n$ and extends then to the $p$-adic completion
$\widehat{U(\frg)_n}$. Since $\widehat{U(\frg)^\bG}$ is fixed
pointwise by this action we have an action on
$$\hUlm=\widehat{U(\frg)_n}\otimes_{\widehat{U(\frg)^\bG},\lambda}o_K$$
by letting $G$ act on the left factor. We therefore have a group
homomorphism $G\ra {\rm Aut}(\hUlm)$ and, consequently, the map
$$\sigma_u: H_m\stackrel{s_m}{\lra} G\lra {\rm Aut}(\hUlm).$$
On the other hand, there is the composite map
\begin{numequation}\label{equ-tauU}\tau_u: H_m\times
H_m\stackrel{\tau'}{\longrightarrow}
G^{p^m}\stackrel{\cL_{G^{p^m}}^{-1}}{\longrightarrow}
\widehat{U(\frg)_n}^\times\longrightarrow
\hUlm^\times\end{numequation} where the final map is given by
$\frx\mapsto \frx\otimes 1$.
\begin{lemma}\label{lem-rel1}
The action map $\sigma_u$ induces a group homomorphism $$H_m\ra
{\rm Out}(\hUlm).$$ The map $\tau_u$ is a $2$-cocycle with respect
to $\sigma_u$.
\end{lemma}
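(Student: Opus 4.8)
The plan is to transport the statement, via the Lazard isomorphism $\cL_{G^{p^m}}$ of Proposition \ref{prop-congruence}, to the crossed product structure on the distribution algebra, and then push everything forward along the central reduction $\widehat{U(\frg)_n}\to\hUlm$. Recall that, as in \ref{para-TAU} and Corollary \ref{cor-crossed}, for the section $s_m$ the Banach algebra $D_{r_m}(G)$ is the crossed product $D_{r_0}(G^{p^m})\ast H_m$ with action $\sigma(h)(s)=\delta_{s_m(h)}^{-1}s\,\delta_{s_m(h)}$ and twisting $\tau(h_1,h_2)=\delta_{\tau'(h_1,h_2)}$; since $D_{r_m}(G)$ is an associative ring, \ref{para-ACT} already tells us that $\sigma$ descends to a homomorphism $H_m\to{\rm Out}(D_{r_0}(G^{p^m}))$ and that $\tau$ is a $2$-cocycle with respect to it. On the other side, the algebraic adjoint action of $\bG(o_L)$ on $U(\frg)$ preserves $U(\frg)_n$ and its $p$-adic completion, and fixes $\widehat{U(\frg)^\bG}$ pointwise, so it descends to a homomorphism $\alpha\colon G\to{\rm Aut}(\hUlm)$, $\gamma\mapsto{\rm Ad}(\gamma^{-1})$ — a genuine homomorphism once one takes into account that this is a right action and the convention $(\alpha\beta)(x)=\beta(\alpha(x))$ of \ref{para-ACT}. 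By definition $\sigma_u=\alpha\circ s_m$, and $\tau_u$ is the image of $\tau'$ under $\cL_{G^{p^m}}^{-1}$ followed by $\widehat{U(\frg)_n}^\times\to\hUlm^\times$ (the units $\cL_{G^{p^m}}^{-1}(\tau'(h_1,h_2))$ lie in $\widehat{U(\frg)_n}^\times$ by Proposition \ref{prop-congruence}).

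The key step is to show that $\cL_{G^{p^m}}\colon\widehat{U(\frg)_n}_{,K}\to D_{r_0}(G^{p^m})$ is equivariant for these actions: for each $\gamma\in G$ the automorphism ${\rm Ad}(\gamma^{-1})$ of $\widehat{U(\frg)_n}_{,K}$ should correspond under $\cL_{G^{p^m}}$ to the automorphism $x\mapsto\delta_\gamma^{-1}x\,\delta_\gamma$ of $D_{r_0}(G^{p^m})$ (which is well defined and isometric because $G^{p^m}$ is normal in $\bG(o_L)$ and, by \ref{para-TAU}, the norm $\|\cdot\|_{r_0}^{(m)}$ is independent of the chosen topological generators of $G^{p^m}$). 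I would prove this by continuity and density: both maps are $o_K$-linear isometries of the respective $K$-Banach algebras, $\frg$ generates a dense $K$-subalgebra of $\widehat{U(\frg)_n}_{,K}$, and on $\frg$ both automorphisms equal ${\rm Ad}(\gamma^{-1})$ — for $\delta_\gamma^{-1}(\cdot)\delta_\gamma$ this is the standard compatibility of conjugation with the adjoint action on $\frg\subset U(\frg)_K\subset D(G^{p^m})$ (cf. \cite{ST4}), while $\cL_{G^{p^m}}$ is compatible with the two embeddings of $\frg$ by Propositions \ref{prop-Lazard} and \ref{prop-enddiscussion}. I expect this equivariance — essentially the functoriality of Lazard's isomorphism under conjugation, together with the matching of sign and composition conventions — to be the main obstacle; once it is in hand the rest is formal.

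Granting the equivariance, the first claim follows: from $s_m(h_1)s_m(h_2)=\overline{h_1h_2}\,\tau'(h_1,h_2)$ and the homomorphism property of $\gamma\mapsto{\rm Ad}(\gamma^{-1})$ one gets $\sigma_u(h_1)\sigma_u(h_2)=\sigma_u(h_1h_2)\cdot{\rm Ad}(\tau'(h_1,h_2)^{-1})$ in ${\rm Aut}(\hUlm)$. Applying the equivariance to $\gamma=\tau'(h_1,h_2)\in G^{p^m}$ — where the conjugation on $D_{r_0}(G^{p^m})$ is literally inner — identifies ${\rm Ad}(\tau'(h_1,h_2)^{-1})$ on $\widehat{U(\frg)_n}_{,K}$ with conjugation by $\cL_{G^{p^m}}^{-1}(\tau'(h_1,h_2))$, which descends (the adjoint action fixing $\widehat{U(\frg)^\bG}$ pointwise) to conjugation by its image $\tau_u(h_1,h_2)\in\hUlm^\times$. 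Hence $\sigma_u(h_1)\sigma_u(h_2)$ and $\sigma_u(h_1h_2)$ have the same image in ${\rm Out}(\hUlm)$, so $\sigma_u$ induces a group homomorphism $H_m\to{\rm Out}(\hUlm)$.

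For the second claim, the $2$-cocycle identity relating $\sigma$ and $\tau$ — valid in $D_{r_0}(G^{p^m})^\times$ because $D_{r_m}(G)$ is a crossed product — is carried by the ring isomorphism $\cL_{G^{p^m}}^{-1}$, using the equivariance to replace $\sigma$ by $\sigma_u$, to the same identity for $\cL_{G^{p^m}}^{-1}\circ\tau'$ and $\sigma_u$ in $\widehat{U(\frg)_n}^\times$. The ring homomorphism $\widehat{U(\frg)_n}\to\hUlm$ intertwines the adjoint actions (again because $\widehat{U(\frg)^\bG}$ is fixed pointwise) and sends units to units, so pushing the identity forward along it yields the $2$-cocycle identity for $\tau_u$ with respect to $\sigma_u$, as required.
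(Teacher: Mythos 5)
Your proposal is correct and takes essentially the same route as the paper: both arguments reduce the lemma to the fact that the Lazard isomorphism $\cL_{G^{p^m}}$ intertwines conjugation by elements of $G$ on the distribution-algebra side with the adjoint action $g\mapsto{\rm Ad}(g^{-1})$ on $\widehat{U(\frg)_n}$, and then transport the cocycle identities through this isomorphism and the central reduction $\widehat{U(\frg)_n}\to\hUlm$. The only cosmetic differences are that you obtain the two identities from the crossed-product structure $D_{r_m}(G)=D_{r_0}(G^{p^m})\ast_{\sigma,\tau}H_m$ of Corollary \ref{cor-crossed} rather than verifying them directly at the level of the group $G^{p^m}$, and that you supply a continuity-and-density argument (agreement on $\frg$) for the intertwining property, which the paper merely asserts as an observation.
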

\begin{proof}
Let $\sigma':G\rightarrow {\rm Aut}(G^{p^m})$ be the right action
given by conjugation of $G$ on the normal subgroup $G^{p^m}$, i.e.
$\sigma'(g): h\mapsto g^{-1}hg$. 
It is easy to check that the identities
\[\label{equ-IDENT}\begin{array}{ccc}
  \tau'(xy,z)\cdot\tau'(x,y)^{\sigma'(z)}=\tau'(x,yz)\cdot\tau'(y,z), & & \sigma'(y)\sigma'(z)=\sigma'(yz)\eta(y,z) \\
\end{array}\]
hold for all $x,y,z\in H_m$ where $\eta(y,z)$ denotes the
automorphism induced by right conjugation with $\tau'(y,z)$. It
remains to observe that the homomorphism $\cL^{-1}_{G^{p^m}}$
intertwines the conjugation action on the source with the adjoint
action on the target.
\end{proof}

According to the preceding lemma we have, for each $m\geq 1$, an
associative crossed product
$$A^\lambda_m:=\hUlm*_{\sigma_u,\tau_u} H_m$$ where $n=(m-1)e+k\geq 1$ and $H_m=G/G^{p^m}$.
We emphasize that the datum $\sigma_u,\tau_u$ depends on $m$. We
put $A^\lambda_{m,K}:=A^\lambda_m\otimes_{o_L} K$ so that
$A^\lambda_{m,K}=\hUlmK* H_m.$

\end{para}
\begin{para}\label{para-RESA}
Recall the homomorphism
$$res_u: \widehat{\cU^\lambda_{n+1}}_{,K}\ra\widehat{\cU^\lambda_{n}}_{,K}$$
for all $n$ from the end of section \ref{sect-DC}. Using it we may
define a $K$-linear homomorphism
$$res_a: A^\lambda_{m+1,K}\lra A^\lambda_{m,K}$$ compatible with $res_u$
as follows: by (\ref{sys-rep}) the source is a finite free
$\widehat{\cU^\lambda_{n+1}}_{,K}$-module on basis elements
$\overline{g}\overline{h}\in \overline{H}_{m+1}$ where
$\overline{g}\in \overline{H}_{m,m+1}\subset G^{p^m}$ and
$\overline{h}\in \overline{H}_m$ and we put
$$res_a(\overline{g}\overline{h}):=L_u(\overline{g})\cdot\overline{h}$$
where $L_u$ equals the map
$$G^{p^m}\stackrel{\cL_{G^{p^m}}^{-1}}{\longrightarrow}
\widehat{U(\frg)_n}^\times\longrightarrow \hUlm^\times $$ used in
the definition of $\tau_u$. It will follow from the proposition
below that the $K$-linear map $res_a$ is actually an algebra
homomorphism.

\vskip8pt

Consider the central character $$\theta: Z(\frg_K)\lra K$$ that is
induced from $\lambda_K\in\frt^*_K$ via the Harish-Chandrda map
$\phi_K$ and recall the central reductions $D_{r_m}(G)_\teta$.
Recall (\ref{cor-crossed}) the canonical inclusion
$D_{r_0}(G^{p^m})\hookrightarrow D_{r_m}(G).$ Cor.
\ref{cor-enddiscussion2} and Prop. \ref{prop-congruence} imply the
isomorphism $$\cL_{G^{p^m}}: \hUlmK\car D_{r_0}(G^{p^m})_\theta.$$
The two maps compose to an algebra homomorphism
$$\hUlmK\lra
D_{r_m}(G)_\theta.$$
\begin{prop}\label{prop-ident}
The homomorphism extends to an algebra isomorphism
$$A^\lambda_{m,K}\car D_{r_m}(G)_\teta$$ which is compatible with
variation in $m$. In particular, there is a canonical algebra
isomorphism $$A^\lambda_K:=\varprojlim_m A^\lambda_{m,K}\car
D(G)_\theta.$$
\end{prop}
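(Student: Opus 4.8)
The plan is to deduce the level-$m$ isomorphism $A^\lambda_{m,K}\car D_{r_m}(G)_\theta$ from the crossed-product presentation of $D_{r_m}(G)$ in Corollary \ref{cor-crossed}, reduced modulo $\theta$ and transported along the Lazard isomorphism, and then to pass to the projective limit over $m$ by means of (\ref{equ-centred}). So I would begin by applying Corollary \ref{cor-crossed} with the chosen representatives $\overline{H}_m\subset G$ from (\ref{sys-rep}): it identifies $D_{r_m}(G)$ with $D_{r_0}(G^{p^m})*_{\sigma,\tau}H_m$, where $\sigma(h)(s)=\delta_{\overline h}^{-1}s\,\delta_{\overline h}$ and $\tau(h_1,h_2)=\delta_{\tau'(h_1,h_2)}$, with $\tau'$ the map appearing in \ref{para-RESA}. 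Since $\ker\theta$ lies in the centre of $D(G)$, it is fixed pointwise by every $\sigma(h)$ and normalized by every $\overline h$; hence this crossed product is compatible with central reduction along $\theta$ and yields an isomorphism $D_{r_0}(G^{p^m})_\theta*_{\sigma,\tau}H_m\car D_{r_m}(G)_\theta$.

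Next I would invoke Corollary \ref{cor-enddiscussion2} together with Proposition \ref{prop-congruence} to obtain $\cL_{G^{p^m}}:\hUlmK\car D_{r_0}(G^{p^m})_\theta$, and check that this isomorphism intertwines $(\sigma_u,\tau_u)$ with $(\sigma,\tau)$. For the twisting this is immediate from the definition (\ref{equ-tauU}) of $\tau_u$, because $\tau'$ takes values in $G^{p^m}$ and $\delta_{\tau'(h_1,h_2)}$ is, by construction, the image of $\cL_{G^{p^m}}^{-1}(\tau'(h_1,h_2))$. For the action it is exactly the fact, already used in the proof of Lemma \ref{lem-rel1}, that $\cL_{G^{p^m}}^{-1}$ intertwines conjugation by group elements with the adjoint action; this comes from the naturality of the Lazard isomorphism applied to the automorphism of $G^{p^m}$ given by conjugation by $\overline h$, whose differential is ${\rm Ad}(\overline h^{-1})=\sigma_u(h)$, and it descends to the central reductions since $\sigma_u$ fixes the image of $Z(\frg_K)$. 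Functoriality of the crossed-product construction in its coefficient ring then produces
$$A^\lambda_{m,K}=\hUlmK*_{\sigma_u,\tau_u}H_m\;\car\;D_{r_0}(G^{p^m})_\theta*_{\sigma,\tau}H_m\;\car\;D_{r_m}(G)_\theta,$$
and by construction this isomorphism restricts on $\hUlmK$ to the homomorphism $\hUlmK\to D_{r_m}(G)_\theta$ already defined before the statement of the proposition.

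For the compatibility with variation in $m$ I would compare $res_a$ with the Fr\'echet--Stein transition map $D_{r_{m+1}}(G)_\theta\to D_{r_m}(G)_\theta$. On the free $\widehat{\cU^\lambda_{n+1}}_{,K}$-basis $\{\overline g\,\overline h:\overline g\in\overline{H}_{m,m+1}\subset G^{p^m},\ \overline h\in\overline{H}_m\}$ of $A^\lambda_{m+1,K}$ supplied by (\ref{sys-rep}), the basis vector $\overline g\,\overline h$ corresponds, under the level-$(m+1)$ isomorphism just constructed, to $\delta_{\overline g}\delta_{\overline h}=\delta_{\overline g\overline h}$; its image under the transition map, expanded in the crossed-product basis of $D_{r_m}(G)_\theta=D_{r_0}(G^{p^m})_\theta*H_m$, is $\delta_{\overline g}\cdot\overline h$ with $\delta_{\overline g}\in D_{r_0}(G^{p^m})_\theta$, and pulling back along $\cL_{G^{p^m}}$ this equals $L_u(\overline g)\cdot\overline h$, which is the defining value of $res_a(\overline g\,\overline h)$ in \ref{para-RESA}. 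As $res_a$ is moreover compatible with $res_u$ on coefficients, and $res_u$ corresponds under the Lazard isomorphisms to the inclusion $D_{r_0}(G^{p^{m+1}})\subset D_{r_0}(G^{p^m})$ (naturality for $G^{p^{m+1}}\hookrightarrow G^{p^m}$), the square with horizontal isomorphisms $A^\lambda_{m+1,K}\car D_{r_{m+1}}(G)_\theta$ and $A^\lambda_{m,K}\car D_{r_m}(G)_\theta$ and vertical maps $res_a$ and the transition map commutes. Three of its four arrows being algebra homomorphisms, so is $res_a$ --- the assertion announced in \ref{para-RESA}. Taking $\varprojlim_m$ and using (\ref{equ-centred}) then gives $A^\lambda_K=\varprojlim_m A^\lambda_{m,K}\car\varprojlim_m D_{r_m}(G)_\theta\car D(G)_\theta$.

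The step I expect to require the most care is this last one: verifying that the $K$-linear map $res_a$ of \ref{para-RESA} really agrees with the map induced by the Fr\'echet--Stein transition morphism under the level-wise identifications, so that the isomorphisms assemble into one in the limit. The matching of the crossed-product data across $\cL_{G^{p^m}}$ is, by comparison, essentially already contained in Lemma \ref{lem-rel1} and the naturality of Lazard's isomorphism, and the central-reduction step is routine since $\ker\theta$ is central.
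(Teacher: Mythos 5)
Your proposal is correct and takes essentially the same route as the paper: both exhibit $D_{r_m}(G)_\theta$ as the crossed product $D_{r_0}(G^{p^m})_\theta*_{\sigma,\tau}H_m$ via Cor.~\ref{cor-crossed} and central reduction, transport it along $\cL_{G^{p^m}}$ (Cor.~\ref{cor-enddiscussion2}) after checking that the actions and twistings are intertwined, and obtain the limit statement from (\ref{equ-centred}) together with $r_m\uparrow 1$. Your explicit check on the basis elements $\overline{g}\,\overline{h}$ that $res_a$ matches the Fr\'echet--Stein transition map (and is hence multiplicative) is precisely the verification the paper dismisses as ``follows by inspection''.
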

\begin{proof}
Source and target are crossed products involving the same group
$H_m$. It can be checked that the corresponding actions and
twistings are intertwined by the isomorphism $\cL_{G^{p^m}}$. This
yields the first assertion. The second assertion follows by
inspection. The final assertion follows from (\ref{equ-centred})
together with the fact that $r_m\uparrow 1$ for $m\ra\infty$.
\end{proof}

\vskip8pt

\end{para}

\begin{para}
Recall that the sheaf $\tilde{\cD}$ is (left) $\bG$-equivariant
and, a fortiori, left $G$-equivariant. Since $G$ acts
$o_L$-linearly this structure descends to the deformation
$\tilde{\cD}_n$ and then to the $p$-adic completion
$\widehat{\tilde{\cD}_n}$. Since the image of
$$\widehat{\psi_n}: \hUtm\lra \widehat{\tiDm}$$ lies in the
$G$-invariants of $\widehat{\tilde{\cD}_n}$ (\cite{Milicic93},
\S3) we obtain an equivariant structure on the central reduction
$$\hDml=\widehat{\tiDm}\otimes_{\hUtm,\lambda} o_K$$
by letting $G$ act on the left factor. Let $U\in\cS$. The algebra
$\hDml(U)$ equals the $p$-adic completion of $\Dml(U)$ where
$\Dml=\tiDm\otimes_{U(\frt)_n,\lambda} o_K$. Since $\cS$ is a base
for the topology on $X$, the sheaf $\hDml$ is therefore supported
only on the special fibre $X_{s}=X \times_{o_L}\kappa$ of the
$o_L$-scheme $X$.
However, the set $X_{s}$ is fixed pointwise by the group
$$G(\pi_L)=\ker\; ( \bG(o_L)\longrightarrow \bG(\kappa)).$$
Since $G\subseteq G(\pi_L)$ the equivariant $G$-structure is
actually an action of $G$ on the sheaf $\hDml$. Precomposing with
the involution $g\mapsto g^{-1}$ we have a homomorphism
$$ \tilde{\sigma}: G\lra {\rm Aut}(\hDml)$$ (we remind the reader another time of our convention regarding the
multiplication in automorphism groups, cf. section
\ref{sect-crossed}). Consequently, we have a map
$$\sigma_\cD: H_m\stackrel{s_m}{\lra} G\lra {\rm
Aut}(\hDml)$$ where $s_m$ is our fixed section of the projection
map $G\ra H_m$. Recall that we have an algebra homomorphism
$$\widehat{\varphi^\lambda_n}:\hUlm\lra \Gamma(X,\hDml)$$
which intertwines the (right) adjoint action on the source with
our right action on the target (cf. \cite{Milicic93}, \S3). We
have a map
$$\tau_\cD: H_m\times
H_m\stackrel{\tau_u}{\longrightarrow}\hUlm^\times
\stackrel{\widehat{\varphi^\lambda_n}}{\longrightarrow}
\Gamma(X,\hDml)^\times.$$
\begin{lemma}\label{lem-OUT}
The action map $\sigma_\cD$ induces a group homomorphism $$H_m\ra
{\rm Out}(\hDml).$$ The map $\tau_\cD$ is a $2$-cocycle with
respect to $\sigma_\cD$.
\end{lemma}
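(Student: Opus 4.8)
The plan is to mirror the proof of Lemma \ref{lem-rel1}, transporting the outer action and the $2$-cocycle there along the $G$-equivariant algebra homomorphism $\widehat{\varphi^\lambda_n}:\hUlm\to\Gamma(X,\hDml)$. By construction $\tau_\cD=\widehat{\varphi^\lambda_n}\circ\tau_u$ and $\sigma_\cD=\tilde\sigma\circ s_m$, and $\widehat{\varphi^\lambda_n}$ intertwines the right adjoint action on $\hUlm$ with the action $\tilde\sigma$ on $\hDml$ (cf. \cite{Milicic93}, \S3); the whole argument is then a translation of Lemma \ref{lem-rel1}, carried out in three steps.

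First, the cocycle identity for $\tau_\cD$. I would apply the algebra homomorphism $\widehat{\varphi^\lambda_n}$ to the twisted $2$-cocycle identity satisfied by $\tau_u$ (the content of Lemma \ref{lem-rel1}). Every term is a product of elements of $\hUlm^\times$, and the only term requiring care is $\tau_u(x,y)^{\sigma_u(z)}$, whose image under $\widehat{\varphi^\lambda_n}$ equals $\tau_\cD(x,y)^{\sigma_\cD(z)}$ by the intertwining property. This yields $\tau_\cD(xy,z)\cdot\tau_\cD(x,y)^{\sigma_\cD(z)}=\tau_\cD(x,yz)\cdot\tau_\cD(y,z)$ in $\Gamma(X,\hDml)^\times$, relative to the set-theoretic lift $\sigma_\cD=\tilde\sigma\circ s_m$; this is an identity among global sections, so no sheaf-theoretic subtlety arises here.

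Second, that $\sigma_\cD$ descends to a homomorphism $H_m\to{\rm Out}(\hDml)$. Since $\tilde\sigma$ is a genuine group homomorphism $G\to{\rm Aut}(\hDml)$ and $s_m(y)s_m(z)=s_m(yz)\,\tau'(y,z)$ in $G$, one gets $\sigma_\cD(y)\sigma_\cD(z)=\sigma_\cD(yz)\cdot\tilde\sigma(\tau'(y,z))$, so it suffices to show that $\tilde\sigma(h)$ is conjugation by a global unit for every $h\in\Gpm$, and in fact conjugation by $\widehat{\varphi^\lambda_n}(\cL^{-1}_{G^{p^m}}(h))$ (which simultaneously identifies the inner discrepancy with $\tau_\cD$). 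To prove this I would use that over each $U\in\cS$ the algebra $\hDml$ is topologically generated over $\cO_X(U)$ by the image of $\widehat{\varphi^\lambda_n}$: indeed $\tiD$ is locally generated over $\cO_X$ by $\varphi(U(\frg))$ and the central $\psi(U(\frt))$, and the second factor becomes scalar after the central reduction along $\lambda$. Two continuous ring automorphisms of $\hDml$ therefore agree once they agree on $\cO_X$ and on $\widehat{\varphi^\lambda_n}(\hUlm)$. On $\widehat{\varphi^\lambda_n}(\hUlm)$ the equality $\tilde\sigma(h)=c_{\widehat{\varphi^\lambda_n}(\cL^{-1}_{G^{p^m}}(h))}$ follows from the $G$-equivariance of $\widehat{\varphi^\lambda_n}$ together with the fact used already in Lemma \ref{lem-rel1} that the adjoint action of $h\in\Gpm$ on $\hUlm$ is conjugation by the image of $\cL^{-1}_{G^{p^m}}(h)$. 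On $\cO_X$ one must check separately that conjugation by $\widehat{\varphi^\lambda_n}(\cL^{-1}_{G^{p^m}}(h))$ reproduces the geometric $\Gpm$-action $\tilde\sigma(h)$ on functions, which comes down to matching the flow of the relevant vector field with $\exp_{\Gpm}$ through the Lazard isomorphism, keeping track of the precomposition with $h\mapsto h^{-1}$ built into $\tilde\sigma$.

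The step I expect to be the real obstacle is this last point: identifying $\tilde\sigma(h)|_{\cO_X}$ with conjugation by $\widehat{\varphi^\lambda_n}(\cL^{-1}_{G^{p^m}}(h))|_{\cO_X}$ for $h\in\Gpm$. It is the only place where the geometry of the $\bG$-action on $X$ genuinely enters — as opposed to the purely algebraic bookkeeping transported from Lemma \ref{lem-rel1} — and it is where the left/right and inversion conventions of the paper have to be lined up with care. Everything else is formal.
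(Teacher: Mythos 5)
Your reduction is the same as the paper's: transport the cocycle identity from Lemma \ref{lem-rel1} along $\widehat{\varphi^\lambda_n}$, and reduce the homomorphism statement to the claim that for $h\in\Gpm$ the automorphism $\tilde\sigma(h)$ of $\hDml$ is conjugation by the global unit $\widehat{\varphi^\lambda_n}(\cL^{-1}_{G^{p^m}}(h))$. But the step you yourself flag as ``the real obstacle'' --- identifying the geometric action of $h$ on $\cO_X$ (equivalently, on sections of $\widehat{\tiD_n}_{,K}$ over $U\in\cS$ meeting the special fibre) with conjugation by that unit --- is exactly the mathematical content of the lemma, and your proposal does not prove it; ``matching the flow of the vector field with $\exp_{\Gpm}$ through the Lazard isomorphism'' is a description of the problem, not an argument. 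Note also that this is genuinely an analytic statement: the element $\cL^{-1}_{G^{p^m}}(h)$ is a group-like element of the completed algebra $\widehat{U(\frg)_n}_{,K}$, not a finite expression in vector fields, so one needs a framework in which both the translation action of $\Gpm$ on functions and the algebra $\widehat{U(\frg)_n}_{,K}$ act compatibly and in which a density argument is available.

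The paper supplies precisely this missing device: following \cite{PSS}, \S3 it forms the completed skew constructions $\widehat{\cO_X(U)}_K\#\widehat{U(\frg)_n}_{,K}$ and $\widehat{\cO_X(U)}_K\#D_{r_0}(\Gpm)$ (using that the $\Gpm$-action on $\widehat{\cO_X(U)}_K$ is locally analytic, so it integrates to a $D_{r_0}(\Gpm)$-module structure, and that $\cL_{G^{p^m}}$ identifies the two coefficient algebras), together with the $\Gpm$-equivariant surjection $pr$ onto $\widehat{\tiD_n}_{,K}(U)$ compatible with $\iota$ and $\widehat{\varphi_n}$. In the completed skew group ring the identity ``$g$ acts by conjugation by $\iota'(g)$'' is checked on the dense subring $\cO_X(U)\#\Gpm$ (\cite{ST4}, Lemma 3.1), where it is a one-line formal computation on elements $f\otimes\delta_h$, and then pushed through $pr$; this proves the conjugation identity on the whole completed algebra at once, so the paper never needs your auxiliary ``two automorphisms agree if they agree on $\cO_X$ and on the image of $\widehat{\varphi^\lambda_n}$'' step (whose topological-generation input over the deformed integral structure would itself require justification). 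Until you either reproduce this skew-ring/density argument or give an independent proof of the action-equals-conjugation claim on functions, your proof has a genuine gap at its crux.
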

\begin{proof}
Let $\sigma:=\sigma_\cD,\tau:=\tau_\cD$. We know that $\tau_u$ is
a $2$-cocycle with respect to $\sigma_u$. By the intertwining
property of $\widehat{\varphi^\lambda_n}$ mentioned above we
obtain the first of the following two identities:
\[\begin{array}{ccc}
  \tau(xy,z)\cdot\tau(x,y)^{\sigma(z)}=\tau(x,yz)\cdot\tau(y,z), & & \sigma(y)\sigma(z)=\sigma(yz)\eta(y,z), \\
\end{array}\]
$x,y,z\in H_m$ where $\eta(y,z)$ denotes the automorphism of the
sheaf $\hDml$ induced by right conjugation with the unit
$\tau(y,z)$. Let us show the second identity. By definition of
$\tau'$ we have in $G$
$$\overline{y}\cdot \overline{z}=\overline{yz}\cdot\tau'(y,z).$$ Applying the
homomorphism $\tilde{\sigma}$ we obtain
$$\sigma(y)\sigma(z)=\sigma(yz)\tilde{\sigma}(\tau'(y,z)).$$
We are therefore reduced to show
$$\eta(y,z)=\tilde{\sigma}(\tau'(y,z))\hskip25pt {\rm (*)}$$ as automorphisms of the
sheaf $\hDml$. Since $\hDml$ is supported on $X_s$, it suffices to
prove this for the restriction of $\hDml$ to $X_s$
(\cite{Iversen}, Prop. 6.4) which we denote by the same symbol.
Choose an open subset $V\in\cS$ and put $U:=V\cap X_s$. It
suffices to check that both sides of ${\rm (*)}$ induce the same
automorphism of $\hDml(U)$. This assertion is then seen most
conceptually by introducing a certain completed skew group ring as
follows (cf. \cite{PSS}, \S3). To ease notation and to make the
comparison with loc.cit. more transparent we will for the rest of
this proof only consider left actions. Since $U$ is contained in
the special fibre, the induced action of $G^{p^m}$ on $X$ fixes
$U$ pointwise whence $G^{p^m}$ acts on $\cO_X(U)$ by ring
automorphisms. The derived action $\frg\rightarrow\cT_X$ of $\bG$
on $X$ induces, by restriction, an action of $\pi_L^n\frg$ on
$\cO_X(U)$ by $o_L$-derivations. We may therefore form the skew
group ring and the skew enveloping algebra
\[\begin{array}{ccc}
  \cO_X(U)\# G^{p^m}, &  & \cO_X(U)\# U(\frg)_n. \\
\end{array}\]

with respect to these actions (\cite{MCR}, 1.5.4, 1.7.10). Both
rings have natural $G^{p^m}$-actions: given $g\in G^{p^m}$ let
$$g.(f\otimes h):=(g.f)\otimes (ghg^{-1})$$
for $f\in\cO_X(U), h\in G^{p^m}$ and
$$g.(f\otimes\frx):=(g.f)\otimes({\rm Ad}(g)(\frx))$$
for $f\in\cO_X(U), \frx\in U(\frg)_n$. We let $$\iota:
U(\frg)_n\hookrightarrow \cO_X(U)\# U(\frg)_n,~\frx\mapsto
1\otimes\frx.$$

\vskip8pt

Let $\widehat{\cO_X(U)}$ be the $p$-adic completion of $\cO_X(U)$.
Denote the base change to $K$ of the $p$-adic completion of the
skew enveloping algebra by $\widehat{\cO_X(U)}_K\#
\widehat{U(\frg)_n}_{,K}$. The $G^{p^m}$-action extends to this
completion. There is a natural $G^{p^m}$-equivariant surjective
algebra homomorphism
$$ pr: \widehat{\cO_X(U)}_K\# \widehat{U(\frg)_n}_{,K}\lra
\widehat{\tilde{\cD}_n}_{,K}(U)$$ compatible with the maps $\iota$
and $\widehat{\varphi_n}_{,K}$ (cf. (\ref{equ-Dtrivial}) and
\cite{Milicic93Preprint}, Lemma C.1.3).

The $G^{p^m}$-action on the Banach space $\widehat{\cO_X(U)}_K$ is
induced from the rational $\bG$-action on $X$. It is therefore
locally analytic as can be seen along the lines of (the proof of)
\cite{ST02c}, Prop. 2.1'. According to \cite{PSS}, 3.2 there is a
natural structure of topological $K$-algebra on the completed
tensor product
$$ \widehat{\cO_X(U)}_K\# D_{r_0}(G^{p^m}):=\widehat{\cO_X(U)}_K\hat{\otimes}_K D_{r_0}(G^{p^m})$$
compatible with the two skew multiplication rings above via the
maps
\[\begin{array}{ccc}
  G^{p^m}\hookrightarrow D_{r_0}(G^{p^m})^\times, &  & \cL_{G^{p^m}}: \widehat{U(\frg)_n}_{,K}\car D_{r_0}(G^{p^m}). \\
\end{array}\]
Note for the second compatibility that $\cL_{G^{p^m}}$ is
compatible with the inclusions $U(\frg)\subset Lie(G^{p^m})\subset
D(G^{p^m})$ (prop. \ref{prop-Lazard}). The inclusion
$$\cO_X(U)_K\#G^{p^m}\hookrightarrow \widehat{\cO_X(U)}_K\#D_{r_0}(G^{p^m})$$ has dense image
(\cite{ST4}, Lemma 3.1) and the $G^{p^m}$-action extends by
continuity to the target. We summarize this discussion in the
following commutative diagram
\[
\xymatrix{ \widehat{\tilde{\cD}_n}_{,K}(U)\ar[d]^{=} &
\widehat{U(\frg)_n}_{,K}\ar[d]^{\iota}\ar[l]_{\widehat{\varphi_n}_{,K}}
\ar[r]_{\simeq}^{\cL_{G^{p^m}}}&
 D_{r_0}(G^{p^m})\ar[d]^{\iota'}
 \\
\widehat{\tilde{\cD}_n}_{,K}(U)&
\widehat{\cO_X(U)}_K\#\widehat{U(\frg)_n}_{,K}\ar[l]^-{pr}\ar[r]^{\simeq}
& \widehat{\cO_X(U)}_K\#D_{r_0}(G^{p^m})}
\]
where $\iota'(\delta):=1\hat{\otimes}\delta.$ The lower horizontal
arrows are $G^{p^m}$-equivariant. Consider an element
$g=\cL_{G^{p^m}}(x)\in G^{p^m}$ and $\lambda\in
\widehat{\cO_X(U)}_K\#\widehat{U(\frg)_n}_{,K}$. We claim
$$g.\lambda =\iota(x)\cdot\lambda\cdot \iota(x)^{-1}.$$ According to the right-hand part of the above
commutative diagram this can be checked in
$\widehat{\cO_X(U)}_K\#D_{r_0}(G^{p^m})$ where the identity reads
$$g. \lambda = \iota'(g)\cdot \lambda\cdot \iota'(g)^{-1}.$$ By a density argument using again \cite{ST4}, Lemma 3.1 it suffices to check
this identity on elements of the form $\lambda= f\otimes\delta_h$
with $f\in\cO_X(U),\delta_h\in G^{p^m}$, i.e. in the skew group
ring $\cO_X(U)\#G^{p^m}$. Denoting by $\cdot$ the skew
multiplication in the latter ring we have
$$g. (f\cdot\delta_h)=(g.f)\cdot(g\delta_h g^{-1})=g\cdot
 f\cdot g^{-1}\cdot (g\delta_h g^{-1})=g\cdot
 (f\cdot\delta_h)\cdot g^{-1}$$
 which is the assertion. Using the left-hand part of the above
commutative diagram we deduce that a group element
$g=\cL_{G^{p^m}}(x)\in G^{p^m}$ acts on
$\widehat{\tilde{\cD}_n}_{,K}(U)$ by conjugation with
$\widehat{\varphi_n}_{,K}(x)$. Passing to central reductions via
$\lambda$ we see that such an element acts on
$\widehat{\cD^\lambda_n}_{,K}(U)$ by conjugation with
$\widehat{\varphi^\lambda_n}_{,K}(x)$. Applying this to the
element $\tau_u(y,z)\in\hUlmK$ and observing that
$\cL_{G^{p^m}}\circ\tau_u=\tau'$ (cf. (\ref{equ-tauU})) we see
that $\tau'(y,z)$ acts from the left on
$\widehat{\tilde{\cD}_n}_{,K}(U)$ by left conjugation with the
unit $\widehat{\varphi^\lambda_n}_{,K}(\tau_u(y,z))=\tau(y,z)$
which is what we want.
\end{proof}

\vskip8pt

To ease notation we abbreviate $\sigma:=\sigma_\cD,\tau:=\tau_\cD$
in the following. By section \ref{sect-crossed} and the lemma we
have, for each $m\geq 1$, a crossed product sheaf of associative
$K$-algebras

$$\Bslm:=\hDmlK*_{\sigma,\tau}H_m$$ where $n=(m-1)e+k$ and $H_m=G/G^{p^m}$. We emphasize that the datum
$\sigma,\tau$ depends on $m$. Since $\hDmlK$ is a sheaf of
coherent rings, so is $\Bslm$.

\end{para}

\begin{para}

Let us assume that the weight $\lambda+\rho\in \frt^*_K$ is
dominant and regular. Since $\Bslm$ is a finite free (left) module
over $\hDmlK$, the equivalence of categories for $\hDmlK$ given in
Thm. \ref{thm-BB} admits the following straightforward version
over the sheaf of rings $\Bslm$. Recall the functor ${\rm
Loc}_\lambda$ of (4.11). Given a coherent $\Gamma(X,\Bslm)$-module
$M$ we may form the $\Bslm$-module
$${\rm L}_\lambda(M):=\Bslm\otimes_{\Gamma(\Bslm)} M.$$ Any element $\mu$ in this
module can be written as $$\sum_{k,i} \lambda_k(\mu)\overline{h}_k
\otimes m_i =\sum_{k,i} \lambda_k(\mu)\otimes \overline{h}_k.m_i$$
with $\lambda(\mu)_k \in \hUlmK, m_i\in M$. The natural
homomorphism
 $${\rm Loc}_\lambda(M)\twoheadrightarrow {\rm L}_\lambda(M)$$
 of $\hDmlK$-modules induced by the embedding $\hDmlK\hookrightarrow\Bslm$ is therefore surjective.
Since ${\rm Loc}_\lambda(M)$ is a coherent $\hDmlK$-module (Thm.
\ref{thm-BB}), ${\rm L}_\lambda(M)$ is a coherent $\Bslm$-module.
In a similar manner we deduce that $\Gamma(\cM)$ is a coherent
$\Gamma(\Bslm)$-module for any coherent $\Bslm$-module $\cM$. We
therefore have the adjoint pair $({\rm L}_\lambda, \Gamma(\cdot))$
between the abelian categories ${\rm coh}(\Gamma(\Bslm))$ and
${\rm coh}(\Bslm)$.
\begin{prop}\label{prop-BBneu} Suppose the weight $\lambda+\rho\in\frt^*_K$ is dominant
and regular. The pair $({\rm L}_\lambda, \Gamma(\cdot))$ induces
mutually inverse equivalence of categories
$${\rm coh}(\Gamma(\Bslm))\car{\rm coh}(\Bslm).$$
\end{prop}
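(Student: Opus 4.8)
The plan is to descend the equivalence for $\hDmlK$ provided by Theorem \ref{thm-BB} to the crossed product $\Bslm=\hDmlK*_{\sigma,\tau}H_m$ by means of restriction of scalars along the subring inclusion $\hDmlK\hookrightarrow\Bslm$. First I would record the structural facts. By definition of the crossed product, $\Bslm$ is free of finite rank $|H_m|$ as a left (and as a right) $\hDmlK$-module on the basis $\overline{H}_m$; since $\Gamma=\Gamma(X,-)$ commutes with finite direct sums, $\Gamma(\Bslm)=\bigoplus_{h\in H_m}\Gamma(\hDmlK)\,\overline{h}$ is the crossed product of the ring $\Gamma(\hDmlK)$ by $H_m$ with the induced action and twisting. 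Write ${\rm res}$ for restriction of scalars, both as a functor ${\rm Mod}(\Bslm)\to{\rm Mod}(\hDmlK)$ and as a functor ${\rm Mod}(\Gamma(\Bslm))\to{\rm Mod}(\Gamma(\hDmlK))$; in either case ${\rm res}$ is exact, faithful, reflects isomorphisms, commutes with $\Gamma$, and (the extensions being finite) preserves coherence. That ${\rm L}_\lambda$ and $\Gamma$ themselves preserve coherence in the directions needed, and hence that the adjoint pair $({\rm L}_\lambda,\Gamma(\cdot))$ between ${\rm coh}(\Gamma(\Bslm))$ and ${\rm coh}(\Bslm)$ is defined, was already established in the paragraph preceding the statement.

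The central step I would carry out is to show that ${\rm res}$ intertwines ${\rm L}_\lambda$ with ${\rm Loc}_\lambda$. The multiplication map $\hDmlK\otimes_{\Gamma(\hDmlK)}\Gamma(\Bslm)\to\Bslm$, $d\otimes s\mapsto ds$, is an isomorphism of $(\hDmlK,\Gamma(\Bslm))$-bimodules, being the identity on each summand $\hDmlK\,\overline{h}$. Hence, by associativity of tensor products, every $\Gamma(\Bslm)$-module $M$ satisfies ${\rm res}\,{\rm L}_\lambda(M)=\hDmlK\otimes_{\Gamma(\hDmlK)}\bigl(\Gamma(\Bslm)\otimes_{\Gamma(\Bslm)}M\bigr)\cong\hDmlK\otimes_{\Gamma(\hDmlK)}{\rm res}\,M={\rm Loc}_\lambda({\rm res}\,M)$, naturally in $M$. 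Combined with the tautological identity ${\rm res}\circ\Gamma=\Gamma\circ{\rm res}$, a diagram chase using naturality and the explicit bimodule isomorphism above then shows that ${\rm res}$ carries the unit $M\to\Gamma({\rm L}_\lambda M)$ and the counit ${\rm L}_\lambda(\Gamma\cM)\to\cM$ of the pair $({\rm L}_\lambda,\Gamma)$ to, respectively, the unit ${\rm res}\,M\to\Gamma({\rm Loc}_\lambda\,{\rm res}\,M)$ and the counit ${\rm Loc}_\lambda(\Gamma\,{\rm res}\,\cM)\to{\rm res}\,\cM$ of the pair $({\rm Loc}_\lambda,\Gamma)$ appearing in Theorem \ref{thm-BB}.

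Finally I would conclude. For $M\in{\rm coh}(\Gamma(\Bslm))$ the module ${\rm res}\,M$ is a coherent $\Gamma(\hDmlK)$-module, so by Theorem \ref{thm-BB} the unit ${\rm res}\,M\to\Gamma({\rm Loc}_\lambda\,{\rm res}\,M)$ is an isomorphism; since ${\rm res}$ reflects isomorphisms, $M\to\Gamma({\rm L}_\lambda M)$ is an isomorphism. Symmetrically, for $\cM\in{\rm coh}(\Bslm)$ the restriction ${\rm res}\,\cM$ is a coherent $\hDmlK$-module, the counit ${\rm Loc}_\lambda(\Gamma\,{\rm res}\,\cM)\to{\rm res}\,\cM$ is an isomorphism by Theorem \ref{thm-BB}, and hence ${\rm L}_\lambda(\Gamma\cM)\to\cM$ is an isomorphism. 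Therefore $({\rm L}_\lambda,\Gamma(\cdot))$ induces mutually inverse equivalences ${\rm coh}(\Gamma(\Bslm))\cong{\rm coh}(\Bslm)$.

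I expect the only genuine input to be Theorem \ref{thm-BB}; the actual work lies entirely in the middle step, namely checking that restriction of scalars transports the \emph{full} adjunction datum of $({\rm L}_\lambda,\Gamma)$ to that of $({\rm Loc}_\lambda,\Gamma)$ rather than merely matching objects. The one point requiring a little care is the compatibility, at the level of the adjunction unit and counit, between the identification ${\rm res}\circ{\rm L}_\lambda\cong{\rm Loc}_\lambda\circ{\rm res}$ coming from the bimodule isomorphism and the identity ${\rm res}\circ\Gamma=\Gamma\circ{\rm res}$; once these are pinned down everything else is formal, using conservativity of ${\rm res}$.
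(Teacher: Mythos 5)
Your proof is correct, but it takes a genuinely different route from the paper's. You descend the full adjunction datum of Theorem \ref{thm-BB} along the finite free extension $\hDmlK\hookrightarrow\Bslm$: the bimodule isomorphism $\hDmlK\otimes_{\Gamma(\hDmlK)}\Gamma(\Bslm)\car\Bslm$ gives ${\rm res}\circ{\rm L}_\lambda\cong{\rm Loc}_\lambda\circ{\rm res}$, restriction of scalars commutes with $\Gamma$, preserves coherence and is conservative, so both the unit and the counit of $({\rm L}_\lambda,\Gamma)$ are isomorphisms on coherent objects because the corresponding maps for $({\rm Loc}_\lambda,\Gamma)$ are. The paper never transports the counit: it notes (using the same finite freeness and Theorem \ref{thm-BB}) that $\Gamma(\cdot)$ is exact and has no kernel on ${\rm coh}(\Bslm)$, invokes the criterion of \cite{BeilinsonGinzburg}, Lemma 2.4 to reduce everything to showing the unit $M\to\Gamma({\rm L}_\lambda(M))$ is an isomorphism for coherent $M$, and then settles that by taking a finite free presentation of $M$ over $\Gamma(\Bslm)$, observing the unit is trivially an isomorphism on $\Gamma(\Bslm)$ itself, and applying right-exactness of $\Gamma\circ{\rm L}_\lambda$ and the Five Lemma. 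Your approach buys a self-contained argument (no external categorical lemma) and treats unit and counit symmetrically, at the cost of the compatibility check you rightly flag, namely that the identification ${\rm res}\circ{\rm L}_\lambda\cong{\rm Loc}_\lambda\circ{\rm res}$ coming from the bimodule isomorphism matches the two adjunctions' units and counits; that check is routine (an element computation on $d\otimes s\otimes m$) and your use of it, together with coherence of ${\rm res}\,\cM$ over $\hDmlK$ via restricted finite presentations, is sound. The paper's argument avoids any such compatibility verification but needs the exactness and faithfulness of $\Gamma$ on coherent $\Bslm$-modules plus the cited lemma as inputs.
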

\begin{proof}
By Thm. \ref{thm-BB} the functor $\Gamma(\cdot)$ is exact and has
no kernel. By \cite{BeilinsonGinzburg}, Lemma 2.4 it suffices to
check that the counit $\eta_M: M\rightarrow \Gamma({\rm
L}_\lambda(M))$ is an isomorphism for $M\in {\rm
coh}(\Gamma(\Bslm))$. Since $M$ is coherent, it has a finite free
presentation as $\Gamma(\Bslm)$-module. Since
$\eta_{\Gamma(\Bslm)}$ is obviously an isomorphism and $\Gamma(
{\rm L}_\lambda(\cdot))$ is right-exact, the claim follows from
the Five Lemma.
\end{proof}

\end{para}

\begin{para}

Let $U\in\cS$ and $h\in H_m$. Since $\sigma(h)$ preserves the unit
ball $\hDml(U)$ of the $K$-Banach algebra $\hDmlK(U)$ it acts on
the latter by isometries. Recall (cf. (\ref{equ-resD})) the
algebra homomorphism
$$res_{\cD}: \widehat{\cD^\lambda_{n+1}}_{,K}\lra\hDmlK$$ induced by
$\tilde{\cD}_{n+1}\subseteq\tilde{\cD}_n$. Using this homomorphism
we may define a $K$-linear homomorphism
$$res_{\cA}:\cA^\lambda_{m+1,K}\lra\Bslm$$
compatible with $res_{\cD}$ by a procedure very similar to the one
we used for the rings $A^\lambda_{n,K}$. Indeed, the source is a
finite free $\widehat{\cD^\lambda_{n+1}}_{,K}$-module on basis
elements $\overline{g}\overline{h}\in \overline{H}_{m+1}$ where
$\overline{g}\in \overline{H}_{m,m+1}\subset G^{p^m}$ and
$\overline{h}\in \overline{H}_m$ and we put
$$res_{\cA}(\overline{g}\overline{h}):=L_{\cD}(\overline{g})\cdot\overline{h}$$
where $L_{\cD}$ equals the composite of the map
$$L_u: G^{p^m}\stackrel{\cL_{G^{p^m}}^{-1}}{\longrightarrow}
\widehat{U(\frg)_n}^\times\longrightarrow \hUlm^\times $$ used in
the definition of $\tau_u$ and the map
$\widehat{\varphi^\lambda_n}: \hUlm^\times\ra
\Gamma(X,\hDml)^\times$.

\begin{lemma} The morphism $res_{\cA}$ is multiplicative.
\end{lemma}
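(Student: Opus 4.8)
The plan is to verify that $res_{\cA}$ is multiplicative section by section over the base $\cS$, which is enough since $res_{\cA}$ is a morphism of sheaves and $\cS$ generates the topology of $X$. Fix $U\in\cS$ and abbreviate $n=(m-1)e+k$. By the way $res_{\cA}$ was defined on the free $\widehat{\cD^\lambda_{n+1}}_{,K}$-module basis $\overline{H}_{m+1}$, it is semilinear over $res_{\cD}$, i.e. $res_{\cA}(a\xi)=res_{\cD}(a)\,res_{\cA}(\xi)$ for $a$ a local section of $\widehat{\cD^\lambda_{n+1}}_{,K}$ and $\xi$ a local section of $\cA^\lambda_{m+1,K}$, and it restricts to the algebra homomorphism $res_{\cD}$ on $\widehat{\cD^\lambda_{n+1}}_{,K}$. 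Since over $U$ the source is spanned as a left $\widehat{\cD^\lambda_{n+1}}_{,K}(U)$-module by the finitely many units $\overline{h'}=s_{m+1}(h')$, $h'\in H_{m+1}$, and since each $res_{\cA}(\overline{h'})=L_{\cD}(\overline{g})\,\overline{h}$ (where $\overline{h'}=\overline{g}\,\overline{h}$ with $\overline{g}\in\overline{H}_{m,m+1}\subset G^{p^m}$, $\overline{h}\in\overline{H}_m$) is itself a unit of $\Gamma(X,\cA^\lambda_{m,K})^\times$, being a product of the two units $L_{\cD}(\overline{g})$ and $\overline{h}$, a routine expansion of products together with semilinearity reduces the claim to the two identities
\[ \mathrm{(i)}\qquad res_{\cA}(\overline{h'})\,res_{\cD}(b)\,res_{\cA}(\overline{h'})^{-1}=res_{\cD}\big(\overline{h'}\,b\,\overline{h'}^{\,-1}\big), \]
\[ \mathrm{(ii)}\qquad res_{\cA}(\overline{h'})\,res_{\cA}(\overline{k'})=res_{\cD}(\tau)\,res_{\cA}(\overline{h'k'}), \]
for all $h',k'\in H_{m+1}$ and all local sections $b$ of $\widehat{\cD^\lambda_{n+1}}_{,K}$, where $\tau\in\Gamma(X,\widehat{\cD^\lambda_{n+1}}_{,K})^\times$ is the twisting unit with $\overline{h'}\cdot\overline{k'}=\tau\cdot\overline{h'k'}$ in $\cA^\lambda_{m+1,K}$.

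To prove (i) and (ii) I would assemble the following facts, all available from the constructions of Sections \ref{sect-DC}--\ref{sect-lazard}: (a) $L_{\cD}=\widehat{\varphi^\lambda_n}\circ L_u$ restricts to a \emph{group} homomorphism $G^{p^m}\to\Gamma(X,\widehat{\cD^\lambda_n})^\times$, being the composite of the Lazard embedding $\cL_{G^{p^m}}^{-1}$ with the ring homomorphisms $(\cdot)\otimes 1$ and $\widehat{\varphi^\lambda_n}$; (b) the transition morphism $res_{\cD}$ is $G$-equivariant for the canonical $G$-actions $\tilde{\sigma}$ on $\widehat{\cD^\lambda_{n+1}}_{,K}$ and $\widehat{\cD^\lambda_n}_{,K}$, since these are inherited from the $\bG$-equivariant structure of $\tilde{\cD}$ and the filtered inclusions $\tilde{\cD}_{n+1}\subseteq\tilde{\cD}_n$ are $\bG$-equivariant; (c) for $g\in G^{p^m}$ the automorphism $\tilde{\sigma}(g)$ of $\widehat{\cD^\lambda_n}_{,K}$ is conjugation by $L_{\cD}(g)$, which is exactly what was extracted inside the proof of Lemma \ref{lem-OUT}; (d) consequently, using that conjugation by $\overline{h}=s_m(h)\in G$ on $\widehat{\cD^\lambda_n}_{,K}$ inside $\cA^\lambda_{m,K}$ is $\sigma_{\cD}(h)=\tilde{\sigma}(\overline{h})$ and that $\widehat{\varphi^\lambda_n}$ intertwines the adjoint action on $\widehat{\cU^\lambda_n}_{,K}$ with $\tilde{\sigma}$ (while $\cL_{G^{p^m}}^{-1}$ intertwines group conjugation on $G^{p^m}$ with the adjoint action, a standard compatibility of Lazard's isomorphism with $\frg=Lie(G^{p^m})\hookrightarrow D(G^{p^m})$), one gets $\overline{h}\,L_{\cD}(g)\,\overline{h}^{-1}=L_{\cD}\big(\overline{h}\,g\,\overline{h}^{-1}\big)$ for $g\in G^{p^m}$; and (e) $res_{\cD}\circ L_{\cD,n+1}=L_{\cD,n}$ on the subgroup $G^{p^{m+1}}\subseteq G^{p^m}$, which holds because $\cL_{G^{p^{m+1}}}$ and $\cL_{G^{p^m}}$ both restrict to the logarithm series on group elements (so they agree on $G^{p^{m+1}}$) and the square relating $res_u$ and $res_{\cD}$ at the end of Section \ref{sect-DC} commutes. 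Granting (a)--(e), identity (i) is a one-line conjugation computation (expand $res_{\cA}(\overline{h'})=L_{\cD}(\overline{g})\,\overline{h}$ and apply (c), (b), (d)). For (ii), write $\overline{h'}=\overline{g}\,\overline{h}$, $\overline{k'}=\overline{g_2}\,\overline{h_2}$ and $\overline{h'k'}=\overline{g_3}\,\overline{h_3}$ with the $\overline{g}_\bullet\in G^{p^m}$ and the $\overline{h}_\bullet\in\overline{H}_m$, noting $h_3=h h_2$ in $H_m$ and $\overline{h_3}=s_m(h h_2)$ by (\ref{sys-rep}); then transform the left-hand side $L_{\cD}(\overline{g})\,\overline{h}\,L_{\cD}(\overline{g_2})\,\overline{h_2}$ using (d), (a) and the crossed-product relation $\overline{h}\,\overline{h_2}=\overline{h_3}\cdot L_{\cD}\big(\overline{h_3}^{\,-1}\overline{h}\,\overline{h_2}\big)$ in $\cA^\lambda_{m,K}$, and transform the right-hand side using $\tau=L_{\cD,n+1}\big(\overline{h'}\,\overline{k'}\,\overline{h'k'}^{\,-1}\big)$ (the level-$(m+1)$ analogue of (d) applied to the twisting), fact (e), and (a); both sides collapse to $L_{\cD}\big(\overline{h'}\,\overline{k'}\,\overline{h_3}^{\,-1}\big)\cdot\overline{h_3}$.

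The only genuine difficulty is bookkeeping: one must track carefully the conventions of Section \ref{sect-crossed} (the rule $(\alpha\beta)(s)=\beta(\alpha(s))$), the pre-composition with $g\mapsto g^{-1}$ built into $\tilde{\sigma}$ and into $\sigma_{\cD}$, and the directions of the various conjugations, so that the signs in (c), (d) and (e) come out compatibly; once the conventions are pinned down no step is hard. I would also record, as a consistency check, that on \emph{global} sections multiplicativity is automatic: $\widehat{\varphi^\lambda_n}\colon\widehat{\cU^\lambda_n}_{,K}\xrightarrow{\sim}\Gamma(X,\widehat{\cD^\lambda_n}_{,K})$ (Theorem \ref{thm-globalsections}) induces an isomorphism $A^\lambda_{m,K}\xrightarrow{\sim}\Gamma(X,\cA^\lambda_{m,K})$ under which $\Gamma(X,res_{\cA})$ corresponds to the algebra homomorphism $res_a$ of \ref{para-RESA}; but this alone does not prove the lemma, since multiplicativity has to be checked over every $U\in\cS$ and $\widehat{\cD^\lambda_n}_{,K}$ is far from generated by its global sections --- hence the local computation above is needed.
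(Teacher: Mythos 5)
Your proposal is correct and follows the same skeleton as the paper's proof: check multiplicativity on generators, handling products of a local section of $\widehat{\cD^\lambda_{n+1}}_{,K}$ with a basis unit via the $G$-equivariance of $res_\cD$ together with the fact, extracted from the proof of Lemma \ref{lem-OUT}, that an element $g\in G^{p^m}$ acts on $\hDmlK$ by conjugation with the global unit $L_\cD(g)$. The one place where you diverge is the product of two basis units $\overline{h}_1\cdot\overline{h}_2$, $h_i\in H_{m+1}$: the paper disposes of this by quoting the multiplicativity of the ring-level map $res_a$ of \ref{para-RESA} (itself obtained from the identification $A^\lambda_{m,K}\simeq D_{r_m}(G)_\theta$ of Prop. \ref{prop-ident}) and pushing it forward along the algebra map $\widehat{\varphi^\lambda_n}_{,K}$, whereas you redo this step by a direct cocycle computation on the sheaf side, using that $L_\cD$ restricts to a group homomorphism on $G^{p^m}$, the conjugation compatibility $\overline{h}\,L_\cD(g)\,\overline{h}^{-1}=L_\cD(\overline{h}g\overline{h}^{-1})$, and the transition compatibility $res_\cD\circ L_{\cD,n+1}=L_{\cD,n}$ on $G^{p^{m+1}}$; your collapse of both sides to $L_\cD\bigl(\overline{h'}\,\overline{k'}\,\overline{h_3}^{-1}\bigr)\overline{h_3}$ checks out. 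The paper's route is shorter given that Prop. \ref{prop-ident} is already in place; yours is self-contained on the sheaf side and makes explicit (your fact (e)) the compatibility of the Lazard units with the transition maps, which the paper leaves to ``inspection'' inside Prop. \ref{prop-ident}. Your closing observation is also the right one: the global-sections identification alone would not suffice, since multiplicativity must be verified over every $U\in\cS$, which is exactly why both arguments are local in the section variable and global only in the group units.
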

\begin{proof}
Let $\overline{g}\in \overline{H}_{m,m+1}\subset G^{p^m},
\overline{h}\in\overline{H}_{m}$ and $\lambda$ a local section of
$\widehat{\cD^\lambda_{n+1}}_{,K}.$ By definition
$$
res_{\cA}(\overline{h}\cdot\sigma(h)(\lambda))=res_{\cA}(\lambda\cdot\overline{h})=res_\cD(\lambda)\cdot\overline{h}$$
and the right-hand side equals, by equivariance of $res_\cD$,
$$\overline{h}\cdot\sigma(h)(res_\cD(\lambda))=res_{\cA}(\overline{h})\cdot res_\cA(\sigma(h)(\lambda)).$$
Similarly,
$$res_{\cA}(\overline{g}\cdot\sigma(g)(\lambda))=res_{\cA}(\lambda\cdot\overline{g})=res_\cD(\lambda)\cdot L_\cD(\overline{g}).$$
We have shown in the proof of lemma \ref{lem-OUT} that the group
element $g$ operates on the local section $res_\cD(\lambda)$ with
respect to the $G$-structure on the sheaf $\hDmlK$ through
conjugation by the global unit
$L_\cD(\overline{g})\in\Gamma(X,\hDmlK)^\times$. This implies
$$res_\cD(\lambda)\cdot L_\cD(\overline{g})=L_\cD(\overline{g})\cdot\sigma(g)(res_\cD(\lambda))= res_{\cA}(\overline{g})\cdot
res_\cA(\sigma(g)(\lambda)).$$ Finally, we have to show that
$res_\cA$ respects products of the form
$\overline{h}_1\cdot\overline{h}_2$ with $h_1,h_2\in
\overline{H}_{m+1}$. Since the map $res_a$ from \ref{para-RESA} is
multiplicative we have $res_a(\overline{h}_1)\cdot
res_a(\overline{h}_2)=res_a(\overline{h_1}\cdot\overline{h_2})$.
Applying the multiplicative map $\widehat{\varphi^\lambda_n}_{,K}$
and the preceding lemma yields the claim.
\end{proof}

We therefore have the projective system $(\Bslm)_m$ of sheaves of
$K$-algebras. By Cor. \ref{cor-conditions1} and Prop.
\ref{prop-hypothesis} it satisfies the conditions (i),(ii),(iii)
of section \ref{sect-crossed}. Hence, letting
$$\Bsl:=\varprojlim_m \Bslm$$ be the projective limit we have the
chain of full abelian subcategories
$${\rm coh}(\Bsl)\subset\sC_{\Bsl}\subset {\rm Mod}(\Bsl)$$ and
the equivalence of categories
\begin{numequation}\label{equ-equivalence2}\Gamma: {\rm coh}((\Bslm)_m)\car\sC_{\Bsl}.\end{numequation}
\end{para}
\begin{para}
We shall need a simple lemma on the behaviour of the categories
${\rm coh}(\Bslm)$ relative to the restriction maps $res_\cA$. Let
$(\cM_m)_m$ be a projective system with $\cM_m\in {\rm
coh}(\Bslm)$. We have a natural map of $\Gamma(\Bslm)$-modules
$$\Gamma(\Bslm)\otimes_{\Gamma(\Bslmp)} \Gamma(\cM_{m+1})\lra \Gamma(\Bslm\otimes_{\Bslmp}
\cM_{m+1}).$$
\begin{lemma}\label{lem-BBneu2}
Suppose the weight $\lambda+\rho\in\frt^*_K$ is dominant and
regular. The above map is an isomorphism.
\end{lemma}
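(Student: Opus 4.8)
The plan is to derive Lemma \ref{lem-BBneu2} from the Beilinson--Bernstein equivalences of Proposition \ref{prop-BBneu} applied at the two deformation levels $m$ and $m+1$, combined with associativity of the tensor product. Throughout, write
\[
{\rm L}^{(m)}_\lambda(-):=\Bslm\otimes_{\Gamma(\Bslm)}(-)\qquad\text{and}\qquad{\rm L}^{(m+1)}_\lambda(-):=\Bslmp\otimes_{\Gamma(\Bslmp)}(-)
\]
for the localization functors at the respective levels, and recall from Proposition \ref{prop-BBneu} that each of the pairs $({\rm L}^{(m)}_\lambda,\Gamma(-))$ and $({\rm L}^{(m+1)}_\lambda,\Gamma(-))$ restricts to mutually inverse equivalences of categories on coherent modules; in particular, for a coherent $\Gamma(\Bslm)$-module $N$ the canonical map $N\to\Gamma({\rm L}^{(m)}_\lambda(N))$ is an isomorphism, and $\Gamma(-)$ sends coherent modules to coherent modules.

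First I would put $M_{m+1}:=\Gamma(\cM_{m+1})$, which is a coherent $\Gamma(\Bslmp)$-module (cf.\ the discussion preceding Proposition \ref{prop-BBneu}), so that by that proposition at level $m+1$ the canonical map $\cM_{m+1}\to{\rm L}^{(m+1)}_\lambda(M_{m+1})$ is an isomorphism. Viewing $\Gamma(\Bslm)$ as a $\Gamma(\Bslmp)$-algebra via $\Gamma(res_\cA)$, associativity of the tensor product then yields natural isomorphisms of $\Bslm$-modules
\[
\Bslm\otimes_{\Bslmp}\cM_{m+1}\;\car\;\Bslm\otimes_{\Gamma(\Bslmp)}M_{m+1}\;\car\;{\rm L}^{(m)}_\lambda\bigl(\Gamma(\Bslm)\otimes_{\Gamma(\Bslmp)}M_{m+1}\bigr).
\]
Here one notes that $N:=\Gamma(\Bslm)\otimes_{\Gamma(\Bslmp)}M_{m+1}$ is finitely generated, hence coherent, over the noetherian ring $\Gamma(\Bslm)\cong\hUlmK*H_m$ (Theorem \ref{thm-globalsections} and 2.1), and that $\Bslm\otimes_{\Bslmp}\cM_{m+1}$ is coherent as well: locally $\cM_{m+1}$ admits a finite presentation over the sheaf of coherent rings $\Bslmp$, and applying the right exact functor $\Bslm\otimes_{\Bslmp}(-)$ turns it into a finite presentation over $\Bslm$. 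Applying $\Gamma(-)$ to the displayed isomorphisms and using that $N\to\Gamma({\rm L}^{(m)}_\lambda(N))$ is an isomorphism, I obtain
\[
\Gamma\bigl(\Bslm\otimes_{\Bslmp}\cM_{m+1}\bigr)\;\cong\;\Gamma\bigl({\rm L}^{(m)}_\lambda(N)\bigr)\;\cong\;N\;=\;\Gamma(\Bslm)\otimes_{\Gamma(\Bslmp)}\Gamma(\cM_{m+1}).
\]

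Finally I would check that the composite of the natural isomorphisms just produced coincides with the map in the statement. Unwinding the equivalence of Proposition \ref{prop-BBneu} at both levels, both maps send $a\otimes s$, for $a\in\Gamma(\Bslm)$ and $s\in\Gamma(\cM_{m+1})$, to $a\cdot(1\otimes s)$, where $1\otimes s$ is the image of $s$ under the canonical $\Gamma(\Bslmp)$-linear map $\Gamma(\cM_{m+1})\to\Gamma(\Bslm\otimes_{\Bslmp}\cM_{m+1})$; hence they agree. I expect this last identification of natural transformations---and the attendant bookkeeping of which ring and sheaf maps are in play---to be the only delicate point, the real content of the argument being carried entirely by Proposition \ref{prop-BBneu} and by the fact, established in Corollary \ref{cor-conditions1} and Proposition \ref{prop-hypothesis}, that the $\Bslm$ form a projective system of sheaves of coherent rings with flat transition maps.
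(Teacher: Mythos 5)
Your argument is correct, and it carries the same essential content as the paper's proof --- Proposition~\ref{prop-BBneu} at the two levels $m$ and $m+1$ --- but it is organized differently. The paper uses the level-$(m+1)$ equivalence only to produce a global finite presentation $(\Bslmp)^{b}\to(\Bslmp)^{a}\to\cM_{m+1}\to 0$, notes that both sides of the map are right-exact functors of $\cM_{m+1}$ (exactness of $\Gamma$ on coherent modules), and concludes by the Five Lemma after reducing to the obvious case $\cM_{m+1}=\Bslmp$. You instead transport through both equivalences: the counit isomorphism $\cM_{m+1}\cong\Bslmp\otimes_{\Gamma(\Bslmp)}\Gamma(\cM_{m+1})$ together with associativity identifies $\Bslm\otimes_{\Bslmp}\cM_{m+1}$ with ${\rm L}^{(m)}_\lambda(N)$ for $N=\Gamma(\Bslm)\otimes_{\Gamma(\Bslmp)}\Gamma(\cM_{m+1})$, and the unit isomorphism then computes its global sections. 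Your route trades the Five Lemma for the (genuinely necessary, and correctly handled) check that the composite of your natural isomorphisms is the map in the statement; the paper's route avoids all such bookkeeping of natural transformations but is less explicit about what the isomorphism does.

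One small blemish: to see that $N$ is coherent you invoke Theorem~\ref{thm-globalsections}, hence the hypotheses (H1)--(H3), which are not among the assumptions of the lemma and are not used in the paper's proof. This is easily repaired without them: $\Gamma(\cM_{m+1})$ is a coherent, hence finitely presented, $\Gamma(\Bslmp)$-module, so $N$ is finitely presented over $\Gamma(\Bslm)$; and $\Gamma(\Bslm)$ is a coherent ring (apply the remark preceding Proposition~\ref{prop-BBneu} to $\cM=\Bslm$, which is coherent over itself because $\Bslm$ is a sheaf of coherent rings), so finitely presented implies coherent. With that substitution your proof works under exactly the lemma's hypotheses.
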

\begin{proof}
The sheaf $\Bslm$ is a sheaf of coherent rings. The $\Bslm$-module
$\Bslm\otimes_{\Bslmp} \cM_{m+1}$ is hence of finite presentation
and therefore coherent (cf. (2.4)). According to the above
proposition, $\Gamma(\cdot)$ is exact on coherent modules over
either $\Bslm$ or $\Bslmp$ and therefore both sides of our map are
right-exact functors in $\cM_{m+1}$. By the above proposition, we
have a global finite presentation $(\Bslmp)^{b}\rightarrow
(\Bslmp)^{a}\rightarrow \cM_{m+1}\rightarrow 0$. The Five Lemma
reduces us therefore to the case $\cM_{m+1}=\Bslmp$ which is
obvious.
\end{proof}
\end{para}

\begin{para}
We now compute the global sections of the sheaf $\Bsl$ under the
hypothesis (H1)-(H3) of (4.10). To this end, recall the projective
system
$$A^\lambda_{m,K}=\hUlmK*_{\sigma_u,\tau_u} H_m$$ where $n=(m-1)e+k$ and
$H_m=G/G^{p^m}$. The algebra homomorphism
$$\widehat{\varphi^\lambda_n}_{,K}:\hUlmK\lra\hDmlK$$ extends to
a linear map
$$A^\lambda_{m,K}\lra\Bslm\hskip25pt {\rm (*)}$$ preserving the basis
elements $\overline{h}$ for $h\in H_m$ on both sides.

\begin{lemma}
The linear map {$\rm (*)$} is multiplicative and compatible with
variation in $m$.
\end{lemma}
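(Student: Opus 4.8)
The plan is to recognize $(*)$ as the $\widehat{\varphi^\lambda_n}_{,K}$-semilinear extension of a ring homomorphism along the distinguished units, and to reduce both claims to facts already available. Since $A^\lambda_{m,K}=\hUlmK*_{\sigma_u,\tau_u}H_m$ is free over $\hUlmK$ on $\{\overline{h}:h\in H_m\}$, the map $(*)=(*)_m$ is the unique $K$-linear map with $(*)(a\overline{h})=\widehat{\varphi^\lambda_n}_{,K}(a)\overline{h}$, and is well defined. By the usual presentation of a crossed product (Section~\ref{sect-crossed}), $A^\lambda_{m,K}$ is generated over $K$ by $\hUlmK$ and the units $\overline{h}$ subject to $\overline{h}^{-1}a\overline{h}=\sigma_u(h)(a)$ and $\overline{x}\cdot\overline{y}=\overline{xy}\,\tau_u(x,y)$, and likewise for $\Bslm=\hDmlK*_{\sigma,\tau}H_m$. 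Hence $(*)$ is multiplicative as soon as $\widehat{\varphi^\lambda_n}_{,K}$ is a ring homomorphism intertwining $(\sigma_u,\tau_u)$ with $(\sigma=\sigma_\cD,\tau=\tau_\cD)$; these are the three statements I would verify.

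That $\widehat{\varphi^\lambda_n}_{,K}$ is multiplicative is part of its construction in Section~\ref{sect-DC}. For the twisting, $\widehat{\varphi^\lambda_n}_{,K}\circ\tau_u=\tau_\cD$ holds by the very definition of $\tau_\cD$ given just before Lemma~\ref{lem-OUT}; note that $\tau_u$ takes values in $\hUlm^\times$, on which $\widehat{\varphi^\lambda_n}$ is defined and which it carries into $\Gamma(X,\hDml)^\times$ as a unital ring map. For the action, I would use that $\sigma_u$ and $\sigma_\cD$ are both obtained by composing the section $s_m\colon H_m\to G$ with the respective $G$-actions on $\hUlm$ (via $g\mapsto{\rm Ad}(g^{-1})$) and on $\hDml$ (the equivariant structure precomposed with $g\mapsto g^{-1}$), and then invoke the intertwining property of $\widehat{\varphi^\lambda_n}$ recalled before Lemma~\ref{lem-OUT} (\cite{Milicic93}, \S3), which says precisely that $\widehat{\varphi^\lambda_n}$ is $G$-equivariant for these two actions; composing with $s_m$ gives $\widehat{\varphi^\lambda_n}_{,K}\circ\sigma_u(h)=\sigma_\cD(h)\circ\widehat{\varphi^\lambda_n}_{,K}$ for every $h\in H_m$. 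This yields multiplicativity of $(*)$.

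For compatibility with variation in $m$ I would check that $res_\cA\circ(*)_{m+1}=(*)_m\circ res_a$ as maps $A^\lambda_{m+1,K}\to\Bslm$, where $(*)_{m+1}\colon A^\lambda_{m+1,K}\to\Bslmp$ is the analogous map one level up. By~(\ref{sys-rep}) the source is free over $\widehat{\cU^\lambda_{n+1}}_{,K}$ on the elements $\overline{g}\overline{h}$ with $\overline{g}\in\overline{H}_{m,m+1}$, $\overline{h}\in\overline{H}_m$, and the four maps $(*)_{m+1}$, $res_a$, $(*)_m$, $res_\cA$ are semilinear over the ring maps $\widehat{\varphi^\lambda_{n+1}}_{,K}$, $res_u$, $\widehat{\varphi^\lambda_n}_{,K}$, $res_\cD$ respectively. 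Hence $res_\cA\circ(*)_{m+1}$ is semilinear over $res_\cD\circ\widehat{\varphi^\lambda_{n+1}}_{,K}$ while $(*)_m\circ res_a$ is semilinear over $\widehat{\varphi^\lambda_n}_{,K}\circ res_u$, and these two ring maps coincide by the commutative diagram at the end of Section~\ref{sect-DC}. It therefore suffices to compare the two composites on each basis element $\overline{g}\overline{h}$: here $res_\cA((*)_{m+1}(\overline{g}\overline{h}))=res_\cA(\overline{g}\overline{h})=L_\cD(\overline{g})\overline{h}$ by definition of $res_\cA$, while $(*)_m(res_a(\overline{g}\overline{h}))=(*)_m(L_u(\overline{g})\overline{h})=\widehat{\varphi^\lambda_n}_{,K}(L_u(\overline{g}))\overline{h}=L_\cD(\overline{g})\overline{h}$ since $L_\cD=\widehat{\varphi^\lambda_n}\circ L_u$ by definition. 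The square thus commutes.

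The only step carrying genuine content — everything else being bookkeeping with the crossed-product multiplication and semilinearity — is the equivariance of $\widehat{\varphi^\lambda_n}$ used in the multiplicativity step: one must match the $G$-action on $\hDml$ (with its $g\mapsto g^{-1}$ twist) against the adjoint action ${\rm Ad}(g^{-1})$ on $\hUlm$ in exactly the sense needed. This rests on \cite{Milicic93}, \S3 together with the identification, established inside the proof of Lemma~\ref{lem-OUT}, of the $G$-action on $\hDmlK$ with conjugation by $\widehat{\varphi^\lambda_n}_{,K}(x)$ for $g=\cL_{G^{p^m}}(x)$. I expect the main care to lie in keeping the left/right and inversion conventions consistent throughout.
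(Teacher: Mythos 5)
Your proof is correct and follows essentially the same route as the paper: multiplicativity is reduced to the facts that $\widehat{\varphi^\lambda_n}_{,K}$ is a ring homomorphism intertwining $\sigma_u$ with $\sigma_\cD$ (via the equivariance recalled before Lemma \ref{lem-OUT}) and that $\tau_\cD=\widehat{\varphi^\lambda_n}\circ\tau_u$ by definition, while compatibility with variation in $m$ comes down to $L_\cD=\widehat{\varphi^\lambda_n}\circ L_u$ together with the commutative square relating $res_u$, $res_\cD$ and the maps $\widehat{\varphi^\lambda_n}_{,K}$. Your write-up merely makes explicit the crossed-product bookkeeping and the basis-element check that the paper leaves implicit.
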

\begin{proof}
The map {$\rm (*)$} respects the action maps $\sigma_u,\sigma$
since $\widehat{\varphi^\lambda_n}_{,K}$ intertwines the adjoint
action with the $G$-action on the target. Since moreover
$\tau=\widehat{\varphi^\lambda_n}\circ\tau_u$ the map ${\rm (*)}$
is multiplicative. The final assertion is obvious from
$L_\cD=\widehat{\varphi^\lambda_n}\circ L_u$.
\end{proof}
\begin{cor}\label{cor-H}\label{cor-H2}
If {\rm (H1)-(H3)} hold, then ${\rm (*)}$ induces a algebra
isomorphisms
$$A^\lambda_{m,K}\car\Gamma(X,\Bslm)$$
for all $m$ and, in the projective limit, an algebra isomorphism
$$D(G)_\theta\car\Gamma(X,\Bsl).$$
\end{cor}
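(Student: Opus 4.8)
The plan is to reduce both assertions to Theorem~\ref{thm-globalsections} of Ardakov--Wadsley by exploiting the crossed product structure present on both sides. Fix $m\geq 1$ and put $n=(m-1)e+k$. By construction $A^\lambda_{m,K}=\hUlmK*_{\sigma_u,\tau_u}H_m$ is free as a left $\hUlmK$-module on the set $\{\overline{h}:h\in H_m\}$, while the sheaf $\Bslm=\hDmlK*_{\sigma,\tau}H_m$ is free as a left $\hDmlK$-module on the same set, the $\overline{h}$ being global units. Since $H_m$ is finite, $\Gamma(X,-)$ commutes with this direct sum, so $\Gamma(X,\Bslm)=\bigoplus_{h\in H_m}\Gamma(X,\hDmlK)\,\overline{h}$ as a left $\Gamma(X,\hDmlK)$-module; moreover the relations of a crossed product sheaf pass to global sections, so $\Gamma(X,\Bslm)$ is itself a crossed product of the ring $\Gamma(X,\hDmlK)$ by $H_m$, with action the restriction of $\sigma$ to global sections and twisting $\tau$ (which already takes values in $\Gamma(X,\hDmlK)^\times$).

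By the lemma immediately preceding the corollary, the linear map ${\rm (*)}\colon A^\lambda_{m,K}\to\Gamma(X,\Bslm)$ is multiplicative, fixes every $\overline{h}$, and on coefficients is the map $\widehat{\varphi^\lambda_n}_{,K}\colon\hUlmK\to\Gamma(X,\hDmlK)$ obtained from $\widehat{\varphi^\lambda_n}$ by passing to global sections; concretely it sends $\sum_h a_h\overline{h}$ to $\sum_h\widehat{\varphi^\lambda_n}_{,K}(a_h)\,\overline{h}$. Under the hypotheses (H1)--(H3), Theorem~\ref{thm-globalsections} asserts that $\widehat{\varphi^\lambda_n}_{,K}$ is an algebra isomorphism, and it moreover intertwines $\sigma_u$ with $\sigma$ and carries $\tau_u$ to $\tau$ (again the content of the preceding lemma). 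Hence ${\rm (*)}$ is a homomorphism of free left modules taking the distinguished basis bijectively onto the distinguished basis and given on coefficients by a bijection, so ${\rm (*)}$ is itself bijective; being multiplicative, it is an algebra isomorphism $A^\lambda_{m,K}\car\Gamma(X,\Bslm)$, for every $m\geq 1$.

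For the second assertion, recall that $\Gamma(X,-)$ commutes with projective limits of sheaves (\cite{EGA_I}, 0.3.2.6), so $\Gamma(X,\Bsl)=\varprojlim_m\Gamma(X,\Bslm)$, while $A^\lambda_K=\varprojlim_m A^\lambda_{m,K}$ by definition. Since the maps ${\rm (*)}$ are compatible with the transition maps $res_a$ on the source and $res_{\cA}$ on the target (once more by the preceding lemma), they pass to the limit and yield an algebra isomorphism $A^\lambda_K\car\Gamma(X,\Bsl)$. Composing with the inverse of the isomorphism $D(G)_\theta\car A^\lambda_K$ supplied by Proposition~\ref{prop-ident} produces the desired algebra isomorphism $D(G)_\theta\car\Gamma(X,\Bsl)$.

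The only substantial input is Theorem~\ref{thm-globalsections}, a deep computation from \cite{AW}; everything else is formal bookkeeping with the crossed product formalism of Section~\ref{sect-crossed}. The step that requires the most care is the passage to global sections: one must verify that $\Gamma(X,-)$ carries $\Bslm$ to an honest crossed product of rings whose action and twisting are induced by $\sigma$ and $\tau$, and that under this identification ${\rm (*)}$ is precisely the crossed product homomorphism induced by $\widehat{\varphi^\lambda_n}_{,K}$ on coefficients, so that its bijectivity genuinely reduces to bijectivity of $\widehat{\varphi^\lambda_n}_{,K}$.
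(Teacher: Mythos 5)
Your argument is correct and follows the paper's own route: the first isomorphism is exactly the reduction to Theorem~\ref{thm-globalsections} via freeness of both sides over $\hUlmK$ resp.\ $\hDmlK$ on the common basis $\overline{H}_m$ (with multiplicativity supplied by the preceding lemma), and the second follows, as in the paper, by passing to the projective limit using the compatibility with $res_a$, $res_{\cA}$ and invoking Prop.~\ref{prop-ident}. The only blemish is the final sentence's bookkeeping of the direction of the isomorphism in Prop.~\ref{prop-ident} (which is stated as $A^\lambda_K\car D(G)_\theta$), but this does not affect the argument.
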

\begin{proof}
The first statement follows from Thm. \ref{thm-globalsections}.
The second statement follows then from the preceding lemma and
Prop. \ref{prop-ident}.
\end{proof}

\end{para}

\begin{para}
Let now $G=G(\pi_L^k)$ be an {\it any} congruence subgroup for
$k\geq 1$. The above discussion for the case $k\geq e$ extends in
an obvious way to the general case as follows. Let
$\tG=G(\pi_L^{\tilde{k}})$ for $\tilde{k}\geq\max (e,k)$ and let
$H_m, A_{m,K}^\lambda, A^\lambda_K, \cA^\lambda_{m,K},
\cA^\lambda_K$ be formed relative to $\tG$ as described above. We
have the finite group $G_m=G/\tG^{p^m}$ containing $H_m$ as a
normal subgroup. We choose a system of representatives $\cR$ for
$G_m/H_m$ in $G$ containing $1\in G$ and let
$\overline{G}_m=\overline{H}_m\cdot\cR$ (in obvious notation) be
the induced system of representatives of $G_m$ in $G$. This
extends the maps $s_m$ and $\tau'$ from $H_m$ to $G_m$. We
therefore obtain extensions of $\sigma_u$ and $\tau_u$ to $G_m$
and corresponding crossed products
$B^\lambda_{m,K}=\widehat{\cU_n^\lambda}_{,K}*G_m$ and
$\cB^\lambda_{m,K}=\widehat{\cD^\lambda_n}_{,K}*G_m.$ According to
\cite{Passman}, Lemma 1.3 these are actually crossed products over
$A^\lambda_{m,K}$ and $\cA^\lambda_{m,K}$ relative to the finite
group $G_m/H_m=G/\tG$ respectively. Passing to the limit over $m$
we obtain crossed products $$B^\lambda_{K}=A^\lambda_K * (G/\tG)
\hskip10pt {\rm and}\hskip10pt \cB^\lambda_K=\cA^\lambda_K *
(G/\tG).$$ Using $D(G)_\theta=D(\tG)_\theta *(G/\tG)$ one deduces,
in case $(H1)-(H3)$ hold, without difficulty an algebra
isomorphism $D(G)_\theta\car \Gamma(X,\cB^\lambda_K)$ extending
Cor. \ref{cor-H2}. Finally, if $\lambda+\rho\in\frt^*_K$ is
dominant and regular, we have the analogues of Prop.
\ref{prop-BBneu} and Lem. \ref{lem-BBneu2} for the projective
system $(\cB^\lambda_{m,K})_m$ which follow with the same proofs.

\end{para}

\section{The equivalence of categories}
Let in the following
$$G=G(\pi_L^{k})=\ker\; ( \bG(o_L)\longrightarrow
\bG(o_L/\pi_L^{k}o_L) )$$ be a congruence subgroup for some $k\geq
1$. We assume $p\neq 2$ and that the hypothesis (H1)-(H3) of
(4.10) are satisfied.
\begin{para}
Recall that $Z(\frg_K)$ equals the center of the universal
enveloping algebra $U(\frg_K)$ and that $Z(\frg_K)$ lies in the
center of the ring $D(G)$
(\cite{ST4}, Prop. 3.7). We fix a character
$$\theta: Z(\frg_K)\longrightarrow K$$ and the central reduction
\[ D(G)_\theta:=D(G)\otimes_{Z(\frg_K),\theta} K\]
of $D(G)$. As we have explained the algebra
$D(G)_\theta=\varprojlim_m D_{r_m}(G)_\theta$ is again
Fr\'echet-Stein and its coadmissible modules
$$\sC_{G,\theta}=\sC_G\cap {\rm Mod}(D(G)_\theta)$$ are in duality
with admissible locally analytic $G$-representations having
infinitesimal character $\theta$.

\vskip8pt

Remark: Building on a $p$-adic version due to Ardakov-Wadsley
(\cite{AW}, \S8) of the famous Quillen's lemma it is shown in the
preprint \cite{DospSchraen} that any topologically irreducible
admissible locally analytic $G$-representation admits, up to a
finite extension of $K$, an infinitesimal character.

\vskip8pt

Up to a finite extension of $K$ there is an element
$\lambda\in\frt_K^*$ that pulls back to $\theta$ under the
Harish-Chandra homomorphism $\phi_K$. We fix such a $\lambda$.
There is then a minimal $m(\lambda)\geq 0$ such that
$\lambda(\pi_L^{m(\lambda)}\frt)\subseteq o_K$, i.e.
$$\lambda\in\Hom_{o_L}(\pi_L^{m}\frt,o_K)$$ for all $m>
m(\lambda)$. We tacitly restrict in the following to numbers $m>
m(\lambda)$. According to the final paragraph in the preceding
section, we may associate to $G$ a sheaf of rings $\cA^\lambda_K$
on $X$ which comes with an isomorphism
$$D(G)_\theta\car \Gamma(X,\cA^\lambda_K)$$
and an abelian category of coadmissible sheaves
$\sC_{\cA^\lambda_K}$. It allows to introduce the functor
\[\begin{array}{ccc}
{\rm L}_\lambda: {\rm Mod}(D(G)_\theta)\rightarrow
{\rm Mod}(\Bsl), &  & M\mapsto \Bsl\otimes_{D(G)_\theta }M. \\
\end{array}\] It is left
adjoint to the global section functor $\Gamma(X,.)$ and therefore
right exact. The two units of the adjunction induce natural maps
\begin{equation}\label{equ-adjunction2}
\begin{array}{ccc}
 M\lra \Gamma\circ {\rm L}_\lambda (M),  &  &  {\rm L}_\lambda\circ\Gamma(\cM)\lra\cM \\
\end{array}
\end{equation}
for any $M\in {\rm Mod}(D(G)_\theta)$ and $\cM\in {\rm
Mod}(\Bsl)$. We do not know whether ${\rm
L}_\lambda(M)\in\sC_{\Bsl}$ for a general $M\in\sC_{G,\theta}$.
However, we still have following.
\begin{thm}\label{thm-eigen}
Suppose the weight $\lambda+\rho\in\frt^*_K$ is dominant and
regular. The functor $\Gamma(X,.)$ induces an equivalence of
categories
\[\sC_{\Bsl}\car \sC_{G,\theta}.\] A quasi-inverse is given by the functor $${\rm \tilde{L}}_\lambda: M\mapsto \varprojlim_m~
(\Bslm\otimes_{D(G)_\theta} M).$$ If $M$ is a finitely presented
$D(G)_\theta$-module the natural morphism ${\rm L}_\lambda(M) \car
{\rm \tilde{L}}_\lambda(M)$ is an isomorphism.
\end{thm}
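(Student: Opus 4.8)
The plan is to pass to projective systems on both sides and reduce everything to the level-wise Beilinson--Bernstein equivalence of Proposition~\ref{prop-BBneu}. By Corollary~\ref{cor-H2} and Proposition~\ref{prop-ident} the global sections $\Gamma(X,\Bslm)$ are canonically $\Blm\cong D_{r_m}(G)_\theta$, compatibly in $m$; hence the Fr\'echet--Stein structure of $D(G)_\theta$ (\cite{ST5}) identifies $\sC_{G,\theta}$ with the category of projective systems $(M_m)_m$ of finitely generated $\Gamma(\Bslm)$-modules satisfying the base change isomorphisms $\Gamma(\Bslm)\otimes_{\Gamma(\Bslmp)}M_{m+1}\car M_m$, whereas (\ref{equ-equivalence2}) identifies $\sC_{\Bsl}$ with ${\rm coh}((\Bslm)_m)$ via $\cM\mapsto(\Bslm\otimes_{\Bsl}\cM)_m$ (Proposition~\ref{prop-co}).

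First I would build a functor ${\rm coh}((\Bslm)_m)\to\sC_{G,\theta}$ by $(\cM_m)_m\mapsto(\Gamma(X,\cM_m))_m$: each $\Gamma(X,\cM_m)$ is coherent over $\Gamma(\Bslm)$ by Proposition~\ref{prop-BBneu}, and the required base change isomorphism is precisely Lemma~\ref{lem-BBneu2} combined with the identification $\Bslm\otimes_{\Bslmp}\cM_{m+1}\car\cM_m$ coming from $(\cM_m)_m\in{\rm coh}((\Bslm)_m)$. In the reverse direction I would set $(M_m)_m\mapsto({\rm L}_\lambda(M_m))_m$; by Proposition~\ref{prop-BBneu} each ${\rm L}_\lambda(M_m)$ is coherent over $\Bslm$, and the transition isomorphism $\Bslm\otimes_{\Bslmp}{\rm L}_\lambda(M_{m+1})\car{\rm L}_\lambda(M_m)$ follows from transitivity of tensor products and the base change isomorphism for $(M_m)_m$. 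By Proposition~\ref{prop-BBneu} these two functors are level-wise mutually inverse, hence mutually inverse equivalences between ${\rm coh}((\Bslm)_m)$ and $\sC_{G,\theta}$. Composing with (\ref{equ-equivalence2}) and using $\Gamma(X,\varprojlim_m\cM_m)=\varprojlim_m\Gamma(X,\cM_m)$ (\cite{EGA_I}, 0.3.2.6), the global section functor $\Gamma(X,-)$ coincides with this composite and is therefore an equivalence $\sC_\Bsl\car\sC_{G,\theta}$. Tracing the quasi-inverse: for coadmissible $M$ one has $M_m=D_{r_m}(G)_\theta\otimes_{D(G)_\theta}M=\Gamma(\Bslm)\otimes_{D(G)_\theta}M$ and hence $\Bslm\otimes_{\Gamma(\Bslm)}M_m=\Bslm\otimes_{D(G)_\theta}M$, so the quasi-inverse is $M\mapsto\varprojlim_m(\Bslm\otimes_{D(G)_\theta}M)={\rm \tilde{L}}_\lambda(M)$, which in particular shows ${\rm \tilde{L}}_\lambda(M)\in\sC_{\Bsl}$.

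For the final assertion, given a finitely presented $D(G)_\theta$-module $M$ I would choose a presentation $D(G)_\theta^{\,b}\to D(G)_\theta^{\,a}\to M\to 0$ and apply both ${\rm L}_\lambda=\Bsl\otimes_{D(G)_\theta}(-)$ and ${\rm \tilde{L}}_\lambda=\varprojlim_m\big(\Bslm\otimes_{D(G)_\theta}(-)\big)$. Both are right exact --- for ${\rm \tilde{L}}_\lambda$ because the projective systems arising from a finite presentation lie in ${\rm coh}((\Bslm)_m)$ and $\Gamma$ is exact there (Proposition~\ref{prop-co}) --- and both agree on $D(G)_\theta$, where each yields $\Bsl=\varprojlim_m\Bslm$; the natural transformation ${\rm L}_\lambda\to{\rm \tilde{L}}_\lambda$ then fits into a commutative ladder with exact rows whose two leftmost vertical arrows are isomorphisms, so the Five Lemma gives ${\rm L}_\lambda(M)\car{\rm \tilde{L}}_\lambda(M)$.

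I expect the main obstacle to be organizational rather than conceptual: the genuine content is already packaged in Proposition~\ref{prop-BBneu} (level-wise) and in Lemma~\ref{lem-BBneu2}, and the work here is to check that the crossed-product transition maps $res_{\cA}$ and $res_a$ are matched under ${\rm L}_\lambda$ and $\Gamma(X,-)$ --- so that the level-wise equivalences genuinely assemble into an equivalence of the projective-system categories --- and to verify that ${\rm \tilde{L}}_\lambda(M)$ is coadmissible for every coadmissible $M$, i.e. that $(\Bslm\otimes_{D(G)_\theta}M)_m$ is a coherent projective system in ${\rm coh}((\Bslm)_m)$.
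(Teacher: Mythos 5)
Your proposal is correct and follows essentially the same route as the paper: identify $\sC_{\Bsl}$ with ${\rm coh}((\Bslm)_m)$ via (\ref{equ-equivalence2}) and $\sC_{G,\theta}$ with coherent systems over $\Gamma(\Bslm)\cong D_{r_m}(G)_\theta$ (Cor.~\ref{cor-H2}, Prop.~\ref{prop-ident}), then assemble the level-wise equivalences of Prop.~\ref{prop-BBneu} using Lemma~\ref{lem-BBneu2}, and read off ${\rm \tilde{L}}_\lambda$ as the quasi-inverse. Your treatment of the finitely presented case (Five Lemma on a presentation, with right-exactness of ${\rm \tilde{L}}_\lambda$ coming from Prop.~\ref{prop-co}) is a harmless variant of the paper's argument, which instead notes that ${\rm L}_\lambda(M)$ is coadmissible and checks the comparison map is an isomorphism after applying each $\Bslm\otimes_{\Bsl}(\cdot)$; both rest on the same exactness input.
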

\begin{proof}
Let $(\cM_m)_m$ be a projective system with coherent
$\cA^\lambda_{m,K}$-modules $\cM_m$. Put $M_m:=\Gamma(\cM_m)\in
{\rm Mod}^{\rm fg}(A^\lambda_{m,K})$. According to the Cor.
\ref{lem-BBneu2} the natural map
$$\Blm\otimes_{\Blmp}M_{m+1}\lra M_m$$ is an isomorphism precisely
when the natural map $$\Bslm\otimes_{\Bslmp}\cM_{m+1}\lra \cM_m$$
is an isomorphism, i.e. precisely when $(\cM_m)_m\in {\rm
coh}((\Bslm)_m)$. This means that the collection of category
equivalences ${\rm coh}(\Bslm)\car {\rm Mod}^{\rm fg}(\Blm)$
induced by $\Gamma(\cdot)$ induces a category equivalence
$$(\cM_m)_m\mapsto (\Gamma(\cM_m))_m$$

between ${\rm coh}((\Bslm)_m)$ and the abelian category of
families $(M_m)_m$ of finitely generated $\Blm$-modules with the
property $\Blm\otimes_{\Blmp}M_{m+1}\car M_m$. By definition the
latter equals the category of coherent sheafs for the distribution
algebra $D(G)_\theta$. We therefore have the equivalence
$$\sC_{\Bsl}\car\sC_{G,\theta}$$ given by the composite
$$\cM\mapsto (\Bslm\otimes_{\Bsl}\cM)_m\mapsto (\Gamma(\Bslm\otimes_{\Bsl}\cM))_m\mapsto \varprojlim_m
\Gamma(\Bslm\otimes_{\Bsl}\cM)$$ and the right-hand term equals
$\Gamma(\varprojlim_m\Bslm\otimes_{\Bsl}\cM)=\Gamma(\cM)$
according to Prop. \ref{prop-co}. From the definitions we read off
that a quasi-inverse is given by the functor $${\rm
\tilde{L}}_\lambda: M\mapsto \varprojlim_m~
(\Bslm\otimes_{D(G)_\theta} M).$$ We have a natural morphism $f:
{\rm L}_\lambda(M)\ra {\rm \tilde{L}}_\lambda(M)$ induced by the
maps $\Bsl\ra\Bslm$ where the target is coadmissible. If $M$ is
finitely presented, the source is also coadmissible. Since the
morphism becomes an isomorphism for all $m$ after applying
$\Bslm\otimes_{\Bsl}(\cdot)$, it is an isomorphism.
\end{proof}

\begin{cor} Let $Rep(G)_\theta$ be the category
    of admissible locally analytic $G$-representations over $K$ with
    infinitesimal character $\theta$. Let $\lambda\in\frt^*_K$ be a weight that corresponds to $\theta$ under the (untwisted)
    Harish-Chandra homomorphism. Suppose $\lambda+\rho$ is dominant and regular. There is an equivalence of
    categories $$\sC_{\Bsl} \car Rep(G)_\theta$$
    given by $ \cM\mapsto \Gamma(X,\cM)'_b$.
\end{cor}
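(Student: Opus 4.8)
The plan is to deduce the corollary by composing the equivalence of Theorem~\ref{thm-eigen} with the Schneider--Teitelbaum duality recalled in Section~\ref{sect-dis}, after identifying the subcategories that match up under it.

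First I would recall that $V\mapsto V'_b$ is an anti-equivalence from $Rep(G)$ onto the category $\sC_G$ of coadmissible $D(G)$-modules (\cite{ST5}, Thm.~6.3), the $D(G)$-module structure on $V'_b$ being the one induced by the contragredient $G$-action. The point is that this anti-equivalence carries $Rep(G)_\theta$ onto $\sC_{G,\theta}=\sC_G\cap{\rm Mod}(D(G)_\theta)$. Indeed, the $U(\frg_K)$-action on $V'_b$ coming from $D(G)$ is, up to the canonical anti-automorphism $\frx\mapsto\dot{\frx}$, the transpose of the $U(\frg_K)$-action on $V$; since $Z(\frg_K)$ is commutative and $\theta$ is a character, $Z(\frg_K)$ acts on $V$ through $\theta$ if and only if $\ker\theta$ annihilates $V'_b$, i.e. if and only if $V'_b$ is a module over the central reduction $D(G)_\theta=D(G)\otimes_{Z(\frg_K),\theta}K$. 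Hence $V\mapsto V'_b$ restricts to an anti-equivalence $Rep(G)_\theta\car\sC_{G,\theta}$ with quasi-inverse $M\mapsto M'_b$. As noted in the preceding section, $\ker\theta$ is a finitely generated, hence closed, ideal of $D(G)$, so $\sC_{G,\theta}$ is indeed an abelian subcategory and $D(G)_\theta$ is again Fréchet--Stein.

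Next I would invoke Theorem~\ref{thm-eigen} (stated for $k\geq e$; for an arbitrary congruence subgroup one replaces $\Bsl$ by the variant $\cB^\lambda_K$ built over an auxiliary overgroup $\tG$, for which the analogue of Theorem~\ref{thm-eigen} holds verbatim, as observed at the end of the preceding section). Since $\lambda$ corresponds to $\theta$ under $\phi_K$ and $\lambda+\rho$ is dominant and regular, $\Gamma(X,\cdot)$ induces an equivalence $\sC_{\Bsl}\car\sC_{G,\theta}$; moreover $\Gamma(X,\Bsl)\cong D(G)_\theta$ by Cor.~\ref{cor-H2} and Prop.~\ref{prop-ident}, so that for any $\cM\in\sC_{\Bsl}$ the module $\Gamma(X,\cM)$ is genuinely coadmissible over $D(G)_\theta$ and its strong dual is an admissible locally analytic $G$-representation with infinitesimal character $\theta$. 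Composing the two functors yields the asserted (contravariant) equivalence
\[
\sC_{\Bsl}\;\xrightarrow{\ \Gamma(X,\cdot)\ }\;\sC_{G,\theta}\;\xrightarrow{\ (\cdot)'_b\ }\;Rep(G)_\theta,\qquad \cM\mapsto\Gamma(X,\cM)'_b,
\]
a quasi-inverse being $V\mapsto{\rm \tilde{L}}_\lambda(V'_b)$ with ${\rm \tilde{L}}_\lambda$ as in Theorem~\ref{thm-eigen}.

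The only step requiring genuine care is the identification of $Rep(G)_\theta$ with $\sC_{G,\theta}$ under duality; this is essentially contained in \cite{ST5}, \S3, the key input being that passing to the contragredient intertwines the $U(\frg_K)$-action with its transpose. Everything else is a formal concatenation of Theorem~\ref{thm-eigen}, Cor.~\ref{cor-H2}, and Prop.~\ref{prop-ident}.
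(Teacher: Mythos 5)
Your proposal is correct and follows essentially the same route as the paper: the corollary is obtained there, without further argument, by composing the equivalence $\sC_{\Bsl}\car\sC_{G,\theta}$ of Theorem \ref{thm-eigen} with the Schneider--Teitelbaum anti-equivalence between $\sC_{G,\theta}$ and $Rep(G)_\theta$ recalled in Sections \ref{sect-dis} and 7.1, exactly as you do. Your additional check that duality matches infinitesimal characters is the only point the paper leaves implicit, and your verification of it is fine.
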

\end{para}
\begin{para}
The completion $\hat{U}(\frg_K)$ of $U(\frg_K)$ with respect to
all submultiplicative seminorms is called the {\it Arens-Michael
envelope} of the $p$-adic Lie algebra $\frg_K$ and was studied in
the papers \cite{SchmidtBGG},\cite{SchmidtSTAB}. It is a
Fr\'echet-Stein algebra and we have the abelian category
$\sC(\hat{U}(\frg_K)_\theta)$ of coadmissible modules over the
central reduction $\hat{U}(\frg_K)_\theta$ at the infinitesimal
character $\theta$. Arguing as in \cite{SchmidtBGG}, Prop. 3.2.3
and using the formula ${\rm (*)}$ in the proof of \cite{ST5},
Prop. 3.7 we see that $\hat{U}(\frg_K)_\theta$ is canonically
isomorphic to the projective limit of the
$\widehat{\cU^\lambda_{n}}_{,K}$. On the other hand, we may form
the projective limit $\widehat{\cD^\lambda_K}$ of the $\hDmlK$.
According to Cor. \ref{cor-conditions1} we have the chain of full
abelian subcategories
$${\rm coh}(\widehat{\cD^\lambda_K})\subset\sC_{\widehat{\cD^\lambda_K}}\subset {\rm Mod}(\widehat{\cD^\lambda_K}).$$
The commutative diagram at the end of section $4$ induces a map
$\hat{U}(\frg_K)_\theta\ra\widehat{\cD^\lambda_K}$. According to
Thm. \ref{thm-BB}, it is an isomorphism on global sections in case
(H1)-(H3) hold. Completely similar to the above discussion one may
prove in this case that, if $\lambda+\rho$ is dominant and
regular, then the functor
$M\mapsto\varprojlim_n~(\widehat{\cU^\lambda_{n}}_{,K}\otimes_{\hat{U}(\frg_K)_\theta}
M)$ induces an equivalence of categories
$$ \sC(\hat{U}(\frg_K)_\theta)\car
\sC_{\widehat{\cD^\lambda_K}}$$  with quasi-inverse given by
$\Gamma(X,.)$. This equivalence is compatible with Thm.
\ref{thm-eigen} in the obvious way.

\end{para}

\section{Dimension computations}

As an application of the above methods we prove in this final
section a locally analytic version of Smith's theorem
(\cite{Smith}). First, we have to recall some notation and
properties concerning the canonical dimension. The canonical
dimension (or rather the codimension) in the context of locally
analytic representations appears first in \cite{ST5},\S8.

\begin{para}
We recall (cf. \cite{LVO}, chap. III.) the notion of an Auslander
regular ring. Let $R$ be an arbitrary associative unital ring. For
any (left or right) $R$-module $N$ the {\it grade} $j_R(N)$ is
defined to be either the smallest integer $l$ such that
Ext$_R^l(N,R)\neq 0$ or $\infty$. A left and right noetherian ring
$R$ is called left and right {\it Auslander regular} if its left
and right global dimension is finite and if every finitely
generated left or right $R$-module $N$ satisfies {\it Auslander's
condition}: for any $l\geq 0$ and any $R$-submodule
$L\subseteq$\,Ext$_R^l(N,R)$ one has $j_R(L)\geq l$.

\vskip8pt

In the following the term module always means {\it left} module.
Noetherian rings are two-sided noetherian and other ring-theoretic
properties such as Auslander regular are used similarly.

\vskip8pt

Let $R$ be an Auslander regular ring and $M$ a finitely generated
$R$-module. The number
\[ d_R(M):=gld(R)-j_R(M)\]
is called the {\it canonical dimension} of $M$. The map $M \mapsto
d_R(M)$ is a finitely partitive exact dimension function in the
sense of \cite{MCR}, \S6.8.4 and \S8.3.17. For this and more
details on the canonical dimension for finitely generated modules
over an Auslander regular ring (or more generally, an
Auslander-Gorenstein ring) we refer to \cite{Levasseur}.

\vskip8pt

A commutative noetherian ring is Auslander regular if and only if
it is regular (\cite{LVO}, III.2.4.3). Let $R$ be an associative
ring endowed with a separated exhaustive $\bbZ$-filtration by
additive subgroups such that $R$ is complete in the filtration
topology. If the associated graded ring of $R$ is Auslander
regular of (left and right) global dimension $d$ then $R$ is
Auslander regular and has (left and right) global dimension $\leq
d$ ([loc.cit.], II.2.2.1, II.3.1.4, III.2.2.5).

\end{para}

\begin{para}

Let now $G$ be a locally $L$-analytic group which is $L$-uniform.
Recall that the rings underlying the $K$-Banach algebras
$D_{\rn}(G^{p^m})$ and $D_{r_m}(G)$ are noetherian (\cite{ST5},
Thm. 4.5).
\begin{prop}\label{prop-ARI}
Let $m\geq 1$ and let $A$ denote one of the rings
$D_{\rn}(G^{p^m})$ or $D_{r_m}(G)$. Then $A$ is Auslander regular
of (left and right) global dimension equal to $d={\rm dim}_L G$. A
finitely generated $A$-module $M$ is finite dimensional over $K$
if and only if $d_{A}(M)=0$.
\end{prop}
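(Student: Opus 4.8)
The plan is to deduce both assertions from the filtration-to-graded-ring mechanism recalled in the preceding paragraph together with the explicit description of the graded ring of $D_{r_0}(G^{p^m})$ in Lemma~\ref{lem-graded}. First I would treat $A = D_{r_0}(G^{p^m})$. We established in the discussion preceding (\ref{equ-Gr}) that this ring is a \emph{complete doubly filtered $K$-algebra} with $Gr(D_{r_0}(G^{p^m})) \simeq k[u_1,\dots,u_d]$, a commutative polynomial ring in $d$ variables over the (perfect) residue field $k$. That ring is regular of Krull dimension $d$, hence Auslander regular of global dimension $d$. The two cited facts from \cite{LVO}, III.2.4.3 and II.2.2.1 / II.3.1.4 / III.2.2.5, applied to the two-step filtration (first the $\pi_K$-adic filtration on the lattice $F_{r_0}^0 D_{r_0}(G^{p^m})$, then the degree filtration on its reduction, exactly as in the double-filtration formalism of \cite{AW}, Def.~3.1) then give that $D_{r_0}(G^{p^m})$ is Auslander regular with global dimension $\leq d$. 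For the reverse inequality I would either exhibit the trivial module $K$ (with its augmentation) as having projective dimension exactly $d$, or invoke that the associated graded is a polynomial ring in exactly $d$ variables and that passing to the graded ring cannot increase global dimension while, for a filtered module realizing the top grade, equality is forced; alternatively one may simply cite \cite{ST5}, Thm.~4.11 / \cite{SchmidtAUS} where this global dimension is computed to be $d$. The case $A = D_{r_m}(G)$ then follows because, by Cor.~\ref{cor-crossed}, $D_{r_m}(G) = D_{r_0}(G^{p^m}) \ast (G/G^{p^m})$ is a crossed product of the Auslander regular ring $D_{r_0}(G^{p^m})$ by the finite group $H_m = G/G^{p^m}$, whose order is a power of $p$ and hence invertible in $K$; crossed products of Auslander regular rings by finite groups of order invertible in the ground field are again Auslander regular of the same global dimension (this is standard, e.g.\ via \cite{MCR}; the ring is finite free over the Auslander regular subring so $\mathrm{gld}$ is unchanged).

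For the dimension statement, recall $d_A(M) = \mathrm{gld}(A) - j_A(M) = d - j_A(M)$, and that $M \mapsto d_A(M)$ is a finitely partitive, exact dimension function (\cite{MCR}, 6.8.4, 8.3.17). First suppose $M$ is finite-dimensional over $K$. Then $M$ is a module over the finite-dimensional quotient $A/\mathrm{Ann}(M)$; more directly, any $K$-linear surjection forces $M$ to have a composition series whose factors are finite-dimensional simple $A$-modules, so it suffices to show $d_A(S) = 0$ for $S$ a finite-dimensional simple module, and by exactness of $d_A$ the general finite-dimensional case follows. For such an $S$ one shows $j_A(S) = d$, i.e.\ $\mathrm{Ext}_A^i(S,A) = 0$ for $i < d$. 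Here I would pass to the associated graded: $\mathrm{gr}\,S$ (for a good filtration) is a finitely generated graded module over the polynomial ring $Gr(A)$ which is \emph{finite-dimensional over $k$}, hence supported at the irrelevant maximal ideal, so its grade over $Gr(A)$ is $d$; since grade cannot decrease under passage to $\mathrm{gr}$ (\cite{LVO}, III.2.5.3 or the spectral sequence argument there) we get $j_A(S) \geq d$, and $j_A(S) \leq \mathrm{gld}(A) = d$ always, so $j_A(S) = d$ and $d_A(S) = 0$.

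Conversely, suppose $d_A(M) = 0$, equivalently $j_A(M) = d$. I would argue on the graded side again. Choose a good filtration on $M$; then $\mathrm{gr}\,M$ is a finitely generated $Gr(A) \simeq k[u_1,\dots,u_d]$-module, and the Gelfand--Kirillov / Krull dimension of $\mathrm{gr}\,M$ over the polynomial ring equals $d - j_{Gr(A)}(\mathrm{gr}\,M)$. Using that $j_A(M) \le j_{Gr(A)}(\mathrm{gr}\, M)$ cannot happen strictly at the top in this doubly-filtered setting — more precisely, in the complete doubly filtered situation of \cite{AW}, Prop.~3.4 (or the standard comparison in \cite{LVO}) one has $d_A(M)$ equal to the Krull dimension of $\mathrm{gr}\,M$ over the commutative graded ring — the hypothesis $d_A(M)=0$ forces $\mathrm{gr}\,M$ to have Krull dimension $0$ over $k[u_1,\dots,u_d]$, hence $\mathrm{gr}\,M$ is finite-dimensional over $k$, hence $M$ is finite-dimensional over $K$. \textbf{The main obstacle} I anticipate is precisely this last comparison: making rigorous that the canonical (Auslander) dimension $d_A(M)$ coincides with the commutative Krull dimension of the graded module $\mathrm{gr}\,M$ over the regular graded ring $Gr(A)$, across the \emph{two-step} ($\pi_K$-adic then degree) filtration. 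This is exactly what the double-filtration machinery of \cite{AW}, \S3 is designed to handle, so I expect the cleanest route is to cite the relevant statement there (the analogue of \cite{AW}, Lemma~3.2 / Prop.~3.4 giving $d_A(M) = \dim_{Gr(A)}(Gr\,M)$) rather than re-proving the spectral-sequence comparison from scratch; once that identification is in hand, both directions of the finite-dimensionality criterion are immediate from commutative algebra over a polynomial ring.
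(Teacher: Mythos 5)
Your proposal is correct and follows essentially the same route as the paper: reduce to $A=D_{r_0}(G^{p^m})$ via the finite free (crossed product) extension of Lemma \ref{lem-free}/Cor. \ref{cor-crossed}, exploit the complete doubly filtered structure with $Gr(A)\simeq k[u_1,\dots,u_d]$ from (\ref{equ-Gr}), and use the augmentation $D(G^{p^m})\to K$ to force the global dimension up to $d$ and to settle the finite-dimensionality criterion by comparison with the commutative graded side. The only difference is packaging: the paper delegates your ``main obstacle'' (that $d_A(M)$ is computed by $Gr\,M$, giving $\mathrm{gld}=d$ and the criterion $d_A(M)=0\Leftrightarrow \dim_K M<\infty$) to \cite{AW}, Prop.~9.1, and the transfer along the finite free extension to \cite{ST5}, Lem.~8.8 and \cite{SchmidtAUS}, Cor.~7.3, rather than to the crossed-product/Maschke argument and the AW \S3 statements you sketch.
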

\begin{proof}
By Lem. \ref{lem-free} the ring $D_{r_m}(G)$ is a finite free
extension of $D_{\rn}(G^{p^m})$ on a basis a system of
representatives for the group $G/G^{p^m}$. By \cite{ST5}, Lem. 8.8
and \cite{SchmidtAUS}, Cor. 7.3 it therefore suffices to prove the
proposition in the case $A=D_{\rn}(G^{p^m})$. By Lem.
\ref{lem-graded} and the discussion above the ring $A$ is seen to
be Auslander regular of global dimension $\leq d$ and, at the same
time, a complete doubly filtered $K$-algebra with
$Gr(A)=k[u_1,...,u_d]$. Furthermore, functoriality of $D(\cdot)$
gives a $K$-algebra homomorphism
\[ D(G^{p^m})\rightarrow D(\{1\})=K\]
which extends to $A$ and hence, $A$ admits a nonzero module of
finite $K$-dimension. Now \cite{AW}, Prop. 9.1 completes the
proof.
\end{proof}
For future reference we repeat the following ingredient of the
preceding proof.
\begin{cor}\label{cor-grade}
If $M$ denotes a finitely generated $D_{r_m}(G)$-module $M$ then
\[d_{D_{r_m}(G)}(M)=d_{D_{\rn}(G^{p^m})}(M).\]
\end{cor}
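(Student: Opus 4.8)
The plan is to reduce the statement to an equality of grades. By Proposition \ref{prop-ARI} both rings $D_{r_m}(G)$ and $D_{\rn}(\Gpm)$ are Auslander regular of global dimension $d=\dim_L G$, and by definition $d_R(M)={\rm gld}(R)-j_R(M)$; so it suffices to prove $j_{D_{r_m}(G)}(M)=j_{D_{\rn}(\Gpm)}(M)$, the right-hand grade being computed after restriction of $M$ to $D_{\rn}(\Gpm)$. This restriction is finitely generated because, by Lemma \ref{lem-free}, $D_{r_m}(G)$ is finite over $D_{\rn}(\Gpm)$, so both grades are defined.

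Write $R:=D_{\rn}(\Gpm)$ and $S:=D_{r_m}(G)$. By Lemma \ref{lem-free} and Corollary \ref{cor-crossed}, $S$ is a free crossed product $S=R\ast(G/\Gpm)$, hence in particular $S$ is finitely generated and free both as a left and as a right $R$-module, on the basis $\overline{H}=\{\overline{h}\}$ of coset representatives. First I would check that this makes $R\hookrightarrow S$ a Frobenius extension: the $R$-bimodule projection $\pi:S\to R$ onto the $\overline{1}$-component, together with the assignment $s\mapsto(x\mapsto\pi(xs))$, defines a homomorphism of left $S$-modules $S\to\Hom_R({}_{R}S,R)$ (the target carrying the left $S$-action $(s\cdot f)(x):=f(xs)$), and this homomorphism is bijective since, in the basis $\overline{H}$ and its $R$-linear dual, its matrix has unit entries — indeed $\pi(\overline{h'}\,\overline{h})$ equals the twisting value $\tau(h',h)\in R^\times$ when $h'h=1$ and $0$ otherwise. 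Second, given a resolution $P_\bullet\to M$ by finitely generated free left $S$-modules, restriction to $R$ yields a resolution $P_\bullet|_{R}\to M|_{R}$ by finitely generated free left $R$-modules; by the restriction--coinduction adjunction $\Hom_R(P_\bullet|_{R},R)\cong\Hom_S(P_\bullet,\Hom_R({}_{R}S,R))\cong\Hom_S(P_\bullet,S)$, and passing to cohomology gives $\Ext^n_R(M|_{R},R)\cong\Ext^n_S(M,S)$ for all $n\geq 0$. Hence $j_R(M)=j_S(M)$, which proves the corollary.

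This is essentially the computation already carried out inside the proof of Proposition \ref{prop-ARI} (compare also \cite{ST5}, Lem. 8.8 and \cite{SchmidtAUS}, Cor. 7.3), so no genuine obstacle remains; the one point deserving care is the verification that a free crossed product by a finite group is a Frobenius extension, i.e. the left $S$-module isomorphism $\Hom_R({}_{R}S,R)\cong S$, which is where the explicit basis $\overline{H}$ and the twisting $\tau$ from \ref{para-TAU} intervene.
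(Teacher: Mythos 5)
Your proof is correct. The paper itself settles this corollary essentially by citation: the corollary just records an ingredient of the proof of Proposition \ref{prop-ARI}, where, after noting via Lemma \ref{lem-free} that $D_{r_m}(G)$ is a finite free left and right module over $D_{r_0}(G^{p^m})$ on a basis of coset representatives, the transfer of grade and canonical dimension is taken from \cite{ST5}, Lem.~8.8 and \cite{SchmidtAUS}, Cor.~7.3. You instead prove the grade comparison from scratch: the crossed-product structure $D_{r_m}(G)=D_{r_0}(G^{p^m})\ast_{\sigma,\tau}H_m$ of Cor.~\ref{cor-crossed} makes the extension Frobenius --- your computation that $\pi(\overline{h'}\,\overline{h})$ equals the unit $\tau(h',h)$ if $h'h=1$ and $0$ otherwise is exactly right, since for $h'h\neq 1$ the element $\overline{h'h}\,\tau(h',h)$ has zero $\overline{1}$-component in the left $R$-basis, so the matrix of $S\to\Hom_R({}_RS,R)$ is monomial with unit entries --- and then the restriction--coinduction adjunction applied to a finite free resolution gives $\Ext^n_R(M,R)\cong\Ext^n_S(M,S)$ for all $n$, hence $j_R(M)=j_S(M)$. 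Combined with the equality of the two global dimensions from Proposition \ref{prop-ARI} (whose proof relies on the external citations, not on the corollary as a stated result, so your use of it is not circular), this yields the claim. What your route buys is a self-contained argument in place of the two citations, at the modest cost of verifying the (standard) Frobenius property of a crossed product by a finite group, which you do correctly.
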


\end{para}

\begin{para}\label{valuesofr}

As before we now let $\bG$ be a connected split reductive group
scheme over $o_L$ with Lie algebra $\frg$. We assume that
(H1)-(H3) from (4.10) hold. Let $\bG'_{\bbC}$ denote the complex
derived algebraic group of $\bG$ and let $\frg'_{\bbC}$ be its Lie
algebra. It is known that there is a unique non-zero coadjoint
$\bG'_{\bbC}$-orbit in $(\frg'_{\bbC})^*$ of minimal dimension
(\cite{CollingwoodMcGovern}, Rem. 4.3.4) and its dimension is an
even integer $\geq 2$. Let $r$ denote half this dimension.
According to work of A. Joseph (\cite{Joseph}) the values of $r$
are well-known. The following table is taken from loc.cit. with
the two corrections noted in \cite{JosephMINORBIT}.
\[\begin{tabular}{c|ccccccccc}
$\Phi$ &$A_l$& $B_l$ & $C_l$ & $D_l$ & $E_6$ & $E_7$ & $E_8$ & $F_4$ & $G_2$\\
\hline

${\rm dim~} \frg'_{\bbC}$ & $l^2+2l$ &$2l^2+l$ &$2l^2+l$ &$2l^2-l $ &$78$&$133$&$248$&$52$&$14$\\
$r$ & $l$ & $2l-2 $ & $l$ & $2l-3$ & $11$ & $17$ & $29$ & $8$ &
$3.$
\\

\end{tabular}\]

\end{para}

\begin{para}

Let $\tilde{G}$ be from now on a locally $L$-analytic group whose
$L$-Lie algebra $Lie(\tG)$ is isomorphic to
$\frg_L:=L\otimes_{o_L}\frg$. Let us identify these Lie algebras
via such an isomorphism. Let $d={\rm dim}_L Lie(\tG)$. Choose
$k\geq 1$ sufficiently large such that $\exp_{\tilde{G}}$ is
defined on $\Lambda:=\pi_L^{k}\frg$ and such that
\[G:=\exp_{\tilde{G}}(\Lambda)\]
is an open subgroup of $\tilde{G}$ which is $L$-uniform. Then
\[L_{G}=\Lambda=\pi_L^{k}\frg_0\]
as Lie algebras over $\bbZ_p$ where $\frg_0$ equals $\frg$ but
viewed over $\bbZ_p$. In other words, the condition {\rm (HYP)}
from \ref{para-HYP} is satisfied. Let $m\geq 1$. According to
Prop. \ref{prop-enddiscussion} we have a canonical algebra
isomorphism
$$\cL_{G^{p^m}}: \widehat{U(\frg)_n}_{,K}\car
D_{r_0}(G^{p^m},K)$$ where $n:= (m-1)e+k \geq 1.$ At this point we
recall a deep theorem of \cite{AW}.
\begin{thm} {\rm (Ardakov/Wadsley)} \label{thm-AW}
Let $A:=\widehat{U(\frg)_n}_{,K}$. If $M$ is a finitely generated
(left) $A$-module with $d_A(M)\geq 1$, then $d_A(M)\geq r$.
\end{thm}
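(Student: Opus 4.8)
The plan is to adapt S.\,P. Smith's characteristic-variety argument (\cite{Smith}) to the algebra $A:=\widehat{U(\frg)_n}_{,K}$, the decisive point being to bypass the failure of Gabber's involutivity theorem in characteristic $p$ by a comparison with characteristic zero. First I would identify $d_A$ with the dimension of a characteristic variety. The algebra $A$ is a complete doubly filtered $K$-algebra (\cite{AW}, Def.~3.1), and its graded ring ${\rm Gr}(A)$ is a symmetric algebra $S(\frg_k)$ over the residue field $k$ of $K$, hence a polynomial ring and in particular regular. As in the proof of \ref{prop-ARI} and in \cite{AW}, Prop.~9.1, it follows that $A$ is Auslander regular of global dimension ${\rm dim}\,\frg$ and that, for a finitely generated $A$-module $M$ carried with a good double filtration, $d_A(M)={\rm dim}\,{\rm Ch}(M)$, where the characteristic variety ${\rm Ch}(M)\subseteq\frg^*_k$ is the (closed, conic) support of the finitely generated ${\rm Gr}(A)$-module ${\rm Gr}(M)$. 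Thus it is enough to show: if ${\rm dim}\,{\rm Ch}(M)\ge 1$, then ${\rm dim}\,{\rm Ch}(M)\ge r$.

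The heart of the argument is the passage to characteristic zero. Choose an $o_K$-lattice $M^\circ\subseteq M$ that is finitely generated over $\widehat{U(\frg)_n}\otimes_{o_L}o_K$ and equip it with a good $o_K$-flat double filtration, so that ${\rm gr}\,M^\circ$ is finitely generated and $o_K$-flat over $S(\frg\otimes_{o_L}o_K)$ and ${\rm Ch}(M)$ is its special fibre. Then the generic fibre $N:={\rm gr}\,M^\circ\otimes_{o_K}K$ is the associated graded, for the order filtration, of a good-filtered module over $\widehat{U(\frg)_n}\otimes_{o_L}K$, the $p$-adic completion of the characteristic-zero enveloping algebra; the order-filtration graded of this algebra is a completed symmetric algebra carrying the Kirillov--Kostant Poisson bracket, so Gabber's involutivity theorem --- whose proof via Poisson brackets of symbols goes through unchanged in this completed filtered setting --- shows that ${\rm Supp}(N)\subseteq\frg^*_K$ is coisotropic and conic. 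Flatness over $o_K$ yields ${\rm dim}\,{\rm Ch}(M)\ge{\rm dim}\,{\rm Supp}(N)$, and ${\rm dim}\,{\rm Supp}(N)\ge 1$ as soon as ${\rm dim}\,{\rm Ch}(M)\ge 1$, for otherwise ${\rm gr}\,M^\circ$ would be a finitely generated $o_K$-module and ${\rm Ch}(M)$ would reduce to $\{0\}$.

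It remains to invoke the classical geometry over $\bbC$. After base change, ${\rm Supp}(N)$ is a non-zero conic coisotropic subvariety of $\frg^*_\bbC$; it therefore meets some non-zero coadjoint orbit $\cO$, and every non-zero orbit of $\bG'_\bbC$ has dimension $\ge{\rm dim}\,\cO_{\min}=2r$ (\ref{valuesofr}), while coisotropy forces ${\rm dim}({\rm Supp}(N)\cap\cO)\ge\tfrac12{\rm dim}\,\cO$ at a generic point; hence ${\rm dim}\,{\rm Supp}(N)\ge r$ and so $d_A(M)={\rm dim}\,{\rm Ch}(M)\ge r$. The main obstacle is precisely the characteristic-$p$ breakdown of involutivity, which forces the detour through a characteristic-zero model; the remaining technical work --- constructing a good $o_K$-flat double filtration within the framework of \cite{AW}, and checking the completed form of Gabber's theorem --- is routine in spirit but delicate in execution. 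An alternative, presumably closer to \cite{AW}, would instead localise $M$ onto the flag scheme $X$ via Theorems \ref{thm-BB}--\ref{thm-globalsections} and argue on $T^*X$ through the Springer resolution $T^*X\to\cN$ of the special fibre, where the hypotheses (H1)--(H3) would be used to ensure that $\cN$ has the orbit dimensions of the classical nilpotent cone.
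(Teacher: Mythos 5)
Your strategy is genuinely different from the paper's, and it has a gap at its decisive step: the characteristic-zero model you want to lift to does not exist in this setting. In the Ardakov--Wadsley formalism on which the dimension theory here rests (cf.\ Prop.\ \ref{prop-ARI} and \cite{AW}, \S 3, \S 9), $A=\widehat{U(\frg)_n}_{,K}$ is a \emph{complete doubly filtered} $K$-algebra: the first filtration is the $\pi_K$-adic (norm) filtration, and the second filtration lives only on the reduction $F^0A/\pi_K F^0A$; hence ${\rm Gr}(A)\simeq S(\frg\otimes_{o_L}k)$ and the characteristic variety of a finitely generated $A$-module is intrinsically a conic subvariety of $(\frg\otimes_{o_L}k)^*$ in characteristic $p$. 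The $p$-adically completed algebra carries no exhaustive order filtration (its elements are infinite convergent sums of PBW monomials), so there is no ``good $o_K$-flat double filtration'' producing a finitely generated $S(\frg\otimes_{o_L}o_K)$-module that interpolates between a characteristic-zero symbol module and ${\rm Ch}(M)$: the two filtrations cannot be interchanged, because the first graded step is precisely reduction mod $\pi_K$. Consequently your claim that Gabber's involutivity theorem ``goes through unchanged in this completed filtered setting'' is not a routine verification but exactly the missing ingredient; the failure of involutivity in this completed/characteristic-$p$ situation is the reason the statement is deep, and Ardakov--Wadsley prove their Theorem 9.10 not by lifting to characteristic zero but by localizing via their completed Beilinson--Bernstein theorem and exploiting the characteristic-$p$ nilpotent-orbit geometry made available by (H1)--(H3) --- the route your final sentence gestures at but does not carry out.

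The paper's own proof is, by contrast, a short reduction: one checks that \cite{AW}, 9.3--9.6 extend from semisimple to reductive $\bG$, passes to a finite extension of $K$ so that $M$ may be assumed to be a module over a central reduction $\widehat{\cU^\lambda_n}_{,K}$ with $\lambda+\rho$ dominant, splits $\frg=\fra\times\frg'$ with $\fra$ abelian, and then invokes the proof of \cite{AW}, Thm.\ 9.10 for the semisimple part. Your sketch omits this reduction to an infinitesimal character entirely, and the omission is not cosmetic: without it the closing orbit-theoretic step is invalid for reductive (non-semisimple) $\frg$, since the support of ${\rm gr}$ may lie entirely in $\fra^*\subset\frg^*$, where every $\bG'_{\bbC}$-orbit is a point and coisotropy imposes no lower bound on the dimension (think of modules on which $\frg'$ acts trivially). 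So even granting a characteristic-zero model, your argument as written would only cover semisimple $\frg$ or modules already admitting a central character; supplying that reduction is precisely the content of the finite-extension/central-reduction step in the paper and in \cite{AW}, 9.3--9.6.
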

\begin{proof}
It is straightforward to check that the results of loc.cit.,
9.3/4/5/6 extend from the semisimple case considered there to the
case of $\bG$. We may therefore assume, by passing to a finite
field extension of $K$, that $M$ is a (finitely generated)
$\widehat{\cU^\lambda_n}_{,K}$-module for a weight
$\lambda\in\frt^*_{K}$ such that $\lambda+\rho$ is dominant. There
is a canonical decomposition as Lie algebras $\frg=\fra \times
\frg'$ with abelian $\fra$ and a semisimple algebra $\frg'\neq 0$.
The (completed deformed) central reduction
$\widehat{\cU^\lambda_n}$ depends only on $\frg'$. The claim
follows now as in the proof of loc.cit., Thm. 9.10.
\end{proof}
Remark: Thm. 9.10 of loc.cit. is the analogue of Smith's theorem
for $p$-adically completed universal enveloping algebras. It
allows to prove a similar analogue for certain Iwasawa algebras
culminating in the main result of loc.cit.

\vskip8pt

By Cor. \ref{cor-grade} we have
\begin{cor}\label{cor-main}
If $M$ is a finitely generated (left) $D_{r_m}(G)$-module with
$d_{D_{r_m}(G)}(M)\geq 1$ then $d_{D_{r_m}(G)}(M)\geq r$.
\end{cor}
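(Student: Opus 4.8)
The plan is to transport $M$ through the chain of ring isomorphisms relating $D_{r_m}(G)$, $D_{\rn}(G^{p^m})$ and the completed deformed enveloping algebra $\widehat{U(\frg)_n}_{,K}$, and then to invoke Theorem \ref{thm-AW}. First I would restrict scalars along the inclusion $D_{\rn}(G^{p^m})\subset D_{r_m}(G)$: by Lemma \ref{lem-free} this is a finite free ring extension, so $M$, regarded as a module over $D_{\rn}(G^{p^m})$, is still finitely generated. By Corollary \ref{cor-grade} the canonical dimension is unaffected, i.e. $d_{D_{\rn}(G^{p^m})}(M)=d_{D_{r_m}(G)}(M)\geq 1$.

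Next I would apply the Lazard isomorphism. Since $n=(m-1)e+k\geq 1$, Proposition \ref{prop-enddiscussion} provides an algebra isomorphism $\cL_{G^{p^m}}\colon \widehat{U(\frg)_n}_{,K}\car D_{\rn}(G^{p^m})$. The grade and the global dimension, hence the canonical dimension, are preserved by any ring isomorphism, so $M$ becomes a finitely generated module over $A:=\widehat{U(\frg)_n}_{,K}$ with $d_A(M)=d_{D_{\rn}(G^{p^m})}(M)\geq 1$. Theorem \ref{thm-AW} then yields $d_A(M)\geq r$, and reading the two identifications backwards gives $d_{D_{r_m}(G)}(M)\geq r$, which is the assertion.

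Everything here is immediate once the dictionary is set up; the only points deserving a sentence are the invariance of the canonical dimension under the finite free extension $D_{\rn}(G^{p^m})\subset D_{r_m}(G)$, which is precisely Corollary \ref{cor-grade} (itself resting on \cite{ST5}, Lem. 8.8 and \cite{SchmidtAUS}, Cor. 7.3 via Proposition \ref{prop-ARI}), and its obvious invariance under the algebra isomorphism $\cL_{G^{p^m}}$. The substantive input --- the ``gap'' between dimension $0$ and dimension $r$ for finitely generated $\widehat{U(\frg)_n}_{,K}$-modules --- is supplied entirely by Theorem \ref{thm-AW}, which in turn reduces to the semisimple case of \cite{AW}; so beyond this bookkeeping there is no real obstacle left to address.
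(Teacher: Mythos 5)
Your argument is exactly the paper's: the paper deduces the corollary from Theorem \ref{thm-AW} by combining Corollary \ref{cor-grade} (equality of canonical dimensions along the finite free extension $D_{\rn}(G^{p^m})\subset D_{r_m}(G)$) with the Lazard isomorphism $\cL_{G^{p^m}}\colon \widehat{U(\frg)_n}_{,K}\car D_{\rn}(G^{p^m})$ of Proposition \ref{prop-enddiscussion}, which is precisely your dictionary. The proposal is correct and adds only the (harmless, accurate) bookkeeping remarks about finite generation and invariance under isomorphism.
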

As we have pointed out in section \ref{sect-dis} the projective
system of noetherian $K$-Banach algebras $D_{r_m}(G)$ for $m\geq
1$ defines the Fr\'echet-Stein structure of $D(G)$. By Prop.
\ref{prop-ARI} the rings $D_{r_m}(G)$ are Auslander regular with
global dimension equal to $d$. This implies (\cite{ST5}, \S8) that
the category of coadmissible modules $\sC_{\tG}$ is equipped with
a well-behaved {\it canonical codimension function} defined as
${\rm codim}(M):=j_{D(G)}(M)$ for a coadmissible module $M$. This
number does not depend on the choice of $G$. Let ${\rm
dim}(M):=d-{\rm codim}~{M}$. Hence $0\leq {\rm dim}(M)\leq d$ if
$M\neq 0$ and ${\rm dim}(0)=-\infty$.

\vskip8pt

\begin{thm}\label{thm-smith}
Let $M\in\sC_{\tG}$ with ${\rm dim}(M)\geq 1$. Then ${\rm
dim}(M)\geq r$.
\end{thm}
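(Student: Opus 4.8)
The plan is to reduce the statement about a general coadmissible $D(\tG)$-module to the Banach-algebra statement of Corollary~\ref{cor-main} by choosing a convenient $L$-uniform subgroup $G$ of $\tG$ and exploiting the Fréchet--Stein structure. First, by the discussion at the beginning of \ref{valuesofr} we may choose $k\geq 1$ large enough that $G:=\exp_{\tG}(\pi_L^k\frg)$ is an open $L$-uniform subgroup of $\tG$ with $L_G=\pi_L^k\frg_0$; since the codimension function on $\sC_{\tG}$ is independent of the choice of such $G$ (\cite{ST5}, \S8), it suffices to compute $j_{D(G)}(M')$ where $M'$ denotes the restriction of $M$ to a coadmissible $D(G)$-module (restriction to an open subgroup preserves coadmissibility and the dimension, again by \cite{ST5}, \S8, or \cite{SchmidtAUS}). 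So from now on we may assume $\tG=G$ is $L$-uniform with $L_G=\pi_L^k\frg_0$.

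Next I would translate the grade of the coadmissible module into grades over the individual Banach algebras $D_{r_m}(G)$. By the theory of Fréchet--Stein algebras, for a nonzero coadmissible module $M$ with coherent sheaf $(M_m)_m$ one has
\[
j_{D(G)}(M)=\min_m j_{D_{r_m}(G)}(M_m),
\]
and equivalently ${\rm dim}(M)=\max_m\bigl(d-j_{D_{r_m}(G)}(M_m)\bigr)=\max_m d_{D_{r_m}(G)}(M_m)$; this is exactly the content of the dimension theory set up in \cite{ST5}, \S8, using that each $D_{r_m}(G)$ is Auslander regular of global dimension $d$ (Prop.~\ref{prop-ARI}) and that the transition maps are flat. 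Now if ${\rm dim}(M)\geq 1$, then there is some index $m\geq 1$ with $d_{D_{r_m}(G)}(M_m)\geq 1$. For that $m$, $M_m$ is a finitely generated $D_{r_m}(G)$-module of canonical dimension $\geq 1$, so Corollary~\ref{cor-main} gives $d_{D_{r_m}(G)}(M_m)\geq r$. Hence ${\rm dim}(M)=\max_m d_{D_{r_m}(G)}(M_m)\geq r$, which is the assertion.

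The only real subtlety — and the step I would be most careful about — is the identity ${\rm dim}(M)=\max_m d_{D_{r_m}(G)}(M_m)$, i.e.\ that the canonical dimension of a coadmissible module is the supremum of the canonical dimensions of its Banach-algebra pieces, together with the fact that this quantity is $\geq 1$ as soon as $M$ is not zero-dimensional. This is where one uses that $j_{D(G)}$ behaves well under the flat base changes $D(G)\to D_{r_m}(G)$: grade can only drop under such a flat extension with the relevant finiteness, and by the Fréchet--Stein formalism of \cite{ST5} it is computed as the infimum over $m$. One also needs that $M_m\neq 0$ for all large $m$ (true since $M\neq 0$ and $M\car\varprojlim_m M_m$), so that "$d_{D_{r_m}(G)}(M_m)\geq 1$ for some $m$" really is forced by ${\rm dim}(M)\geq 1$. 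Everything else is a direct appeal to Corollary~\ref{cor-main}, which in turn rests on Theorem~\ref{thm-AW} of Ardakov--Wadsley transported through the Lazard isomorphism $\cL_{G^{p^m}}$ and Corollary~\ref{cor-grade}.
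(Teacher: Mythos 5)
Your proposal is correct and follows essentially the same route as the paper: the identity ${\rm dim}(M)=\sup_{m}d_{D_{r_m}(G)}(M_m)$ is exactly \cite{ST5}, Lem.\ 8.4 (with the reduction to the $L$-uniform subgroup $G$ already built into the definition of the dimension function in \ref{valuesofr}), and the conclusion is then a direct application of Cor.\ \ref{cor-main} to a piece $M_{m}$ of canonical dimension $\geq 1$.
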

\begin{proof}
According to \cite{ST5}, Lem. 8.4
we have ${\rm dim} (M):=\sup_{m\geq 1} d_{D_{r_m}(G)}(M_m)$ and
hence ${\rm dim} (M)=d_{D_{r_{m'}}(G)}(M_{m'})\geq 1$ for a
particular $m'\geq 1$. Now apply Cor. \ref{cor-main} to $M_{m'}.$
\end{proof}

\begin{prop}
Let $M\in\sC_{\tG}$. Then $M$ is zero-dimensional if and only if
the coherent sheaf of $M$ consists of finite-dimensional
$K$-vector spaces.
\end{prop}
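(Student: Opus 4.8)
The plan is to reduce the assertion to Proposition \ref{prop-ARI} via the description of the canonical dimension of a coadmissible module through its coherent sheaf. First I would fix, exactly as in the paragraph preceding \ref{valuesofr}, a natural number $k\gg 0$ and the open $L$-uniform subgroup $G=\exp_{\tG}(\pi_L^k\frg)$ of $\tG$, so that $L_G=\pi_L^k\frg_0$. Since restriction of coadmissible modules to the open subgroup $G$ is exact and faithful and does not change the canonical dimension (this independence is built into the formalism of section \ref{sect-dis} and \cite{ST5}, \S8), it is harmless to assume $\tG=G$. Then the coherent sheaf of $M$ in the sense of \cite{ST5} is the projective system $(M_m)_{m\geq 1}$ with $M_m:=D_{r_m}(G)\otimes_{D(G)}M$, each $M_m$ a finitely generated $D_{r_m}(G)$-module.

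Next I would recall the identity ${\rm dim}(M)=\sup_{m\geq 1}d_{D_{r_m}(G)}(M_m)$ already used in the proof of Theorem \ref{thm-smith}; it follows from \cite{ST5}, Lem. 8.4 together with the fact, supplied by Proposition \ref{prop-ARI}, that $D_{r_m}(G)$ is Auslander regular of global dimension $d={\rm dim}_L G$. The second assertion of that proposition states that a nonzero finitely generated $D_{r_m}(G)$-module is finite-dimensional over $K$ if and only if its canonical dimension is $0$; note also that $d_{D_{r_m}(G)}(M_m)\geq 0$ whenever $M_m\neq 0$, while a zero member is trivially finite-dimensional and contributes $-\infty$ to the supremum. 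With these two inputs the equivalence is purely formal. If every $M_m$ is finite-dimensional over $K$, then $d_{D_{r_m}(G)}(M_m)\leq 0$ for all $m$, so ${\rm dim}(M)\leq 0$ and $M$ is zero-dimensional. Conversely, if $M$ is zero-dimensional, i.e.\ ${\rm dim}(M)\leq 0$, then $d_{D_{r_m}(G)}(M_m)\leq 0$ for every $m$; being non-negative once $M_m\neq 0$, this number is $0$, and Proposition \ref{prop-ARI} shows that $M_m$ is finite-dimensional over $K$.

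I do not expect a genuine obstacle here: the entire substance is already contained in Proposition \ref{prop-ARI} (which rests on \cite{AW}, Prop. 9.1) and in the Fr\'echet-Stein description of the canonical dimension. The only points needing a word of care are the bookkeeping of the zero module in the two implications, and the observation that the property that the coherent sheaf of $M$ consists of finite-dimensional $K$-vector spaces is independent of the auxiliary choice of $G$ -- by the displayed formula it is equivalent to ${\rm dim}(M)\leq 0$, a quantity depending on $M\in\sC_{\tG}$ only.
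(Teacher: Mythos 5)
Your argument is correct, and the forward implication (zero-dimensional $\Rightarrow$ finite-dimensional members) is exactly the paper's: $d_{D_{r_m}(G)}(M_m)=0$ for all $m$, then Proposition \ref{prop-ARI}. The difference lies in the converse. The paper does not argue it at all but simply cites an external result (\cite{SchmidtAUS}, Thm.\ 8.4), whereas you derive it inside the paper's own framework, from the supremum formula ${\rm dim}(M)=\sup_{m\geq 1} d_{D_{r_m}(G)}(M_m)$ (the \cite{ST5}, Lem.\ 8.4 identity already invoked in the proof of Theorem \ref{thm-smith}) combined with the ``finite-dimensional $\Rightarrow$ canonical dimension zero'' half of Proposition \ref{prop-ARI}, which is indeed stated there as an equivalence. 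This makes your proof self-contained and symmetric, at the modest cost of the bookkeeping you already flag: the possibly vanishing members $M_m$ (harmless, since $M_m\neq 0$ forces $M_{m'}\neq 0$ for $m'\geq m$, and zero members contribute $-\infty$), the reading of ``zero-dimensional'' as ${\rm dim}(M)\leq 0$ to absorb $M=0$, and the reduction to the auxiliary uniform subgroup $G$, which is legitimate because the paper itself defines ${\rm codim}(M)=j_{D(G)}(M)$ via such a $G$ and the coherent sheaf in the statement is the system $(M_m)_m$ over $D_{r_m}(G)$. What the paper's citation buys instead is a statement proved elsewhere in greater generality (arbitrary compact locally $L$-analytic groups, no reductivity or uniformity hypotheses), so nothing needs to be rechecked about the choice of $G$; what your route buys is independence from that reference, using only ingredients already established in this article.
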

\begin{proof}
If $M$ is zero-dimensional then $d_{D_{r_m}(G)}(M_m)=0$ for all
$m\geq 1$ and the claim follows from Prop. \ref{prop-ARI}. The
other implication is \cite{SchmidtAUS}, Thm. 8.4.
\end{proof}

\end{para}

\bibliographystyle{plain}
\bibliography{mybib}

\begin{thebibliography}{10}

\bibitem{Amice2}
Y.~Amice.
\newblock Duals.
\newblock In {\em Proceedings of the {C}onference on {$p$}-adic {A}nalysis
  ({N}ijmegen, 1978)}, volume 7806 of {\em Report}, pages 1--15. Katholieke
  Univ. Nijmegen, 1978.

\bibitem{ArdakovPHD}
K~Ardakov.
\newblock Krull dimension of {I}wasawa algebras and some related topics.
\newblock {\em PhD-thesis, {C}ambridge, {UK} (2004)}, available online at:
  http://www.maths.nottingham.ac.uk/personal/pmzka1/.

\bibitem{AWCartanMap}
K.~Ardakov and S.~Wadsley.
\newblock On the {C}artan map for crossed products and {H}opf-{G}alois
  extensions.
\newblock {\em {A}lgebras and {R}epresentation {T}heory}, 13(1):33--41, 2010.

\bibitem{AW}
K.~Ardakov and S.~Wadsley.
\newblock On irreducible representations of compact $p$-adic analytic groups.
\newblock {\em {T}o appear in: {T}he {A}nnals of {M}athematics}, available
  online at: http://arxiv.org/abs/1102.2606.

\bibitem{BB81}
A.~Be{\u\i}linson and J.~Bernstein.
\newblock Localisation de {$\mathfrak{g}$}-modules.
\newblock {\em C. R. Acad. Sci. Paris S\'er. I Math.}, 292(1):15--18, 1981.

\bibitem{BeilinsonGinzburg}
Alexander Beilinson and Victor Ginzburg.
\newblock Wall-crossing functors and {$\mathcal{D}$}-modules.
\newblock {\em Represent. Theory}, 3:1--31, 1999.

\bibitem{BerthelotDI}
P.~Berthelot.
\newblock D-modules arithm\'etiques {I}. {O}p\'erateurs diff\'erentiels de
  niveau fini.
\newblock {\em {A}nn. {S}ci. {E}.{N}.{S}}, 29:185--272, 1996.

\bibitem{BerthelotOgus}
Pierre Berthelot and Arthur Ogus.
\newblock {\em Notes on crystalline cohomology}.
\newblock Princeton University Press, Princeton, N.J., 1978.

\bibitem{BMR08}
Mirkovi\'c I. Rumynin~D. Bezrukavnikov, R.
\newblock Localisation of modules for a semisimple {L}ie algebra in prime
  characteristic.
\newblock {\em Ann. of {M}ath.}, 167(3):945--991, 2008.

\bibitem{BBII}
Walter Borho and Jean-Luc Brylinski.
\newblock Differential operators on homogeneous spaces. {II}. {R}elative
  enveloping algebras.
\newblock {\em Bull. Soc. Math. France}, 117(2):167--210, 1989.

\bibitem{BGR}
S.~Bosch, U.~G{\"u}ntzer, and R.~Remmert.
\newblock {\em Non-{A}rchimedean analysis}.
\newblock Springer-Verlag, Berlin, 1984.

\bibitem{B-VAR}
N.~Bourbaki.
\newblock {\em \'{E}l\'ements de math\'ematique. {F}asc. {XXXIII}.
  {V}ari\'et\'es diff\'erentielles et analytiques. {F}ascicule de r\'esultats
  ({P}aragraphes 1 \`a 7)}.
\newblock Actualit\'es Scientifiques et Industrielles, No. 1333. Hermann,
  Paris, 1967.

\bibitem{B-L}
N.~Bourbaki.
\newblock {\em \'{E}l\'ements de math\'ematique. {F}asc. {XXXVII}. {G}roupes et
  alg\`ebres de {L}ie. {C}hap. {II/III}}.
\newblock Hermann, Paris, 1972.
\newblock Act.Sci. et Ind., No. 1349.

\bibitem{CollingwoodMcGovern}
David~H. Collingwood and William~M. McGovern.
\newblock {\em Nilpotent orbits in semisimple {L}ie algebras}.
\newblock Van Nostrand Reinhold Mathematics Series. New York, 1993.

\bibitem{DemazureInvariants}
M.~Demazure.
\newblock Invariants sym\'etriques entiers des groupes de {W}eyl et torsion.
\newblock {\em Invent. Math.}, 21:287--301, 1973.

\bibitem{DemazureGabriel}
Michel Demazure and Pierre Gabriel.
\newblock {\em Groupes alg\'ebriques. {T}ome {I}: {G}\'eom\'etrie alg\'ebrique,
  g\'en\'eralit\'es, groupes commutatifs}.
\newblock Masson \& Cie, \'Editeur, Paris, 1970.
\newblock Avec un appendice {{\i}t Corps de classes local} par Michiel
  Hazewinkel.

\bibitem{Dixmier}
Jacques Dixmier.
\newblock {\em Enveloping algebras}, volume~11 of {\em Graduate Studies in
  Mathematics}.
\newblock American Mathematical Society, Providence, RI, 1996.
\newblock Revised reprint of the 1977 translation.

\bibitem{DDMS}
J.~D. Dixon, M.~P.~F. du~Sautoy, A.~Mann, and D.~Segal.
\newblock {\em Analytic pro-{$p$} groups}, volume~61 of {\em Cambridge Studies
  in Advanced Mathematics}.
\newblock Cambridge University Press, Cambridge, second edition, 1999.

\bibitem{DospSchraen}
{G}. Dospinescu and {B}. {S}chraen.
\newblock Endomorphism algebras of admissible {$p$}-adic representations of
  {$p$}-adic {L}ie groups.
\newblock {\em To appear in: {R}epresentation {T}heory}.

\bibitem{EmertonO}
M.~Emerton.
\newblock On the interpolation of systems of eigenvalues attached to
  automorphic {H}ecke eigenforms.
\newblock {\em Invent. Math.}, 164(1):1--84, 2006.

\bibitem{EGA_I}
A.~Grothendieck.
\newblock \'{E}l\'ements de g\'eom\'etrie alg\'ebrique. {I}. {L}e langage des
  sch\'emas.
\newblock {\em Inst. Hautes \'Etudes Sci. Publ. Math.}, (4):228, 1960.

\bibitem{EGA_IV_3}
A.~Grothendieck.
\newblock \'{E}l\'ements de g\'eom\'etrie alg\'ebrique. {IV}. \'{E}tude locale
  des sch\'emas et des morphismes de sch\'emas. {III}.
\newblock {\em Inst. Hautes \'Etudes Sci. Publ. Math.}, (28):255, 1966.

\bibitem{EGA_IV_4}
A.~Grothendieck.
\newblock \'{E}l\'ements de g\'eom\'etrie alg\'ebrique. {IV}. \'{E}tude locale
  des sch\'emas et des morphismes de sch\'emas {IV}.
\newblock {\em Inst. Hautes \'Etudes Sci. Publ. Math.}, (32):361, 1967.

\bibitem{HKN}
Annette Huber, Guido Kings, and Niko Naumann.
\newblock Some complements to the {L}azard isomorphism.
\newblock {\em Compos. Math.}, 147(1):235--262, 2011.

\bibitem{Iversen}
Birger Iversen.
\newblock {\em Cohomology of sheaves}.
\newblock Universitext. Springer-Verlag, Berlin, 1986.

\bibitem{JantzenCharp}
Jens~Carsten Jantzen.
\newblock Representations of {L}ie algebras in prime characteristic.
\newblock In {\em Representation theories and algebraic geometry ({M}ontreal,
  {PQ}, 1997)}, volume 514 of {\em NATO Adv. Sci. Inst. Ser. C Math. Phys.
  Sci.}, pages 185--235. Kluwer Acad. Publ., Dordrecht, 1998.
\newblock Notes by Iain Gordon.

\bibitem{Jantzen}
Jens~Carsten Jantzen.
\newblock {\em Representations of algebraic groups}, volume 107 of {\em
  Mathematical Surveys and Monographs}.
\newblock American Mathematical Society, Providence, RI, second edition, 2003.

\bibitem{Joseph}
A.~Joseph.
\newblock Minimal realizations and spectrum generating algebras.
\newblock {\em Comm. {M}ath. {P}hys.}, 36:325--328, 1974.

\bibitem{JosephMINORBIT}
A.~Joseph.
\newblock The minimal orbit in a simple lie algebra and its associated maximal
  ideal.
\newblock {\em Ann. {S}cient. {E}c. {N}orm {S}up.}, 9:1--30, 1976.

\bibitem{Kiehl}
R.~Kiehl.
\newblock Theorem {A} und {T}heorem {B} in der nichtarchimedischen
  {F}unktionentheorie.
\newblock {\em Invent. Math.}, 2:256--273, 1967.

\bibitem{KohlhaaseI}
J.~Kohlhaase.
\newblock Invariant distributions on {$p$}-adic analytic groups.
\newblock {\em Duke Math. J.}, 137(1):19--62, 2007.

\bibitem{Lazard65}
M.~Lazard.
\newblock Groupes analytiques {$p$}-adiques.
\newblock {\em Inst. Hautes \'Etudes Sci. Publ. Math.}, (26):389--603, 1965.

\bibitem{LazardGroupesAnalytiques}
M.~Lazard.
\newblock Groupes analytiques {$p$}-adiques.
\newblock {\em Inst. Hautes \'Etudes Sci. Publ. Math.}, (26):4--219, 1965.

\bibitem{Levasseur}
Thierry Levasseur.
\newblock Some properties of noncommutative regular graded rings.
\newblock {\em Glasgow Math. J.}, 34(3):277--300, 1992.

\bibitem{LVO}
H.~Li and F.~van Oystaeyen.
\newblock {\em Zariskian filtrations}, volume~2 of {\em $K$-Monographs in
  Mathematics}.
\newblock Kluwer Academic Publishers, Dordrecht, 1996.

\bibitem{Loeffler}
D.~Loeffler.
\newblock Overconvergent algebraic automorphic forms.
\newblock {\em Proc. {L}ondon {M}ath. {S}oc. {O}xford {M}ath. {J}.},
  102(2):193--228, 2011.

\bibitem{MCR}
J.~C. McConnell and J.~C. Robson.
\newblock {\em Noncommutative {N}oetherian rings}.
\newblock Pure and Applied Mathematics (New York). John Wiley \& Sons Ltd.,
  Chichester, 1987.

\bibitem{Milicic93}
D.~Mili$\check{c}$i\'c.
\newblock Algebraic {D}-modules and representation theory of semisimple {L}ie
  groups.
\newblock In {\em Analytic Cohomology and Penrose Transform}, volume 154 of
  {\em Contemporary Mathematics}, pages 133--168. Amer. Math. Soc., Providence,
  RI, 1993.

\bibitem{Milicic93Preprint}
D.~Mili$\check{c}$i\'c.
\newblock Localization and {R}epresentation {T}heory of {R}eductive {L}ie
  groups.
\newblock {\em Preprint 1993}, available online at:
  http://www.math.utah.edu/~milicic.

\bibitem{Milne}
J.~S. Milne.
\newblock {\em \'{E}tale cohomology}, volume~33 of {\em Princeton Math.
  Series}.
\newblock Princeton Univ. Press, Princeton, N.J., 1980.

\bibitem{NootHuyghe09}
C.~Noot-Huyghe.
\newblock Un th\'eor\`eme de {B}eilinson-{B}ernstein pour les
  {$\mathcal{D}$}-modules arithm\'etiques.
\newblock {\em Bull. Soc. Math. France}, 137(2):159--183, 2009.

\bibitem{OrlikStrauchIRR}
S.~Orlik and M.~Strauch.
\newblock On the irreducibility of locally analytic principal series
  representations.
\newblock {\em Represent. Theory}, 14:713--746, 2010.

\bibitem{Passman}
D.S. Passman.
\newblock {\em Infinite crossed products}, volume 135 of {\em Pure and Applied
  Mathematics}.
\newblock Academic Press Inc., Boston, MA, 1989.

\bibitem{PSS}
D.~Patel, T.~Schmidt, and M.~Strauch.
\newblock Locally analytic representations and sheaves on the {B}ruhat-{T}its
  building.
\newblock {\em Preprint 2012}, available online at:
  http://www.math.uni-muenster.de/u/tobias.schmidt.

\bibitem{SchmidtBGG}
T.~Schmidt.
\newblock Verma modules over {$p$}-adic {A}rens-{M}ichael envelopes of
  reductive {L}ie algebras.
\newblock {\em Preprint 2010, submitted to: {J}ournal of {A}lgebra}.

\bibitem{SchmidtAUS}
T.~Schmidt.
\newblock Auslander regularity of {$p$}-adic distribution algebras.
\newblock {\em Represent. Theory}, 12:37--57, 2008.

\bibitem{SchmidtVECT}
T.~Schmidt.
\newblock Analytic vectors in continuous {$p$}-adic representations.
\newblock {\em Compositio Math.}, 145:247--270, 2009.

\bibitem{SchmidtSTAB}
T.~Schmidt.
\newblock Stable flatness of nonarchimedean hyperenveloping algebras.
\newblock {\em Journal of Algebra}, 323/3:757--765, 2010.

\bibitem{NFA}
P.~Schneider.
\newblock {\em Nonarchimedean functional analysis}.
\newblock Springer Monographs in Mathematics. Springer-Verlag, Berlin, 2002.

\bibitem{SchneiderICM}
P.~Schneider.
\newblock Continuous representation theory of {$p$}-adic {L}ie groups.
\newblock In {\em International {C}ongress of {M}athematicians. {V}ol. {II}},
  pages 1261--1282. Eur. Math. Soc., Z\"urich, 2006.

\bibitem{ST01a}
P.~Schneider and J.~Teitelbaum.
\newblock {$U({\mathfrak g})$}-finite locally analytic representations.
\newblock {\em Represent. Theory}, 5:111--128 (electronic), 2001.
\newblock With an appendix by Dipendra Prasad.

\bibitem{ST4}
P.~Schneider and J.~Teitelbaum.
\newblock Locally analytic distributions and {$p$}-adic representation theory,
  with applications to {${\rm GL}\sb 2$}.
\newblock {\em J. Amer. Math. Soc.}, 15(2):443--468 (electronic), 2002.

\bibitem{ST02c}
P.~Schneider and J.~Teitelbaum.
\newblock {$p$}-adic boundary values.
\newblock {\em Ast\'erisque}, (278):51--125, 2002.
\newblock Cohomologies $p$-adiques et applications arithm{\'e}tiques, I.

\bibitem{ST5}
P.~Schneider and J.~Teitelbaum.
\newblock Algebras of {$p$}-adic distributions and admissible representations.
\newblock {\em Invent. Math.}, 153(1):145--196, 2003.

\bibitem{ST6}
P.~Schneider and J.~Teitelbaum.
\newblock Duality for admissible locally analytic representations.
\newblock {\em Represent. Theory}, 9:297--326 (electronic), 2005.

\bibitem{Smith}
S.P. Smith.
\newblock Krull dimension of factor rings of the enveloping algebra of a
  semisimple {L}ie algebra.
\newblock {\em Math. Proc. Cambridge Philos. Soc.}, 93(3):459--466, 1983.

\end{thebibliography}

\end{document}